\title{Indirect Inference for Time Series Using the Empirical Characteristic Function and Control Variates}
\author{
Richard A. Davis
\thanks{Department of Statistics, Columbia University, 1255 Amsterdam Avenue, New York, NY 10027, USA, email: rdavis@stat.columbia.edu}
\and Thiago do R\^ego Sousa 
\thanks{Center for Mathematical Sciences, Technical University of Munich,  85748 Garching, Boltzmannstr.~3, Germany, email:  thiago.sousa@tum.de, cklu@tum.de}
\and Claudia Kl\"uppelberg\footnotemark[2]
}
\numberwithin{equation}{section}
\newtheorem{theorem}{Theorem}[section]
\newtheorem{lemma}[theorem]{Lemma}
\newtheorem{remark}[theorem]{Remark}
\newtheorem{example}[theorem]{Example}
\newtheorem{proposition}[theorem]{Proposition}
\newtheorem{definition}[theorem]{Definition}
\newtheorem*{assumptionA}{Assumptions A}
\newtheorem*{assumptionB}{Assumptions B}
\newtheorem*{assumptionC}{Assumptions C}
\newtheorem*{assumptionD}{Assumptions D}
\newtheorem{corollary}[theorem]{Corollary}
\newcommand{\bthe}{\begin{theorem}}
\newcommand{\ethe}{\end{theorem}}
\newcommand{\ble}{\begin{lemma}}
\newcommand{\ele}{\end{lemma}}
\newcommand{\bde}{\begin{definition}\rm}
\newcommand{\ede}{\Chalmos\end{definition}}
\newcommand{\bco}{\begin{corollary}}
\newcommand{\eco}{\end{corollary}}
\newcommand{\bpr}{\begin{proposition}}
\newcommand{\epr}{\end{proposition}}
\newcommand{\brem}{\begin{remark}\rm}
\newcommand{\erem}{\Chalmos\end{remark}}
\newcommand{\bproof}{\begin{proof}}
\newcommand{\eproof}{\end{proof}}
\newcommand{\bexam}{\begin{example}\rm}
\newcommand{\eexam}{\Chalmos\end{example}}
\newcommand{\beao}{\begin{eqnarray*}}
\newcommand{\eeao}{\end{eqnarray*}\noindent}
\newcommand{\beam}{\begin{eqnarray}}
\newcommand{\eeam}{\end{eqnarray}\noindent}
\newcommand{\barr}{\begin{array}}
\newcommand{\earr}{\end{array}}
\newlist{Aenumerate}{enumerate}{1}
\setlist[Aenumerate]{label=A.\arabic*}
\definecolor{lg}{gray}{0.9} % color box tables
\def\N{{\mathbb N}}
\def\C{{\mathbb C}}
\def\Z{{\mathbb Z}}
\def\E{{\mathbb E}}
\def\R{{\mathbb R}}
 \def\1{\mathds{1}}
\newcommand{\stp}{\stackrel{P}{\rightarrow}}
\newcommand{\std}{\stackrel{d}{\rightarrow}}
\newcommand{\stas}{\stackrel{\rm a.s.}{\rightarrow}}
\newcommand{\simiid}{\stackrel{\rm iid}{\sim}}
\newcommand{\eqd}{\stackrel{\mathrm{d}}{=}}
\newcommand{\eqas}{\stackrel{\mathrm{a.s.}}{=}}
\newcommand{\nto}{{n\to\infty}}
\newcommand{\deltato}{{\delta\to\infty}}
\newcommand{\vp}{\varphi}
\newcommand{\eps}{\varepsilon}
\newcommand{\la}{\langle}
\newcommand{\ra}{\rangle}
\DeclareMathOperator*{\argminA}{arg\,min} % limits underneath
\newcommand{\var}{\mathbb{V}{\rm ar}}
\newcommand{\cov}{\mathbb{C}{\rm ov}}
\newcommand{\bsX}{{\bm{X}}}
\newcommand{\tildeX}{\tilde{X}}
\newcommand{\bstildeX}{\tilde{\bm{X}}}
\newcommand{\diff}{{\rm d}}
\newcommand{\gradtheta}{\nabla_{\theta}}
\newcommand{\intrp}{\int_{\R^p}}
\newcommand{\suptheta}{\sup_{\theta \in \Theta} }
\newcommand{\thehat}{\hat{\theta}_{n,H}}
\newcommand{\thehatcv}{\hat{\theta}_{n,H,k}^{\text{(cv)}}}
\newcommand{\kdelta}{K_{\delta}}
\newcommand{\diffthetai}{\frac{\partial}{\partial{\theta^{(i)}}}}
\newcommand{\diffthetak}{\frac{\partial}{\partial{\theta^{(k)}}}}
 \newcommand{\diffthetaki}{\frac{\partial}{\partial{\theta^{(k)}}\partial{\theta^{(i)}}}}
\newcommand{\ov}{\overline}
\newcommand{\wh}{\widehat}
\newcommand{\Chalmos}{\quad\hfill\mbox{$\Box$}}  % Beweisendezeichen
\newcommand{\ts}[1]{{\color{magenta} #1}}
\begin{document}
% generates the title
\maketitle
% insert the table of contents
%\tableofcontents

\begin{abstract}
We estimate the parameter of a stationary time series process by minimizing the integrated weighted mean squared error between the empirical and simulated characteristic function, when the true characteristic functions cannot be explicitly computed. 
Motivated by Indirect Inference, we use a Monte Carlo approximation of the characteristic function based on iid simulated blocks. 
As a classical variance reduction technique, we propose the use of control variates for reducing the variance of this Monte Carlo approximation. 
These two approximations yield two new estimators
that are applicable to a large class of time series processes. 
We show consistency and asymptotic normality of the parameter estimators under strong mixing, moment conditions, and smoothness of the simulated blocks with respect to its parameter. 
In a simulation study we show the good performance of these new simulation based estimators, and the superiority of the control variates based estimator for Poisson driven time series of counts. 

%\CK{We mean superiority over the empirical estimator without control variates.}
\end{abstract}

\noindent
{\em AMS 2010 Subject Classifications:}  %primary:\,
62F12,     %Asymptotic properties of estimators
62G20,  %Asymptotic properties: non-parametrics
%58C07\,,   % Continuity properties of mappings
%62Mxx\,,   % Inference from stochastic processes
62M10,   %Time series, auto-correlation, regression, etc.
65C05,  %Monte Carlo methods
91G70\,,\\  	%Statistical methods, econometrics
%secondary: \,
%00A72 \,,   % General methods of simulation
%37M10,         %Time series analysis
%62P05\,.  	%Applications to actuarial sciences and financial mathematics
\noindent
{\em Keywords:}
Asymptotic normality, Characteristic function, Control variates, Indirect Inference estimation, Time series of counts, SLLN, Variance reduction

%%%%%%%%%%%%%%%%%%%%%%%%%
%
% INTRODUCTION
%
%%%%%%%%%%%%%%%%%%%%%%%%%

\section{Introduction}%\label{s1}

Let $(X_j)_{j \in \Z}$ be a stationary time series, whose distribution depends on $\theta \in \Theta \subset \R^q$ for some $q \in \N$. 
Denote by $\theta_0\in \Theta$ the true parameter, which we want to estimate from observations $X_1,\dots,X_{T}$ of the time series.
Maximum likelihood estimation (MLE) has been extensively used for parameter estimation, since under weak regularity conditions it is known to be asymptotically efficient. {For many models, however, MLE is not always feasible to carry out, due to a likelihood that may be intractable to compute, or maximization of the likelihood is difficult, {or because the likelihood function is unbounded on $\Theta$.}} To overcome such problems, alternative methods have been developed, for instance,  the generalized method of moments (GMM)  in \citet{Hansen82}, the quasi-maximum likelihood estimation (QMLE) in \citet{White82}, and composite likelihood methods in \citet{lindsay1988composite}.

In a similar vein, \cite{Feuerverger1990} 
%and \cite{knightSatchell1996} we did not find the paper, only a reference to it
proposed an estimator based on matching the empirical characteristic function (chf) computed from blocks of the observed time series and the true chf. 
More specifically, given a fixed $p \in \N$, the observed blocks of $X_1,\dots,X_{T}$ are
\begin{equation}\label{eq:blockXj}
\bsX_j = (X_j,\dots,X_{j+p-1}), \quad j = 1,\dots,n,
\end{equation}
where $n = T-p+1$. 
In that paper, a finite set of points in $\R^p$ needs to be chosen as arguments for {which} the true and the empirical chf {are compared}. 
However, the practical choice of this set depends on the problem at hand and the asymptotic results derived in \citet{Feuerverger1990} do not offer practical guidance {for choosing these points}.
 To overcome this {limitation} \cite{yu1998empirical} and \citet{Knight02Corr} {considered a integrated weighted squared distance} 
between the empirical and the true {chfs}.
 
%We shall follow \cite{Knight02Corr} who proposed an estimator that minimizes the integral distance between the empirical characteristic function (chf) computed from blocks of the observed time series and the true chf.

This method has been used in a variety  of applications; an interesting review paper, \cite{Yu} contains a wealth of examples and references. More recent publications, where the method has been successfully applied to discrete-time models include  \citet*{knight2002SVmodelECF}, \citet{meintanis2012inference}, \citet{kotchoni2012applications}, \citet*{milovanovic2014application}, \citet{francq2016fourier}, and \citet{ndongo2016estimation}. 
%, for instance, to estimate the parameters of a stable ARMA model (\cite{Knight02Corr}), 
 % ..... or the parameters of a square root interest rate model (\cite{Carrasco07}).
 %{I commented the reference to the square root interest rate model in Carrasco since it is for a continuous time model}\\
%\q{who else here? I included the time series references above. and who in the simulation part? In the reference research I did I just found Forneron.}\\
The method also applies to continuous-time processes after discretization and has been used prominently for L\'evy-driven models. The book \cite{LM4} {provides additional} insight and references in this field.

The principal goal of this paper is to extend the ideas of these papers to a more general setting. For example, we do not assume the idealized situation for which the chf has an explicit expression as a function of $\theta \in \Theta$. We propose two new estimators of $\theta$, which are based on replacing the true chf with estimates that are constructed from a functional approximation of the chf constructed from simulated sample paths of $(X_j(\theta))_{j \in \Z}$.

While much attention has been given to the choice of the {integrated} distance used when computing such estimators, which under some regularity conditions can achieve the Cram\'{e}r-Rao efficiency bound (see eq. (2.3) of \citet{Knight02Corr} and Proposition~4.2 of \citet{Carrasco07}), the focus of our paper is on the practical and theoretical aspects that emerge when it is required to approximate the theoretical chf for parameter estimation. For more details on the search for efficient estimators we refer to \cite{Carrasco07, carrasco2014asymptotic, carrasco2017efficient}.

Our first estimator is computed from a simple Monte Carlo approximation to replace the true, but unknown chf.
This is similar to the simulated method of moments of \citet{Mcfadden89} and of the indirect inference method (\cite{Smith93II} and \citet{Gourieroux93}). 
In particular, indirect inference has been successfully applied in a variety of situations: parameter estimation of continuous time models with stochastic volatility (\citet{Bianchi96}, \citet{Jiang98}, \citet{Raknerud12}, \citet{Laurini13} and \citet*{Wahlberg15}), robust estimation (\citet{deLuna01} and \citet{Fasen18II}), and finite sample bias reduction (\citet*{Gourieroux00,Gourieroux10} and \citet*{Thiago1}).

More precisely, for many different $\theta \in \Theta$, we simulate an iid sample of blocks denoted by
\begin{equation}\label{eq:XjhblcksINTRO}
\bstildeX_j(\theta) = (\tildeX_1^{(j)}(\theta),\dots,\tildeX_p^{(j)}(\theta)), \quad j = 1,\dots,H,
\end{equation}
for $H \in \N$, and define a {\em simulation based parameter estimator}, which minimizes the integrated weighted mean squared error, which is the integrated distance we use, between the empirical chf computed from the blocks \eqref{eq:XjhblcksINTRO} of the observed time series and its simulated version computed from a large number of simulated paths of the time series. 

This is in contrast to the simulation based estimator defined in Section~5.2 of \citet{Carrasco07}, which is computed from one long time series path instead of the iid sample of blocks in \eqref{eq:XjhblcksINTRO} (a similar method has been applied by \citet{Forneron18} to estimate the structural parameters and the distribution of shocks in dynamic models).
Since we compute the Monte Carlo approximation of the chf from independent blocks, it should have smaller variance than the corresponding one for dependent blocks.  
{Our method} gives a chf approximation which yields strongly consistent and asymptotically normal parameter estimators.
We also report their small sample properties for different models.

%By the same method in \citet{Carrasco07}, \citet{Forneron18} estimated the structural parameters and the distribution of shocks in dynamic models.

Furthermore, as the Monte Carlo approximation of the chf is computed from iid blocks of a time series, control variates techniques (see \cite{glynn2002CV} and \cite{casella2013monte}) provide an even more accurate approximation for the chf. 
Control variates techniques are classical variance reduction methods in simulation. 
The {idea is to use} a set of control variates, which are correlated with the chf. The method then approximates the joint covariance matrix of the control variates and the chf, and uses it to construct a new Monte Carlo approximation of the chf.
{We choose the first two terms in the {Taylor} expansion of the complex exponential $e^{i \la t,\bsX_1(\theta)\ra}$}, $\la t,\bsX_1(\theta)\ra$ and $\la t,\bsX_1(\theta)\ra^2$ for $\theta \in \Theta$ as control variates, where $\la \cdot, \cdot \ra$ denotes the usual Euclidean {inner} product in $\R^d$. {This requires knowing the mean and covariance matrix of $\bsX_1(\theta)$ for $\theta \in \Theta$}.

{In assessing} the performance of both the Monte Carlo approximation and the control variates approximation of the chf, {two trends emerge}. 
First, both the Monte Carlo and the control variates approximations work better for small values of the argument. 
Second, the control variates approximation performs much better than the Monte Carlo approximation, in particular, for small values of the argument.
As a consequence, we propose a {\em control variates based parameter estimator} whose {integrated mean squared error} distinguishes between small and large values of the argument. 

Under regularity conditions we prove strong consistency of {the proposed} parameter estimators and asymptotic normality of the simulation based parameter estimator. We find that the simulation based parameter estimator is asymptotically normal with asymptotic covariance matrix equal to the one of the oracle estimator as derived in \cite{Knight02Corr}. From this we conclude that there cannot be any improvement in the limit law for the asymptotic normality of the control variates based estimator. However, we prove that it is computed from a better approximation of the chf. Thus, the control variates estimator improves the finite sample performance compared to the simulation based parameter estimator.

It is assumed throughout that $(X_j)_{j \in \mathbb{Z}}$ is a stationary time series. This ensures that the blocks of random variables in (1.1) are stationary, from which we obtain convergence of the empirical chf to the joint chf. Now in some restricted cases, our method can be adapted to special types of nonstationarity. For example, if $(X_j)_{j \in \mathbb{Z}}$ is nonstationary, but the differenced process $\nabla X_j = X_j - X_{j-1}$ is stationary, then our methodology can be applied directly to $\nabla X_j$. Similarly, if $X_j = Y_j + \mu_j$, where $Y_j$ is stationary and $\mu_j$ is a mean function that can be estimated consistently say by $\hat{\mu}_j$, then the methodology can be applied to $X_j - \hat{\mu}_j$. We do not pursue this line of investigation here.

The finite sample performance of the estimators are investigated for two important models. 
We begin with a stationary Gaussian ARFIMA model, whose chf is explicitly known so that we can use the oracle estimator and compare its performance with the simulated based estimator. 
Their performance is comparable and also very close to the MLE, so in this model there is no need to use control variates. {The} second example is a nonlinear model for time series of counts, which has been proposed originally in \citet{Zeger88} and applied, for instance, for modeling disease counts (see also \citet{campbell1994time}, \citet{chan1995monte} and \citet*{davis1999modeling}).  

{In the second example,} the oracle estimator does not apply, since the chf of a Poisson-AR process cannot be computed in closed form. 
For this model and different parameter sets, both the simulation based and the control variates based estimators perform satisfactorily, and the control variates based estimator improves the performance of the simulation based estimator considerably. 
When compared with the composite pairwise likelihood estimator in \citet{Davis11PL}, the control variates based estimator has comparable or even smaller bias. 

Our paper is organized as follows. In Section~\ref{s2} we present the oracle estimator, and the estimators computed from a Monte Carlo approximation and from a control variates approximation of the chf in detail. Here we also motivate the choice of the control variates used. 
The asymptotic properties of the two new estimators are {established} in Section~\ref{se:asymp}. 
As all estimators are computed from true or approximated chf's we assess their {performance} in Section~\ref{se:mc_vs_cv}, first for a Gaussian AR(1) process and then for the Poisson-AR process. Practical aspects of calculating the weighted least squares function are discussed in 
Section~\ref{s5}, as well as the estimation results for finite samples. In Section~\ref{s52} we compare the oracle estimator, the simulation based parameter estimator and the MLE for a Gaussian ARFIMA model, whereas in Section~\ref{s51} we compare the simulation based parameter estimator and the control variates based estimator for the Poisson-AR process. 
The proofs of the main results in Section~\ref{se:asymp}, of Lemma~1 of Section~\ref{s5}, and the Tables discussed in Sections~\ref{s52} and ~\ref{s51} are provided in the Appendix.

\section{Parameter estimation based on the empirical characteristic function}\label{s2}

Throughout we use the following notation. 
%We write $|\cdot|$ for the $\ell^1$-norm in $\R^d$ for $d\in\N$ and also for the corresponding operator norm for matrices in $\R^{d \times d}$. 
For $z\in\C$ we use the $L^2$-norm: $|z|=\sqrt{z\,\ov z}$, where $\ov z$ is the complex conjugate of $z$.
For $x\in\R^d$ and $d\in\N$ we denote by $|x|$ the $L^2$-norm, but recall that in $\R^d$ all norms are equivalent.
For $z \in \C$ the symbols $\Re(z)$ and $\Im(z)$ denote its real and imaginary part. 
For a function $f: \R^q\to \R^p$ its Jacobi matrix is given by $\gradtheta f(\theta) = \frac{\partial f(\theta)}{\partial \theta^T} \in \R^{p \times q}$ and $\gradtheta^2 f(\theta) = \frac{\partial \text{vec}(\gradtheta f(\theta)) }{\partial \theta^T} \in \R^{pq \times q}$.

\subsection{The oracle estimator}\label{s21}

Let $(X_j(\theta))_{j \in \Z}$ be a stationary time series process, whose distribution depends on $\theta \in \Theta \subset \R^q$ for some $q \in \N$. 
Denote by $\theta_0\in \Theta$ the true parameter, which we want to estimate, and suppose that we observe $X_1,\dots,X_{T}$. 
Given a fixed $p \in \N$, define for $\theta\in\Theta$ the $p$-dimensional blocks
\begin{equation}\label{eq:blocksXj}
\bsX_j (\theta)=  (X_j(\theta),\dots,X_{j+p-1}(\theta)), \quad j = 1,\dots,n,
\end{equation}
 where $n = T-p+1$. {For $j = 1,\dots,n,$ the observed blocks correspond to $\bsX_j = (X_j,\dots,X_{j+p-1})$}, which can be used to calculate the {\em empirical characteristic function (chf)},  defined as 
\begin{equation}\label{eq:dfepcf}
 \varphi_n(t) =  \frac{1}{n} \sum_{j=1}^n e^{i \la t,\bsX_j \ra }, \quad t \in \R^p.
\end{equation}
Under mild conditions such as ergodicity, $\varphi_n(t)$ converges a.s. pointwise to the true chf $\vp(t) = \E e^{i\la t ,\bsX_1 \ra}$ for all $t \in \R^p$. We assume that $p$ is chosen in such a way that $\vp(\cdot)$ uniquely identifies the parameter of interest $\theta$. The idea of estimating $\theta_0$ from a single time series observation by matching the empirical chf of blocks of the observed time series and the true one has been proposed in \cite{yu1998empirical} and \citet{Knight02Corr}, and we use the one in \cite{Knight02Corr}, where the {\em oracle estimator} of $\theta_0$ is defined as
\begin{equation}\label{eq:defOra}
\hat{\theta}_n = \text{argmin}_{\theta \in \Theta} Q_n(\theta),
\end{equation}
where
\begin{equation}\label{eq:defQast}
 Q_n(\theta) = \int_{\mathbb{R}^p} |  \vp_n(t) -  \vp(t,\theta)|^2 w(t) \diff t, \quad \theta \in \Theta,
\end{equation}
with suitable  weight function $w$ such that the integral is well-defined, and chf
\begin{equation}\label{eq:cf}
\vp(t,\theta) = \E e^{i\la t, \bsX_1(\theta) \ra}, \quad t \in \R^p.
\end{equation}

In an ideal situation, $\vp(\cdot,\theta)$ has an explicit expression, which is known for all $\theta\in\Theta$.

\subsection{Estimator based on a Monte Carlo approximation of \texorpdfstring{$\vp(\cdot,\theta)$}{Lg}}\label{s22}

Unfortunately, a closed form expression of the chf $\vp(\cdot,\theta)$ is for many time series processes not available. 
However, it can be approximated by a Monte Carlo simulation, and an idea borrowed from the simulated method of moments (\citet{Mcfadden89}, see also \cite{Smith93II} and \citet*{Gourieroux93} for a similar idea in the context of indirect inference) is to replace $\vp(\cdot,\theta)$ by its functional approximation constructed from simulated sample paths of $(X_j(\theta))_{j \in \Z}$. 
For many different $\theta \in \Theta$, we simulate, {independent of {the observed time series}, % $\bsX_1,\dots,\bsX_n$} 
an iid sample of the blocks in \eqref{eq:blocksXj} denoted by
\begin{equation}\label{eq:Xjhblcks}
\bstildeX_j(\theta) = (\tildeX_1^{(j)}(\theta),\dots,\tildeX_p^{(j)}(\theta)), \quad j = 1,\dots,H,
\end{equation}
for $H \in \N$, and define the {\em Monte Carlo approximation} of $\vp(\cdot,\theta)$ based on these simulations as
\begin{equation}\label{eq:ecfsim}
 \vp_H(t,\theta) =  \frac{1}{H} \sum_{j=1}^H e^{i \la t,\bstildeX_j(\theta) \ra } , \quad t \in \R^p.
\end{equation}
If we replace $\vp(\cdot,\theta)$ in \eqref{eq:defQast} by $\vp_H(\cdot,\theta)$, we obtain the {\em simulation based parameter estimator}
\begin{equation}\label{eq:defThe}
\thehat =  \argminA_{\theta \in \Theta} Q_{n,H}(\theta),
\end{equation}
where
\begin{equation}\label{eq:defQ}
 Q_{n,H}(\theta) = \int_{\mathbb{R}^p} |  \vp_n(t) -   \vp_H(t,\theta)|^2 w(t) \diff t,
\end{equation}
with suitable  weight function $w$ such that the integral is well-defined.

\begin{remark}
An alternative {approximation} to \eqref{eq:ecfsim} of the chf is based on generating one long time series path and use the empirical chf of the consecutive blocks of $p$-dimensional random variables constructed as in \eqref{eq:blocksXj} (see \citet{Carrasco07}). While being unbiased, the {approximation} will generally have larger variance than the {approximation}  \eqref{eq:ecfsim}. {Nevertheless,  when it is expensive to generate realizations even of dimension $p$, for instance, when a long burn-in time is required to achieve stationarity, it may be computationally more efficient to generate one long time series. While we do not pursue this approach here, the technical aspects of working with one long time series are not much different than the estimate based on independent replicates as in \eqref{eq:ecfsim}, but might require a much larger sample
size than desired to control the variance of the estimate.  This is especially true for long-memory time series.}
\end{remark}

Since $\vp_{H}(\cdot,\theta)$ is based on $H$ iid time series blocks, we can reduce its variance further using control variates to produce an even more accurate approximation for the chf.
This will result in an improved version of $\hat{\theta}_{n,H}$.

\subsection{Estimator based on a control variates approximation of \texorpdfstring{$\vp(\cdot,\theta)$}{Lg}}

The estimator $\hat{\theta}_{n,H}$ in \eqref{eq:defThe} requires only that the stationary time series process can be simulated, and is therefore easily applicable to a large class of models. 
When computing $Q_{n,H}(\theta)$ of \eqref{eq:defQ}, it is very important that the error
\begin{equation}\label{eq:errort}
\xi_{H}(t,\theta) = |\vp_{H}(t,\theta) - \vp(t,\theta)|, \quad t \in \R^p, \theta \in \Theta,
\end{equation}
in approximating the true chf is small, since it propagates to $\hat{\theta}_{n,H}$. In order to reduce the variance of the empirical chf $\vp_{H}(\cdot,\theta)$, we use the method of control variates, {\ts{an} often used} variance reduction technique in the context of Monte Carlo integration (\cite{glynn2002CV}, \citet*{oates2017CVgrowing}, \citet{Segers18CV}).

We construct a control variates approximation of $\vp(\cdot,\theta)$ from the iid sample $\bstildeX_j(\theta)$, $j = 1,\dots,H$, as in \eqref{eq:Xjhblcks}. We also require explicit expressions for the moments $\E \la t,\bsX_1(\theta)\ra^\nu$ for $\nu = 1,2$ and $\theta \in \Theta$.

 Recall that $\bstildeX_1(\theta) \eqd \bsX_1(\theta)$ for all $\theta\in\Theta$, so that both random variables have the same moments. %, and we  write $\E\bsX_1(\theta)$ for both. 
 As in \citet{Segers18CV}, we denote by $P_\theta$ the distribution of the block $\bsX_1(\theta)$ and by $P_{H,\theta}$ its empirical version. For example, if $f_t(x) = e^{i \la t,x \ra }$ for $t,x \in \R^p$, we want to provide a good approximation for {$\vp(t,\theta) = \E f_t(\bsX_1(\theta)) =: P_\theta(f_t)$ for $\theta \in \Theta$.}
To apply the control variates technique, we need control functions, which are correlated with $f_t(\bsX_1(\theta))$ and whose expectations are known. 
In the time series context, it is often that we know the first and second order structure of the process in closed form. Even for complicated models, e.g., models defined in terms of stochastic integrals (see e.g. \cite{brockwell2001levy,Kluppelberg04,brockwell2006continuous,Stelzer10}) these expressions are available. The first and second order of $\bsX_1(\theta)$ appear in the Taylor series of $f_t(\bsX_1(\theta))$ and therefore they are natural choices of control functions. We also remark that if the time series process also allows for the computation of additional moments expressions in closed form, which are correlated with $f_t(\bsX_1(\theta))$, then we encourage using them as control functions while approximating the chf. We describe now the construction of the control variates approximation in detail.

We use the first two terms in the Taylor series of the complex function $f_t(x)$, which suggests the vector of control functions $h_{t,\theta} = (h_{1,t,\theta},h_{2,t,\theta})^T$, where for $\nu = 1,2$,
\begin{equation*}%\label{eq:defhnu}
h_{\nu,t,\theta}(x) = \la t,x \ra^\nu - \E \la t,\bsX_1(\theta) \ra ^\nu, \quad t \in \R^p,
\end{equation*}
so that $P_\theta(h_{t,\theta}) = 0$, the zero vector in $\R^2$. 
The Monte Carlo approximation of $\vp(\cdot,\theta)$ based on the iid sample $\bstildeX_j(\theta)$, $j = 1,\dots,H$, is then 
\beam\label{eq:chfH}
P_{H,\theta}(f_t) = \frac1H \sum_{j=1}^H f_t(\bstildeX_j(\theta)) =  \frac1H \sum_{j=1}^H e^{i \la t, \bstildeX_j(\theta)\ra } =\vp_H(t,\theta).
\eeam
Since $\E P_{H,\theta}(f_t) = \E f_t(\bsX_1(\theta))$, the Monte Carlo approximation $\vp_H(t,\theta)$ is unbiased and has variance 
\begin{equation}\label{eq:defst2}
\var [ P_{H,\theta}(f_t)] = H^{-1}\sigma_\theta^2(f_t)\quad\mbox{with}\quad \sigma_{\theta}^2(f_t) =P_\theta (\{f_t - P_\theta(f_t)\}^2).
\end{equation}
%, where $\sigma_{\theta}^2(f_t) = P_\theta (\{f_t - P_\theta(f_t)\}^2)$. 
Then for every vector $\beta \in \C^2$, we have that $P_{H,\theta}(f_t) - \beta^T P_{H,\theta}(h_{t,\theta})$ is also an unbiased estimator of $\vp(t,\theta)$. Since $\bstildeX_j(\theta)$, $j = 1,\dots,H$, is an independent sample, {$\var[P_{H,\theta}(f_t) - \beta^T P_{H,\theta}(h_{t,\theta})] = H^{-1}\sigma_{\theta}^2(f_t - \beta^T h_{t,\theta})$}
and, if we differentiate the map $\beta \mapsto \sigma_{\theta}^2(f_t - \beta^T h_{t,\theta})$ with respect to $\beta$ and set it equal to zero, we obtain (cf. Approach~1 in \citet{glynn2002CV}) the theoretical optimum
\begin{equation}\label{eq:dfbeopt}
\beta^{(\text{opt})}_{\theta,f_{t}}(h_{t,\theta}) = \{P_\theta(h_{t,\theta}h_{t,\theta}^T)\}^{-1} P_\theta(h_{t,\theta}f_t),
\end{equation}
provided the inverse exists.
In this case, the estimator
\begin{equation}\label{eq:CVoracle}
\vp^{\text{(cvopt)}}_H(t,\theta) = P_{H,\theta}(f_t) - (\beta^{(\text{opt})}_{\theta,f_{t}}(h_{t,\theta}))^T P_{H,\theta}(h_{t,\theta})
\end{equation}
has minimal asymptotic variance.
In order to investigate the existence of the above inverse note that
for each fixed $t \in \R^p$ and $\theta \in \Theta$, {the determinant of $P_\theta(h_{t,\theta}h_{t,\theta}^T)$ is}
\begin{equation*}
\var[\la t,\bstildeX_1(\theta) \ra] \var[\la t,\bstildeX_1(\theta)\ra^2] - \{\cov[\la t,\bstildeX_1(\theta) \ra, \la t,\bstildeX_1(\theta)\ra^2]\}^2.
\end{equation*}
Since by the Cauchy-Schwarz inequality,
$$
\{\cov[\la t,\bstildeX_1(\theta) \ra, \la t,\bstildeX_1(\theta)\ra^2]\}^2 \leq \var[\la t,\bstildeX_1(\theta) \ra]\var[\la t,\bstildeX_1(\theta)\ra^2],
$$
it follows (see e.g. \citet{Klenke13Prob}, Theorem~5.8) that
\begin{equation}\label{eq:abcas0}
    \det(P_\theta(h_{t,\theta}h_{t,\theta}^T) = 0 \iff a \la t,\bstildeX_1(\theta)\ra + b \la t,\bstildeX_1(\theta)\ra^2 + c \eqas 0,
\end{equation}
for some $a,b,c \in \R$ with $|a|+|b|+|c| > 0$. 
%To give an example, if $(X_j(\theta))_{j \in \Z}$ is a Gaussian Process, $\la t,\bstildeX_1(\theta)\ra$ is Gaussian and $\la t,\bstildeX_1(\theta)\ra^2 \geq 0$. Since a.s. equality implies equality in distribution, there are no constants $a,b,c \in \R$ with $|a|+|b|+|c| > 0$ such that the right-hand side of \eqref{eq:abcas0} cannot hold. 
As the scalar product is random, universal coefficients to satisfy the right-hand side of \eqref{eq:abcas0} exist only in degenerate cases, which we do not consider.
%Therefore, we assume that, for each fixed $t \in \R^p\setminus \{0\}$ and $\theta \in \Theta$, the time series process $(X_j(\theta))_{j \in \Z}$ does not satisfy the left-hand side of \eqref{eq:abcas0}. 

Since $\beta^{(\text{opt})}_{\theta,f_{t}}(h_{t,\theta})$ is unknown, it needs to be estimated (e.g. by one of the methods in \cite{glynn2002CV}, and we use the one described in eqs. (6) and (7) in \citet{Segers18CV}):
\begin{equation}\label{eq:pnthbest}
\begin{split}
\hat{\beta}_{H,\theta,f_t}(h_{t,\theta}) & = \{ P_{H,\theta}(h_{t,\theta} h_{t,\theta}^T) - P_{H,\theta}(h_{t,\theta})P_{H,\theta}(h_{t,\theta}^T) \}^{-1} \times \\
&  \quad\quad \{ P_{H,\theta}(h_{t,\theta} f_t) - P_{H,\theta}(h_{t,\theta}) P_{H,\theta}(f_t) \}.
\end{split}
\end{equation}
For the iid sample $\tilde\bsX_j(\theta), j = 1,\dots,H$, as in \eqref{eq:Xjhblcks} we obtain the {\em control variates approximation} of $\vp(\cdot,\theta)$ {given by}
\begin{equation}\label{eq:cv_cf}
 \vp^{\text{(cv)}}_H(t,\theta) = P_{H,\theta}(f_t) - \kappa_{H}(t,\theta),\quad t\in\R^p,
 \end{equation}
where % for  estimated $\beta$, 
\begin{equation}\label{eq:cvc}
\kappa_{H}(t,\theta) = (\hat{\beta}_{H,\theta,f_t}(h_{t,\theta}))^T P_{H,\theta}(h_{t,\theta}).
\end{equation}
Recall from \eqref{eq:chfH} that $P_{H,\theta}(f_t)=\vp_{H}(t,\theta)$, so we could simply replace $\vp_{H}(t,\theta)$ in \eqref{eq:defQ} by $\vp^{\text{(cv)}}_H(t,\theta)$ as given in \eqref{eq:cv_cf}. 
However, as we shall see in Section~\ref{se:mc_vs_cv}, the control variates approximation $\vp^{\text{(cv)}}_H(t,\theta)$ provides superior approximations of $\vp(t,\theta)$ only for values of $t$, for which $\var(\la t,\bstildeX_1(\theta) \ra)$ is small. Thus, we replace $\vp_{H}(t,\theta)$ in \eqref{eq:defQ} by a combination of $\vp_{H}(t,\theta)$ and $\vp^{\text{(cv)}}_H(t,\theta)$. 
More precisely, we propose the following {\em control variates based estimator}:
\begin{equation}\label{eq:thecv}
\hat{\theta}^{\text{(cv)}}_{n,H,k} = \text{argmin}_{\theta \in \Theta} Q^{\text{(cv)}}_{n,H,k}(\theta),
\end{equation}
where for appropriate $k > 0$,
\beam
& &\quad Q^{\text{(cv)}}_{n,H,k}(\theta) = \label{eq:Qcv}\\
& & \int_{\mathbb{R}^p} \bigg| \vp_n(t) - \bigg( \vp^{\text{(cv)}}_H(t,\theta) 1_{\{\wh{\var}(\la t,\bsX_1 \ra) < k\}} + \vp_{H}(t,\theta) 1_{\{\wh{\var}(\la t,\bsX_1 \ra) \geq k\}} \bigg) \bigg|^2  \bar{w}(t) \text{d} t, \nonumber
\eeam
$\bar{w}(t) = \frac{w(t)}{\wh{\var}(\la t,\bsX_1 \ra) }$, with suitable weight function $w$ such that the integral is well-defined.

{It is worth mentioning that, for a fixed weight function $w(\cdot)$, the weight function $\bar{w}(\cdot)$ can always be computed since $\wh{\var}(\la t,\bsX_1 \ra)$ depends only on the time series data. The downside of using the control variates based estimator \eqref{eq:thecv} is that one needs to resort to numerical integration. However, the procedure is feasible for moderate dimension $p$. As illustrated in the Poisson-AR example of Section~\ref{se:parm_cf}, the control variates based estimator has improved the performance over the simulation based estimator \eqref{eq:defThe} considerably.} 

Note that {$\wh{\var}(\la t,\bsX_1 \ra) = t^T \hat{\Gamma}_p t$} where $\hat{\Gamma}_p = (\hat{\gamma}_p (i-j))_{i,j=1}^p$ with
\begin{equation}\label{eq:gamma}
\hat{\gamma}_p (h) =  \frac{1}{n-h}\sum_{j=1}^{n-h} (X_j - \hat{\mu}_n)(X_{j+h} - \hat{\mu}_n), \quad h = 1,\dots,p,
\end{equation}
and $\hat{\mu}_n = \frac{1}{n} \sum_{j=1}^n X_j$. The choice of the indicator function $1_{\{\wh{\var}(\la t,\bsX_1 \ra) < k\}}$ is justified by the fact that, when estimating the parameter $\theta_0$, we focus on approximations of $\vp(t,\theta)$ for $\theta$ close to $\theta_0$.

\section{Asymptotic behavior of the parameter estimators}\label{se:asymp}

Before performing the parameter estimation we need to make sure that the parameters are identifiable from the model.

%In case the parameters are already identifiable from the $p$-dimensional vector $\bsX_1(\cdot)$ or from its second order structure, we describe the appropriate identifiability conditions.
%
%\subsection{Identifiability}
%
%We present two identification conditions: I.1 and I.2. A natural identification condition for parameters \ts{of} the chf is as follows:
%
%\begin{itemize}
%\item[I.1] The chf $\vp(\cdot,\theta)$ as defined in \eqref{eq:cf} satisfies
%$$\vp(t,\theta_1) = \vp(t,\theta_2) \quad \text{for a.e. } t \in \R^p  \iff \theta_1 = \theta_2.$$
%\end{itemize}
%%If the $p$-dimensional vector $\bsX_1(\theta)$ has a density then we can use
%%\begin{itemize}
%%\item[I.2]  The density $f_{\theta}$ of $\bsX_1(\theta)$ satisfies 
%%$$f_{\theta_1}(x) = f_{\theta_2}(x) \quad \text{for a.e. } x \in \R^p  \iff \theta_1 = \theta_2.$$
%%\end{itemize}
%If the parameters are already identifiable from the second order structure, then the following identifiability condition is applicable:
%\begin{itemize}
%\item[I.2]
%there exists an integer $p$ such that
%$$ \mu_{\theta_1} =  \mu_{\theta_2} \text{ and } \gamma_{\theta_1}(h) = \gamma_{\theta_2}(h), \quad \text{for all } h = 0,1,\dots,p \iff \theta_1 = \theta_2,$$
%where
%\begin{equation}
%\begin{split}
%    \mu_{\theta} & = \E X_1(\theta) \quad \text{and} \quad \gamma_{\theta}(h) = \E[ (X_1(\theta) - \mu_{\theta}) (X_{1+h}(\theta) - \mu_{\theta})], \quad h \in \N.
%\end{split}
%\end{equation}
% \end{itemize}
 
 %For the estimators we propose, we require simply that the chf uniquely identifies the parameter of interest. 

{In the following we assume that the model parameters are identifiable from the chf.
{In our examples, the dimension $p$ must be at least 2.
% in order to capture the influence of the dependence parameters in the time series model. 
For a specific choice of $p$, the minimum in \eqref{eq:thecv} may not be unique giving an identifiability problem of the estimated model. This may be remedied by increasing the dimension $p$.}}

%\rf{REFEREE: The authors assume identification (on p.17) but they do not discuss how to check for the identifiability of $\theta_0$ in the context of simulated CF. Further, the number of lags $p$ is strictly connected to the identification of $\theta_0$. How is $p$ determined? No discussion is provided.
%
%OUR ANSWER: We include now the Identifiability condition 3.1, which shows the connection with the number of lags $p$ and is appropriate for the context of time series estimation using CF. In particular, condition I.2 is useful for the context of simulated CF.} The properties of the {iid sample of the blocks $({\bstildeX}_j(\theta))_{j \in \N}$ as a function of $\theta$ will play a crucial role for the properties of the estimators $\thehat$ and $\thehatcv$ from \eqref{eq:defThe} and \eqref{eq:thecv}, respectively. 
%\q{Erase this sentence?In the following we collect all assumptions needed for proving strong consistency of $\thehat$ and $\thehatcv$ and \rd{for} asymptotic normality of $\thehat$.}
%We find that $\thehat$ is asymptotically normal with asymptotic covariance matrix equal to the one of the oracle estimator $\hat{\theta}_n$ from \eqref{eq:defOra}. 
%Therefore, there cannot be any improvement in the limit law for the asymptotic normality of $\thehatcv$. 
%However, \CK{as we show in Section~\ref{se:mc_vs_cv}} it is based on a better approximation of the chf $\vp(\cdot,\theta)$ than that used for $\thehat$. 
%Thus, the control variates estimator $\thehatcv$ improves the finite sample performance  compared to the simulation based estimator $\thehat$.

{In the sequel, we will make various assumptions on different aspects of the underlying process, smoothness of the model, moments of the process, and properties of the weight function. We group these assumptions into the following categories.}

\begin{assumptionA}[Parameter space and time series process]
\mbox{}
\begin{enumerate}[label=$(a.\arabic*)$]
\item $\Theta$ is a compact subset of $\R^q$ and
 $\theta_0 \in \Theta^{\mathrm{o}}$, the interior of $\Theta$.\label{as:compact}
\item $(X_j)_{j \in \Z}$ is a stationary and ergodic sequence\label{as:ergodXt}.
\item $(X_j)_{j \in \Z}$ is $\alpha$-mixing with rate function $(\alpha_j)_{j \in \N}$ satisfying $\sum_{j=1}^\infty (\alpha_j)^{1/r} < \infty$ for some $r > 1$\label{as:amixXt}.
\end{enumerate}
\end{assumptionA}

\begin{assumptionB}[Continuity and differentiability in $\theta$]
\mbox{}
\begin{enumerate}[label=$(b.\arabic*)$]
\item For each $j \in {\N}$, the map $\theta \mapsto {\bstildeX}_j(\theta)$ is continuous on $\Theta$.
\label{as:contXt}
\item For each $j \in {\N}$, the map $\theta \mapsto {\bstildeX}_j(\theta)$ is twice continuously differentiable in an open neighborhood around $\theta_0$\label{as:2diffXt}.
\end{enumerate}
\end{assumptionB}

\begin{assumptionC}[Moments]
\mbox{}
\begin{enumerate}[label=$(c.\arabic*)$]
\item $\E |X_1|^u < \infty$, where $u = 2r / (r-1)$ with $r > 1$ being such that \ref{as:amixXt} holds.\label{as:momX1u}
\item $\E \prod_{j=1}^p |X_j|^{\alpha} < \infty$ for some $\alpha \in (u/2,u]$ where $u = 2r / (r-1)$ with $r > 1$ being such that \ref{as:amixXt} holds\label{as:momTigh}.
\item $\E \suptheta | X_1(\theta)|^4 <\infty$\label{as:EsupX4}.
\item  For each $\theta \in \Theta$, $\E |\gradtheta X_1(\theta)| < \infty$. \label{as:mom1Der}
\item $\E \suptheta | \gradtheta X_1(\theta)|^{2(1+\eps)} <\infty$ and $\E \suptheta | \gradtheta^2 X_1(\theta)|^{1+\eps} < \infty$ for some $\eps > 0$\label{as:momSD}.
%\item The theoretical $p$-dimensional covariance matrix $\Gamma_p$ of the time series process $(X_j)_{j \in \Z}$ satisfies $\det(\Gamma_p) > 0$\q{Is this implied by ergodicity + moment assumptions? Actually by alpha mixing and some moments. We replace it by \ref{as:momX1u}.}\label{as:Gamdetz}.
\end{enumerate}
\end{assumptionC}

\begin{assumptionD}[Weight function]
\mbox{}
\begin{enumerate}[label=$(d.\arabic*)$]
\item $\intrp w(t) \diff t < \infty$\label{as:intWt}.
\item $\intrp |t| w(t) \diff t < \infty$\label{as:inttWt}.
\item $\intrp |t|^{2(1+\eps)} w(t) \diff t < \infty$ for some $\eps > 0$\label{as:intt2Wt}.
\item $\int_{\R^p} \frac{w(t)}{|t|^2} \diff t < \infty$\label{as:insuptd}.
\end{enumerate}
\end{assumptionD}

%\brem%[Checking Assumption~B]
Assumption~B is indeed satisfied by
many linear and non-linear time series processes, in particular, when they  have a representation $X_j(\theta) = f(Z_j,Z_{j-1},\cdots;\theta)$ or \\ $X_j(\theta) = f(Z_j,X_{j-1}(\theta), X_{j-2}(\theta),\cdots;\theta)$  for  iid noise variables $(Z_j)_{j \in \Z}$,  and $f:\R^\infty \times \Theta \mapsto \R$ is a measurable function.
Prominent examples are the MA$(\infty)$ and AR$(\infty)$ representations of a {causal or invertible} ARMA$(p,q)$ model (see e.g. eqs. (3.1.15) and (3.1.18) in \citet{Brockwell13}) or the ARCH$(\infty)$ representation of a GARCH~$(p,q)$
model (see e.g. \citet{Francq11}, Theorem 2.8).
%More examples can be found in \citet{Wu14150}).}\\
In this case, assumptions \ref{as:contXt} and \ref{as:2diffXt} will hold whenever the map $f$ is continuously differentiable for $\theta\in\Theta$. For example, if $f$ is Lipschitz-continuous for $\theta\in\Theta$, 
%such that for each fixed sequence of real numbers $(z_j)_{j \in \N}$ we have
%$$|f(z_1,z_2,\dots;\theta_1) -  f(z_1,z_2,\dots;\theta_2)| \leq K |\theta_1 - \theta_2|, \quad \theta_1, \theta_2 \in \Theta,$$
%with $K < \infty$, 
then the continuity assumption \ref{as:contXt} holds. %Forneron (2019), assumption II page 73
%\erem

%Assume one of the identifiability assumptions (I.1, I.2 or I3).  \q{discuss just to leave I.1?}

The key asymptotic properties, consistency and asymptotic normality of our estimates are stated in the following theorems.
The proofs of these results are presented in the Appendix.

We formulate first the strong consistency results of the parameters.

\begin{theorem}[Consistency of $\thehat$]\label{th:cons}
Assume that \ref{as:compact}, \ref{as:ergodXt}, \ref{as:contXt}, and \ref{as:intWt} hold. Let $H = H(n) \rightarrow \infty$ as $\nto$. Then {$\thehat \stas \theta_0$ as $\nto.$}
\end{theorem}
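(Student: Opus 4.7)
The plan is the classical M-estimator argument: exhibit a deterministic limit $Q(\theta)$ of the criterion $Q_{n,H}(\theta)$, verify that $\theta_0$ is its unique minimizer, establish uniform almost-sure convergence $\sup_{\theta\in\Theta}|Q_{n,H}(\theta)-Q(\theta)|\to 0$, and invoke the standard argmin continuous mapping theorem. First I would define
\[
Q(\theta) := \intrp |\vp(t,\theta_0)-\vp(t,\theta)|^2 w(t)\,\diff t.
\]
By the identifiability statement preceding the theorem, $Q(\theta)=0$ iff $\theta=\theta_0$, so $\theta_0$ is the unique minimizer. Continuity of $\theta\mapsto\bstildeX_1(\theta)$ (assumption \ref{as:contXt}) combined with bounded convergence gives continuity of $\theta\mapsto\vp(t,\theta)$ for every fixed $t$, and a further application of dominated convergence (the integrand is bounded by $4w(t)$, which is integrable by \ref{as:intWt}) yields continuity of $Q$ on the compact set $\Theta$ (assumption \ref{as:compact}).

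For the uniform convergence I would use $\bigl||a|^2-|b|^2\bigr|\le 4|a-b|$, valid for $|a|,|b|\le 2$, applied with $a=\vp_n(t)-\vp_H(t,\theta)$ and $b=\vp(t,\theta_0)-\vp(t,\theta)$, together with the triangle inequality $|a-b|\le|\vp_n(t)-\vp(t,\theta_0)|+|\vp_H(t,\theta)-\vp(t,\theta)|$, to obtain
\[
\sup_{\theta\in\Theta}|Q_{n,H}(\theta)-Q(\theta)| \le 4\intrp|\vp_n(t)-\vp(t,\theta_0)|w(t)\,\diff t + 4\sup_{\theta\in\Theta}\intrp|\vp_H(t,\theta)-\vp(t,\theta)|w(t)\,\diff t.
\]
For the first summand, Birkhoff's ergodic theorem (available through \ref{as:ergodXt}) yields $\vp_n(t)\stas\vp(t,\theta_0)$ pointwise in $t$; since the integrand is dominated by $2w(t)\in L^1$ by \ref{as:intWt}, dominated convergence sends this term to $0$ almost surely.

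For the second summand, I would fix $t$ and apply the uniform strong law of large numbers for iid sequences indexed by a continuous parameter to the family $\theta\mapsto e^{i\la t,\bstildeX_j(\theta)\ra}$, which is continuous in $\theta$ on the compact set $\Theta$ by \ref{as:contXt} and uniformly bounded by $1$; this yields $\sup_{\theta\in\Theta}|\vp_H(t,\theta)-\vp(t,\theta)|\stas 0$ as $\Hto$. Interchanging this pointwise-in-$t$ supremum with the integral against $w$, using the dominating constant $2w(t)\in L^1$, gives $\sup_{\theta\in\Theta}\intrp|\vp_H(t,\theta)-\vp(t,\theta)|w(t)\,\diff t\stas 0$. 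The resulting uniform a.s. convergence of $Q_{n,H}$ to $Q$, together with continuity of $Q$ and compactness of $\Theta$, then delivers $\thehat\stas\theta_0$ by the standard argmin theorem. The principal technical obstacle is this last interchange: the null set in the uniform-in-$\theta$ SLLN depends on $t$, so producing a single almost-sure event on which the integrated supremum vanishes requires either a Fubini argument combined with a Borel-Cantelli bound exploiting the $O(H^{-1/2})$ Monte Carlo rate of $\vp_H-\vp$, or equivalently a direct uniform SLLN applied to the $L^1(w)$-valued random element $\theta\mapsto\bigl(t\mapsto e^{i\la t,\bstildeX_1(\theta)\ra}\bigr)$; all remaining steps are then routine.
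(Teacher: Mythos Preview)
Your proposal is correct in outline and reaches the same destination as the paper, but the route differs at the one point you yourself single out as the obstacle.

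The paper avoids your null-set issue entirely by a compact-exhaustion argument in $t$: it fixes $K_\delta=\{|t|\le\delta\}$, applies the uniform SLLN on the separable Banach space $C(K_\delta\times\Theta)$ to get
\[
\sup_{(t,\theta)\in K_\delta\times\Theta}|\vp_H(t,\theta)-\vp(t,\theta)|\stas 0
\quad\text{and}\quad
\sup_{t\in K_\delta}|\vp_n(t)-\vp(t,\theta_0)|\stas 0
\]
on a \emph{single} null set, and then bounds the complement trivially by $8\int_{K_\delta^c}w(t)\,\diff t\to 0$ as $\delta\to\infty$. Because the uniform law is taken jointly in $(t,\theta)$ over a compact set, no $t$-dependent exceptional sets ever appear.

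Your route---uniform SLLN in $\theta$ only, for each fixed $t$, followed by integration in $t$---also works, and in fact the fix is lighter than you suggest. No Borel--Cantelli or rate information is needed: since $\sup_\theta|\vp_H(t,\theta)-\vp(t,\theta)|$ is jointly measurable in $(t,\omega)$ and its $t$-section exceptional sets all have probability zero, a straightforward Fubini--Tonelli argument shows that for $P$-a.e.\ $\omega$ the convergence holds for $w$-a.e.\ $t$, after which dominated convergence (dominant $2w\in L^1$) finishes the job. The same Fubini step handles your first summand with $\vp_n$. Your alternative suggestion of a Mourier-type SLLN for the $C(\Theta;L^1(w))$-valued element is also valid but heavier than necessary.

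In short: both arguments are sound; the paper's compact-exhaustion trick is the cleaner way to sidestep the very difficulty you flagged, while your approach trades that for a short Fubini argument.
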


\begin{theorem}[Consistency of $\thehatcv$]\label{th:consCV}
Assume that the conditions of Theorem~\ref{th:cons} hold, and additionally \ref{as:momX1u}, \ref{as:EsupX4}, and \ref{as:insuptd}.
 Let $H = H(n) \rightarrow \infty$ as $\nto$. Then {$\thehatcv \stas \theta_0$ as $\nto.$}
\end{theorem}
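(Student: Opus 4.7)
The proof is parallel to that of Theorem~\ref{th:cons}. We identify a deterministic limit $Q^\ast$ and show that $Q^{\text{(cv)}}_{n,H,k}\to Q^\ast$ a.s.\ uniformly on the compact set $\Theta$ (assumption \ref{as:compact}), verify that $\theta_0$ is the unique minimizer, and then invoke the standard argmin-continuous-mapping argument. The natural candidate, obtained by taking formal pointwise limits inside the integrand of \eqref{eq:Qcv}, is
\begin{equation*}
Q^\ast(\theta)=\int_{\R^p}|\vp(t,\theta_0)-\vp(t,\theta)|^2\,\frac{w(t)}{\sigma^2(t)}\,\diff t,\qquad \sigma^2(t)=\var(\la t,\bsX_1\ra),
\end{equation*}
which does not depend on the threshold $k$ because both $\vp^{\text{(cv)}}_H(t,\theta)$ and $\vp_H(t,\theta)$ share the common pointwise limit $\vp(t,\theta)$. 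Uniqueness of $\theta_0$ as the minimizer of $Q^\ast$ follows from the chf-identifiability assumption stated at the beginning of Section~\ref{se:asymp}, since $w(t)/\sigma^2(t)>0$ Lebesgue-a.e.

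Compared with the proof of Theorem~\ref{th:cons}, three new pointwise limits must be established. First, writing $\wh\var(\la t,\bsX_1\ra)=t^T\hat\Gamma_p t$, the second-moment bound \ref{as:momX1u} (recall $u=2r/(r-1)\geq 2$) together with ergodicity \ref{as:ergodXt} gives $\hat\Gamma_p\stas\Gamma_p$, so the data-dependent weight satisfies $\bar w(t)\to w(t)/\sigma^2(t)$ a.s.\ and the indicator $\1_{\{\wh\var<k\}}\to\1_{\{\sigma^2<k\}}$ a.s.\ off the Lebesgue-null set $\{t:\sigma^2(t)=k\}$. Second, for each fixed $(t,\theta)$, the control-variates approximation $\vp^{\text{(cv)}}_H(t,\theta)=\vp_H(t,\theta)-\kappa_H(t,\theta)$ from \eqref{eq:cv_cf}--\eqref{eq:cvc} converges a.s.\ to $\vp(t,\theta)$: the iid SLLN yields $\vp_H(t,\theta)\stas\vp(t,\theta)$ via \eqref{eq:chfH}, the fourth-moment bound \ref{as:EsupX4} makes every empirical moment in \eqref{eq:pnthbest} converge, and \eqref{eq:abcas0} together with continuous mapping gives $\hat\beta_{H,\theta,f_t}\stas\beta^{(\mathrm{opt})}_{\theta,f_t}(h_{t,\theta})$, while $P_{H,\theta}(h_{t,\theta})\stas 0$, so $\kappa_H(t,\theta)\stas 0$. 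Consequently, the integrand of $Q^{\text{(cv)}}_{n,H,k}(\theta)$ converges a.s.\ in $t$ to the integrand of $Q^\ast(\theta)$ for each $\theta\in\Theta$.

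Passing from pointwise to integral convergence is done by dominated convergence. Assuming $\Gamma_p$ is strictly positive definite (a mild nondegeneracy condition consistent with the discussion around \eqref{eq:abcas0}) and noting that $\hat\Gamma_p$ is eventually within half of $\Gamma_p$, one obtains the envelope $\bar w(t)\leq C w(t)/|t|^2$, which is in $L^1(\R^p)$ by \ref{as:insuptd}. This envelope multiplied by the uniform bound $|\vp_n|,|\vp_H|\leq 1$ controls the ``$\vp_H$-half'' of the integrand in \eqref{eq:Qcv}. The main obstacle is the ``$\vp^{\text{(cv)}}_H$-half'', since $|\vp_H^{\text{(cv)}}|$ is not \emph{a priori} bounded by a constant: the regression coefficient $\hat\beta_{H,\theta,f_t}$ can be large when the empirical covariance of $h_{t,\theta}$ is near-singular. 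This is precisely the purpose of the truncation $\1_{\{\wh\var(\la t,\bsX_1\ra)<k\}}$ introduced in \eqref{eq:Qcv}: it restricts $\vp^{\text{(cv)}}_H$ to $t$'s of moderate magnitude, where by \ref{as:EsupX4} and the iid structure of the $\bstildeX_j(\theta)$ one obtains an a.s.\ integrable envelope of the same order as the Monte-Carlo half. Uniformity in $\theta\in\Theta$ finally follows from \ref{as:contXt} (which makes $\theta\mapsto\vp(t,\theta)$ continuous via bounded convergence), the compactness \ref{as:compact}, and the same pointwise-convergence-plus-continuity argument used to prove Theorem~\ref{th:cons}.
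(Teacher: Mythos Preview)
Your outline identifies the correct limit $Q^\ast(\theta)=\int|\vp(t,\theta_0)-\vp(t,\theta)|^2\,w(t)/(t^T\Gamma_p t)\,\diff t$ and the right ingredients: $\hat\Gamma_p\stas\Gamma_p$, $\kappa_H\stas 0$, and the integrability of $w(t)/|t|^2$ from \ref{as:insuptd}. Structurally, however, your route via pointwise convergence + dominated convergence + a separate ``continuity/compactness'' step for uniformity in $\theta$ differs from the paper, which bounds $\sup_{\theta}|Q^{\text{(cv)}}_{n,H,k}(\theta)-Q^\ast(\theta)|$ directly. In the paper the domain is split into $L_n=\{t:t^T\hat\Gamma_p t<k\}$ and $L_n^c$; on $L_n^c$ one has $\bar w\le w/k$ and the argument reduces to that of Theorem~\ref{th:cons}, while on $L_n$ one uses the inclusion $L_n\subseteq L:=\{|t|\le\sqrt{2k/\lambda_{\min}(\Gamma_p)}\}$ (valid for all large $n$) and applies the \emph{uniform} SLLN on the compact set $L\times\Theta$ to the finitely many averages \eqref{eq:CVQ1}--\eqref{eq:CVQ4}, obtaining $\sup_{(t,\theta)\in L\times\Theta}|\kappa_H(t,\theta)|\stas 0$ in one stroke.

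This comparison exposes two genuine gaps in your sketch. First, the sentence ``one obtains an a.s.\ integrable envelope of the same order as the Monte-Carlo half'' is the crux and is not proved. The correction $\kappa_H(t,\theta)$ involves the inverse of an empirical $2\times2$ covariance matrix whose entries scale like $|t|^2,|t|^3,|t|^4$; there is no fixed $L^1$ function dominating $|\vp_n-\vp_H^{\text{(cv)}}|^2\bar w\,\1_{L_n}$ uniformly in $n$ without first showing that $\sup_{L\times\Theta}|\kappa_H|$ is eventually bounded---which is exactly what the paper establishes via the uniform SLLN on the compact $L\times\Theta$ using \ref{as:EsupX4}. Dominated convergence alone does not supply this; the compactness of $L$ (coming from the threshold $k$) and the uniform SLLN are doing the real work, and you should make that explicit. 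Second, your final appeal to ``the same pointwise-convergence-plus-continuity argument used to prove Theorem~\ref{th:cons}'' mischaracterizes that proof: Theorem~\ref{th:cons} does \emph{not} upgrade pointwise to uniform via continuity and compactness, it invokes the uniform SLLN in $C(\Theta\times K_\delta)$ from the outset. For $\thehatcv$ you need uniformity in $\theta$ of $\kappa_H$ as well, and pointwise convergence of $\hat\beta_{H,\theta,f_t}$ plus continuity of the limit is not enough without an equicontinuity or uniform-SLLN argument; again this is exactly what the paper's treatment on $L\times\Theta$ delivers.
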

% Assume \ref{as:compact},  \ref{as:ergodXt}, \ref{as:contXt}, \ref{as:2diffXt}, \ref{as:momTigh}, \ref{as:momSD} and \ref{as:intt2Wt} hold.

The asymptotic normality of the simulation based parameter estimator reads as follows.

\begin{theorem}[Asymptotic normality of $\thehat$]\label{th:asn}
Assume that Assumptions A and B, and the moment conditions \ref{as:momTigh}, \ref{as:mom1Der}, and \ref{as:momSD} hold.
Furthermore, assume that the weight function satisfies \ref{as:intWt}, \ref{as:inttWt} and \ref{as:intt2Wt}.
%the conditions of Theorem~\ref{th:cons} hold, and additionally \ref{as:int}, \ref{as:ergodXt}, \ref{as:2diffXt}, \ref{as:momTigh}, \ref{as:mom1Der}, \ref{as:momSD}, \ref{as:inttWt, and \ref{as:intt2Wt}.
Set $H=H(n) :=\bar H (n) n$ and $\bar H (n)\to\infty$ as $\nto$ and define
\begin{equation}\label{eq:kjdef}
K_j(\theta) =  \int_{\R^p} \Big( \frac{\partial}{\partial{\theta}} \Re(\varphi(t,\theta)), \frac{\partial}{\partial{\theta}} \Im(\varphi(t,\theta)) \Big) \begin{pmatrix}
 \cos(\la t, \bsX_j \ra) - \Re(\vp(t,\theta)) \\ 
  \sin(\la t, \bsX_j \ra) - \Im(\vp(t,\theta))
\end{pmatrix}w(t) \diff t, \quad j \in \N
\end{equation}
and
\begin{equation}\label{eq:rev3}
Q =  \intrp \Big( \frac{\partial}{\partial{\theta}} \Re (\varphi(t,\theta_0)) , \frac{\partial}{\partial{\theta}}  \Im (\varphi(t,\theta_0)) \Big)  \Big( \frac{\partial}{\partial{\theta}} \Re (\varphi(t,\theta_0)) , \frac{\partial}{\partial{\theta}}  \Im (\varphi(t,\theta_0)) \Big)^T  w(t) \diff t.
\end{equation}
If $Q$ is a non-singular matrix, then 
\begin{equation}\label{eq:anthe}
\sqrt{n}(\thehat - \theta_0) \std N(0,Q^{-1} W Q^{-1}), \quad \nto,
\end{equation}
where
\begin{equation}\label{eq:Wdeffor}
W = \var [K_1(\theta_0)] + 2 \sum_{j=2}^{\infty} \cov[ K_1(\theta_0), K_j(\theta_0) ].
\end{equation}
\end{theorem}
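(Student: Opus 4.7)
The plan is to use the classical M-estimator argument based on the first-order condition $\nabla_\theta Q_{n,H}(\thehat) = 0$. Since consistency of $\thehat$ is given by Theorem~\ref{th:cons} and $\theta_0 \in \Theta^{\mathrm{o}}$ by \ref{as:compact}, with probability tending to one $\thehat$ lies in the interior, so the gradient vanishes there. A Taylor expansion of $\nabla_\theta Q_{n,H}$ around $\theta_0$ gives
$$
0 = \nabla_\theta Q_{n,H}(\theta_0) + \nabla^2_\theta Q_{n,H}(\theta^*_n)(\thehat - \theta_0)
$$
for some intermediate $\theta^*_n$ between $\thehat$ and $\theta_0$. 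The strategy is then to show (i) $\sqrt{n}\,\nabla_\theta Q_{n,H}(\theta_0) \std N(0, 4W)$, and (ii) $\nabla^2_\theta Q_{n,H}(\theta^*_n) \stp 2Q$, and conclude by Slutsky's theorem.

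For step (i), differentiating under the integral sign (justified by \ref{as:inttWt} and \ref{as:momSD}) yields
$$
\nabla_\theta Q_{n,H}(\theta_0) = -2 \int_{\R^p} \bigl\{[\Re \vp_n - \Re \vp_H] \nabla_\theta \Re \vp_H + [\Im \vp_n - \Im \vp_H] \nabla_\theta \Im \vp_H\bigr\}(t,\theta_0)\, w(t)\,dt.
$$
The crucial decomposition is $\vp_n - \vp_H(\cdot,\theta_0) = [\vp_n - \vp(\cdot,\theta_0)] - [\vp_H(\cdot,\theta_0) - \vp(\cdot,\theta_0)]$. The second piece contributes noise of order $O_P(H^{-1/2})$ and, because $H = \bar H(n)n$ with $\bar H(n) \to \infty$, satisfies $\sqrt{n}/\sqrt{H} \to 0$; together with uniform control of $\nabla_\theta \vp_H(t,\theta_0)$ over compact $t$-sets (combining the Monte Carlo CLT with \ref{as:intt2Wt}), this piece vanishes after multiplication by $\sqrt{n}$. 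Replacing $\nabla_\theta \vp_H(\cdot,\theta_0)$ by its limit $\nabla_\theta \vp(\cdot,\theta_0)$ with negligible error, we obtain
$$
\sqrt{n}\,\nabla_\theta Q_{n,H}(\theta_0) = -\frac{2}{\sqrt{n}} \sum_{j=1}^n K_j(\theta_0) + o_P(1).
$$
The sequence $(K_j(\theta_0))_{j \in \N}$ is a measurable functional of the strong mixing, stationary process $(\bsX_j)$, hence is itself stationary and $\alpha$-mixing with the same rate. Applying Ibragimov's CLT for strong mixing sequences under \ref{as:amixXt} and the moment bound coming from \ref{as:momTigh} together with \ref{as:intt2Wt} yields $n^{-1/2}\sum K_j(\theta_0) \std N(0,W)$, which gives (i) with variance $4W$.

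For step (ii), a direct calculation gives
$$
\nabla^2_\theta Q_{n,H}(\theta) = 2 \int_{\R^p} \bigl(\nabla_\theta \Re \vp_H, \nabla_\theta \Im \vp_H\bigr)\bigl(\nabla_\theta \Re \vp_H, \nabla_\theta \Im \vp_H\bigr)^T w(t)\,dt - R_{n,H}(\theta),
$$
where the remainder $R_{n,H}(\theta)$ involves the second derivatives of $\vp_H$ multiplied by the residuals $\vp_n - \vp_H$; the latter converge uniformly on compact sets to zero (using ergodicity and the bound $\suptheta\vert\nabla^2_\theta\vp_H(t,\theta)\vert \le \vert t\vert^2\cdot\frac{1}{H}\sum_j \suptheta\vert \bstildeX_j(\theta)\vert^2 + \ldots$ controlled by \ref{as:momSD}), so $R_{n,H}(\theta^*_n)\stp 0$. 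For the remaining term, the uniform-in-$\theta$ version of the law of large numbers (using \ref{as:contXt}, \ref{as:momSD}, \ref{as:intt2Wt}) lets us replace $\nabla_\theta \vp_H$ by $\nabla_\theta \vp(\cdot,\theta^*_n)$ and then use continuity at $\theta_0$ together with $\theta^*_n \stas \theta_0$ to conclude $\nabla^2_\theta Q_{n,H}(\theta^*_n) \stp 2Q$. Non-singularity of $Q$ then delivers the invertibility needed for Slutsky.

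The main obstacle will be the uniform control needed to (a) justify differentiating under the integral twice and interchanging the Monte Carlo limit in $H$ with the integrals and the gradients, and (b) push the Taylor remainder $R_{n,H}(\theta^*_n)$ to zero uniformly in a neighbourhood of $\theta_0$. This requires combining the dominating moment bounds \ref{as:EsupX4}--\ref{as:momSD} with the integrability conditions \ref{as:intWt}--\ref{as:intt2Wt} on the weight $w$, and checking that the class $\{K_j(\theta_0)\}$ inherits enough integrability to apply the mixing CLT. The rate condition $H = \bar H(n) n$ with $\bar H(n)\to\infty$ is exactly what is needed to make the Monte Carlo error asymptotically negligible relative to the $\sqrt{n}$ scaling driven by the observed sample, which explains why the limit law coincides with that of the oracle estimator of \citet{Knight02Corr}.
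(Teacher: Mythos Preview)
Your proposal is correct and follows the paper's overall architecture: Taylor-expand the score, prove a CLT for $\sqrt{n}\,\nabla_\theta Q_{n,H}(\theta_0)$, show $\nabla^2_\theta Q_{n,H}(\theta^*_n)\stp 2Q$ (the paper's Lemma~\ref{le:ConvDeri}), and conclude via Slutsky; the Monte Carlo piece $\vp_H-\vp$ is killed by $\sqrt{n}/\sqrt{H}\to 0$ exactly as in the paper's~\eqref{eq:cheb}.

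The one substantive difference is in how the CLT for the leading term $\sqrt{n}\int(\vp_n-\vp)\,\nabla_\theta\vp\,w\,\diff t$ is obtained. The paper routes this through a \emph{functional} CLT for the process $\sqrt{n}(\vp_n-\vp)$ on compact sets $K_\delta$ (Lemma~4.1(2) of \citet{davis18ADC}, which is where \ref{as:momTigh} enters for tightness), then applies Slutsky and continuous mapping on $C(K_\delta)$, and handles the complement $K_\delta^c$ by separate variance/tail bounds (Lemmas~\ref{le:FuncCon}--\ref{le:FuncCon2}). You instead collapse the integral to the partial sum $n^{-1/2}\sum_{j} K_j(\theta_0)$ and apply Ibragimov's mixing CLT directly. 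This is more elementary and in fact does not need \ref{as:momTigh}: since $|K_j(\theta_0)|\le 2\int_{\R^p}|\nabla_\theta\vp(t,\theta_0)|\,w(t)\,\diff t\le c\int_{\R^p}|t|\,w(t)\,\diff t<\infty$ by \ref{as:mom1Der} and \ref{as:inttWt}, the $K_j(\theta_0)$ are uniformly bounded and every moment condition in Ibragimov's theorem is automatic. The trade-off is that your replacement of $\nabla_\theta\vp_H$ by $\nabla_\theta\vp$ ``with negligible error'' still requires a global-in-$t$ variance bound of exactly the kind the paper carries out in Lemma~\ref{le:FuncCon1}; you acknowledge the need but do not write it down.
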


Theorem~\ref{th:asn} shows that $\thehat$ is asymptotically normal {and achieves the same asymptotic efficiency as the oracle estimator} from \eqref{eq:defOra} {(see Theorem~2.1 in \cite{Knight02Corr})}. 
Therefore, there cannot be any improvement in the limit law for the asymptotic normality of $\thehatcv$. 
However, as we show in Section~\ref{se:mc_vs_cv}, $\thehatcv$ is based on a better approximation of the chf $\vp(\cdot,\theta)$ than that used for $\thehat$. 
Thus, the control variates estimator $\thehatcv$ improves the finite sample performance  compared to the simulation based estimator $\thehat$. 

\begin{remark}
As pointed out in \cite[Remark~2.3]{Knight02Corr},  the asymptotic variance of $\thehat$ in 
\eqref{eq:anthe} can be approximated by replacing $\theta_0$ by $\thehat$ in \eqref{eq:rev3} and \eqref{eq:Wdeffor} and by replacing the infinite sum in \eqref{eq:Wdeffor} by an approximating sum with a kernel and a convenient bandwidth using the methods suggested in \cite{andrews1991heteroskedasticity} and \cite{newey1994automatic}.
\end{remark}

}

\section{Assessing the quality of the estimated chf}\label{se:mc_vs_cv}

In this section we compare the performance of both the Monte Carlo approximation $\vp_{H}(\cdot,\theta)$ and the control variates approximation $\vp^{\text{(cv)}}_H(\cdot,\theta)$ of the chf as defined in \eqref{eq:ecfsim} and \eqref{eq:cv_cf}, respectively. We start with the following comparison of the two chf approximations.

\begin{remark}\label{re:compcvth}[Comparison of $\vp^{\text{(cv)}}_H(\cdot,\theta)$ and $\vp_H(\cdot,\theta)$]
Assume that \ref{as:EsupX4} holds, and let $\vp^{\text{(cvopt)}}_H$ and $\vp^{\text{(cv)}}_H$ be as defined in \eqref{eq:CVoracle} and \eqref{eq:cv_cf}, respectively. 
%For comparison of $\thehatcv$ and $\hat{\theta}_{n,H}$ 
We use that
$\hat{\beta}_{H,\theta,f_t}(h_{t,\theta}) \stas \beta^{(\text{opt})}_{\theta,f_{t}}(h_{t,\theta})$ as $\nto$ with limit given in \eqref{eq:dfbeopt}.
This follows from the representation of $\hat{\beta}_{H,\theta,f_t}(h_{t,\theta})$ as %defined in \eqref{eq:pnthbest}
%separately for $f = \Re(f_t)$ and $f = \Im(f_t)$ such that
\begin{equation*}
\hat{\beta}_{H,\theta,f_t}(h_{t,\theta}) = \hat{\beta}_{H,\theta,\Re(f_t)}(h_{t,\theta}) + i \hat{\beta}_{H,\theta,\Im(f_t)}(h_{t,\theta})
\end{equation*}
and the almost sure convergence of both terms. The quantities needed to compute the estimator in \eqref{eq:pnthbest} are, for each $\nu,\kappa = 1,2$:
\beam
P_{H,\theta}(f_t ) & = &  \frac1H  \sum_{j=1}^H  e^{i \la t,\bstildeX_j(\theta) \ra} ,  \label{eq:CVQ1}\\
P_{H,\theta}(h_{\nu,t,\theta}) & = &  \frac1H  \sum_{j=1}^H  \Big( \la t,\bstildeX_j(\theta) \ra^\nu - \E \la t,\bsX_1(\theta) \ra ^\nu \Big),\nonumber\\%\label{eq:CVQ2}
P_{H,\theta}(f_t h_{\nu,t,\theta}) & = &  \frac1H  \sum_{j=1}^H e^{i\la t,\bstildeX_j(\theta) \ra}  \Big( \la t,\bstildeX_j(\theta) \ra^\nu - \E  \la t,\bsX_1(\theta) \ra^\nu  \Big),\nonumber\\%\label{eq:CVQ3}
P_{H,\theta}(h_{\nu,t,\theta}h_{\kappa,t,\theta}) & = &  \frac1H \sum_{j=1}^H  \Big( \la t,\bstildeX_j(\theta) \ra^\nu - \E \la t,\bsX_1(\theta) \ra^\nu \Big)   \nonumber\\
& & \quad \times\Big( \la t,\bstildeX_j(\theta) \ra^\kappa - \E \la t,\bsX_1(\theta) \ra^\kappa \Big).\label{eq:CVQ4}
\eeam
Hence, strong consistency of $\hat{\beta}_{H,\theta,f_t}(h_{t,\theta})$ follows from the SLLN.
This together with $P_\theta(h_{t,\theta}) = 0$ implies by Theorem~1 in \citet{glynn2002CV} that, as $H \rightarrow \infty$,
\begin{equation*}%\label{eq:cpGlyn}
H^{1/2}\big( \Re\big( \vp^{\text{(cv)}}_H(t,\theta) - \vp(t,\theta)\big)\big) \std N\big(0, \sigma_\theta^2\big( \Re(f_t) - [\beta^{(\text{opt})}_{\theta,\Re(f_t)}(h_{t,\theta})]^T h_{t,\theta}\big) \big),
\end{equation*}
\begin{equation*}%\label{eq:cpGlyn}
H^{1/2}\big( \Im\big( \vp^{\text{(cv)}}_H(t,\theta) - \vp(t,\theta)\big)\big) \std N\big(0, \sigma_\theta^2\big( \Im(f_t) - [\beta^{(\text{opt})}_{\theta,\Im(f_t)}(h_{t,\theta})]^T h_{t,\theta}\big) \big),
\end{equation*}
with 
$$
\sigma_\theta^2\big( \Re(f_t) - [\beta^{(\text{opt})}_{\theta,\Re(f_t)}(h_{t,\theta})]^T h_{t,\theta}\big) \leq \sigma_\theta^2\big( \Re(f_t) \big) \quad \text{and} \quad
$$
$$
\sigma_\theta^2\big( \Im(f_t) - [\beta^{(\text{opt})}_{\theta,\Im(f_t)}(h_{t,\theta})]^T h_{t,\theta}\big) \leq \sigma_\theta^2\big( \Im(f_t) \big),
$$
with $\sigma^2_{\theta}(\cdot)$ as defined in \eqref{eq:defst2}. Therefore, $\vp^{\text{(cv)}}_H(\cdot,\theta)$ provides an approximation of the integral $Q_n(\theta)$ in \eqref{eq:defQast} with smaller variance than $\vp_{H}(\cdot,\theta)$. As a consequence, this favors the control variates estimator $\thehatcv$ over the simulation based estimator $\hat{\theta}_{n,H}$ for large sample sizes $n\in\N$. \hspace*{5cm}
\end{remark}

For all forthcoming examples we choose $p =3$ and $H = 3\,000$. We begin with a stationary Gaussian AR(1) process, where we know the chf $\vp(\cdot)$ explicitly,  and then proceed to the Poisson-AR process, where we approximate the true unknown chf by a precise simulated version.
%, since the $3$-dimensional chf contains sufficient information to identify the parameter of interest. 
%We also choose $H = 3\,000$ and for $w$ a $3$-dimensional Laplace distribution.

\subsection{The {Gaussian} AR(1) process}\label{se:armcf}

We start with a stationary Gaussian AR(1) process to show how the method of control variates improves the Monte Carlo approximation of its chf.
Let $(X_j(\theta))_{j \in \Z}$ be the AR(1) process 
\begin{equation}\label{eq:defar}
X_j(\theta) = \phi X_{j-1}(\theta) + Z_j(\theta), \,  j \in \Z ,\quad (Z_j(\theta))_{j \in \Z} \simiid N(0,\sigma^2),
\end{equation}
with parameter space $\Theta$ being a compact subset of $\{\theta = (\phi,\sigma) : |\phi| < 1,\sigma > 0\}$. 
Then the true chf of $\bsX_1(\theta) = (X_1(\theta),X_2(\theta),X_3(\theta))$ is given by {$\vp(t,\theta)= e^{-\frac12 t^T\Gamma_3(\theta) t}$ for $t\in\R^3,$}
where the covariance matrix $\Gamma_3(\theta)$ is explicitly known and identifies the parameter $\theta$ uniquely; see e.g. \cite{Brockwell13}, Example~3.1.2.
For a fixed $\theta \in \Theta$ and many $t \in \R^3$ we compute the absolute errors
\begin{equation}\label{eq:errort}
\xi_H(t,\theta) = |\vp_{H}(t,\theta) - \vp(t,\theta)|\,\,\mbox{and}\,\, \xi_H^{\text{(cv)}}(t,\theta) = |\vp_{H}^{\text{(cv)}}(t,\theta) - \vp(t,\theta)|
\end{equation}
where $\vp_{H}(\cdot,\theta)$ is the Monte Carlo approximation of the chf of $\bsX_1(\theta) = (X_1(\theta),X_2(\theta),X_3(\theta))$ and $\vp_{H}^{\text{(cv)}}(\cdot,\theta)$ its control variates approximation.
To understand how well we can approximate $\vp(\cdot,\theta)$, we plot in Figure~\ref{fig:ar1_abst_and_vart}, $\xi_H(t,\theta)$ and $\xi_H^{\text{(cv)}}(t,\theta)$ against $\sqrt{\var[ \la t,\bsX_1(\theta) \ra]}$ for different parameters $\theta$.
% for both the Monte Carlo approximation $\vp_{H}(t,\theta)$ in \eqref{eq:ecfsim} and the control variates approximation $\vp^{\text{(cv)}}_{H}(t,\theta)$ in \eqref{eq:cv_cf}. 
These quantities are computed from an iid sample $\bsX_j(\theta), j = 1,\dots,H$ as in \eqref{eq:Xjhblcks}. 
To simulate iid observations from the model \eqref{eq:defar}, we use the fact that the one-dimensional stationary distribution is
%\begin{equation}\label{eq:ardist}
$X_1(\theta) \sim N(0, \sigma^2 / (1 - \phi^2))$,
%\end{equation}
 and then use the recursion in \eqref{eq:defar} to simulate $X_2(\theta)$ and $X_3(\theta)$. 
We chose $500$ randomly generated values of $t$ from the $3$-dimensional Laplace distribution with chf given in \eqref{eq:wtLap}.

It is clear from Figure~\ref{fig:ar1_abst_and_vart} that both the Monte Carlo and the control variates approximations work better when $\sqrt{\var[ \la t,\bsX_1(\theta) \ra]}$ is small, and also that the control variates approximations are best for small values of $\sqrt{\var[ \la t,\bsX_1(\theta) \ra]}$.
The superiority of the control variates approximation for all $t$ and all parameter settings is clearly visible, and
 already expected from Remark~\ref{re:compcvth}. 
%Interestingly, the function $\xi(t,\theta)$ seems to grow linearly with $\sqrt{\var[ \la t,\bstildeX_1(\theta) \ra]}$.
% The plots for other parameter values are available at the file report\_181114.pdf

\subsection{The Poisson-AR model}\label{se:parm_cf}

We consider a nonlinear time series process for time series of counts, which has been proposed originally in \citet{Zeger88}. 
%Such models have been used, for instance, for disease counts (see \citet{Zeger88}, \citet{campbell1994time}, \citet{chan1995monte} and \citet{davis1999modeling}).
A prototypical Poisson-AR(1) model suggested in \citet{DavisRod} assumes that the observations $(X_j(\theta))_{j \in \Z}$ are independent and Poisson-distributed with means $e^{\beta + \alpha_j(\theta)}$ where the process $(\alpha_j(\theta))_{j\in\Z}$ is a latent stationary Gaussian AR(1) process, given by the equations
\begin{equation*}%\label{eq:defpdm}
\alpha_j(\theta) = \phi \alpha_{j-1}(\theta) + \eta_j(\theta), \, j\in\Z, \quad (\eta_j(\theta))_{j \in \Z} \simiid N(0,\sigma^2),
\end{equation*}
with parameter space $\Theta$ being a compact subset of  $\{\theta = (\beta,\phi,\sigma) :|\phi| < 1, \beta\in\R, \sigma > 0\}$. 
The parameter $\theta$ is uniquely identifiable from the second order structure, which has been computed in Section~2.1 of \citet*{Davis00PRM}.

For this model, the true chf of $\bsX_1(\theta) = (X_1(\theta),X_2(\theta),X_3(\theta))$ cannot be computed in closed form. 
To mimic the assessment of the errors in eq. \eqref{eq:errort}, we simulate $1\,000\,000$ iid observations from $\bsX_1(\theta)$
by first simulating a Gaussian AR(1) process $(\alpha_1(\theta),\alpha_2(\theta),\alpha_3(\theta))$ (as described in Section~\ref{se:armcf}) and then simulating independent Poisson random variables with means $e^{\beta+\alpha_1(\theta)}$, $e^{\beta+\alpha_2(\theta)}$ and $e^{\beta+\alpha_3(\theta)}$, respectively.
From this we compute the empirical characteristic function and take it as $\vp(\cdot,\theta)$ in the absolute error terms \eqref{eq:errort}. 

We compare the performance of both the Monte Carlo approximation and the control variates approximation of the chf. Figure~\ref{fig:par} presents the results. The plots in Figure~\ref{fig:par} are also in favor of the control variates approximation, when compared to the Monte Carlo approximation.

\noindent%
\begin{minipage}{\linewidth}% to keep image and caption on one page
\makebox[\linewidth]{%        to center the image
  \includegraphics[width = 14cm, height=23cm]{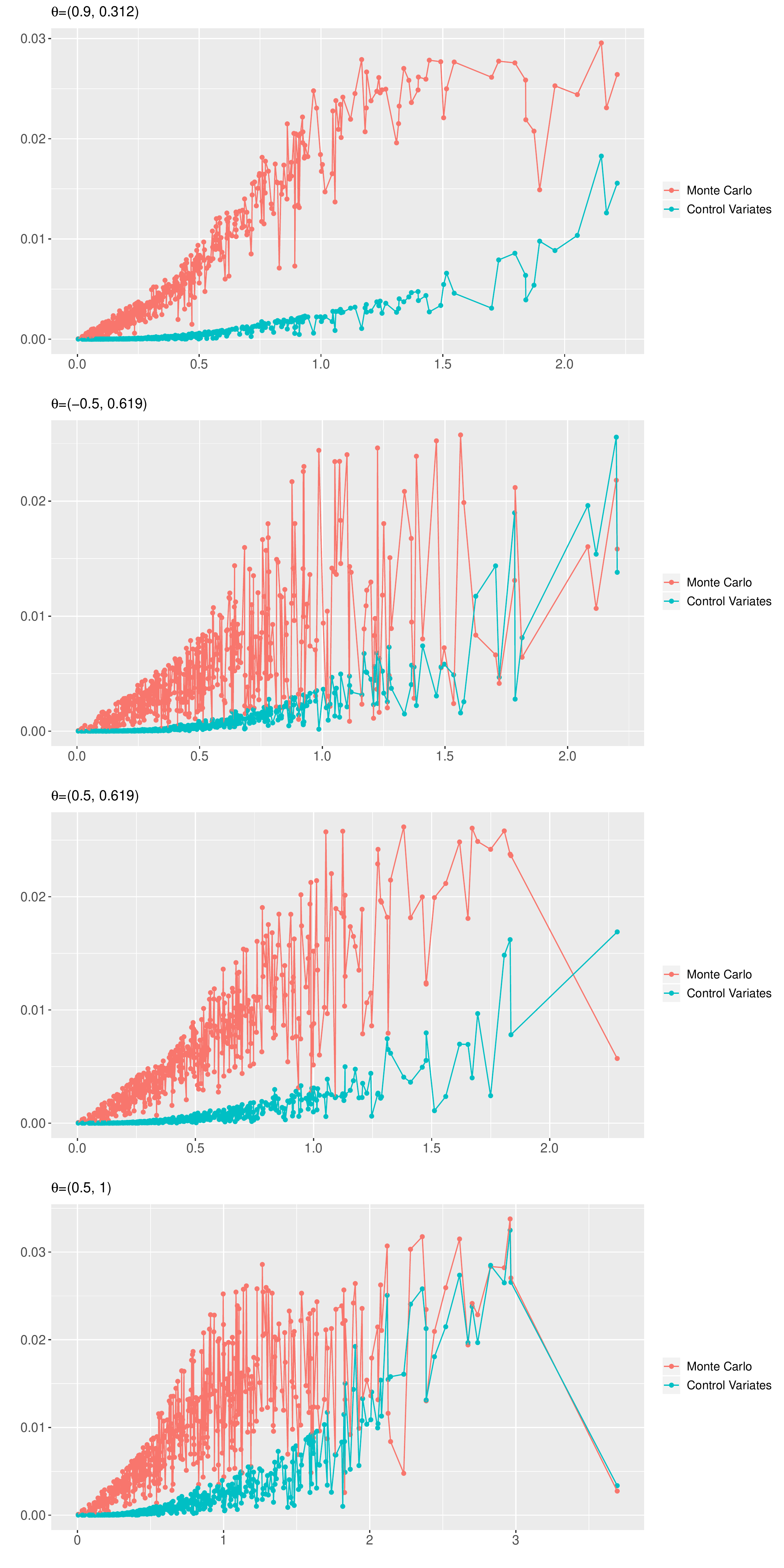}}
\captionof{figure}{\footnotesize Gaussian AR(1) model: absolute error $\xi_H(t,\theta)$ {(red)} and $\xi_H^{\text{(cv)}}(t,\theta)$ {(green)} for $p=3$ and $H=3\,000$ as in eq. \eqref{eq:errort}. 
We use $500$ randomly generated values of $t \in \R^3$ from the Laplace distribution (with chf as in \eqref{eq:wtLap} below), which are plotted against $\sqrt{\var[ \la t,\bsX_1(\theta) \ra]}$.}\label{fig:ar1_abst_and_vart}%      only if needed  
\end{minipage}

% \begin{figure}[h]
% \centering
%   \includegraphics[width = 16cm, height=22cm]{ar1.pdf}
% \caption{AR model\q{drop first plot}}\label{fig:ar1_abst_and_vart}
% \end{figure}

\noindent%
\begin{minipage}{\linewidth}% to keep image and caption on one page
\makebox[\linewidth]{%        to center the image
  \includegraphics[width = 14cm, height=23cm]{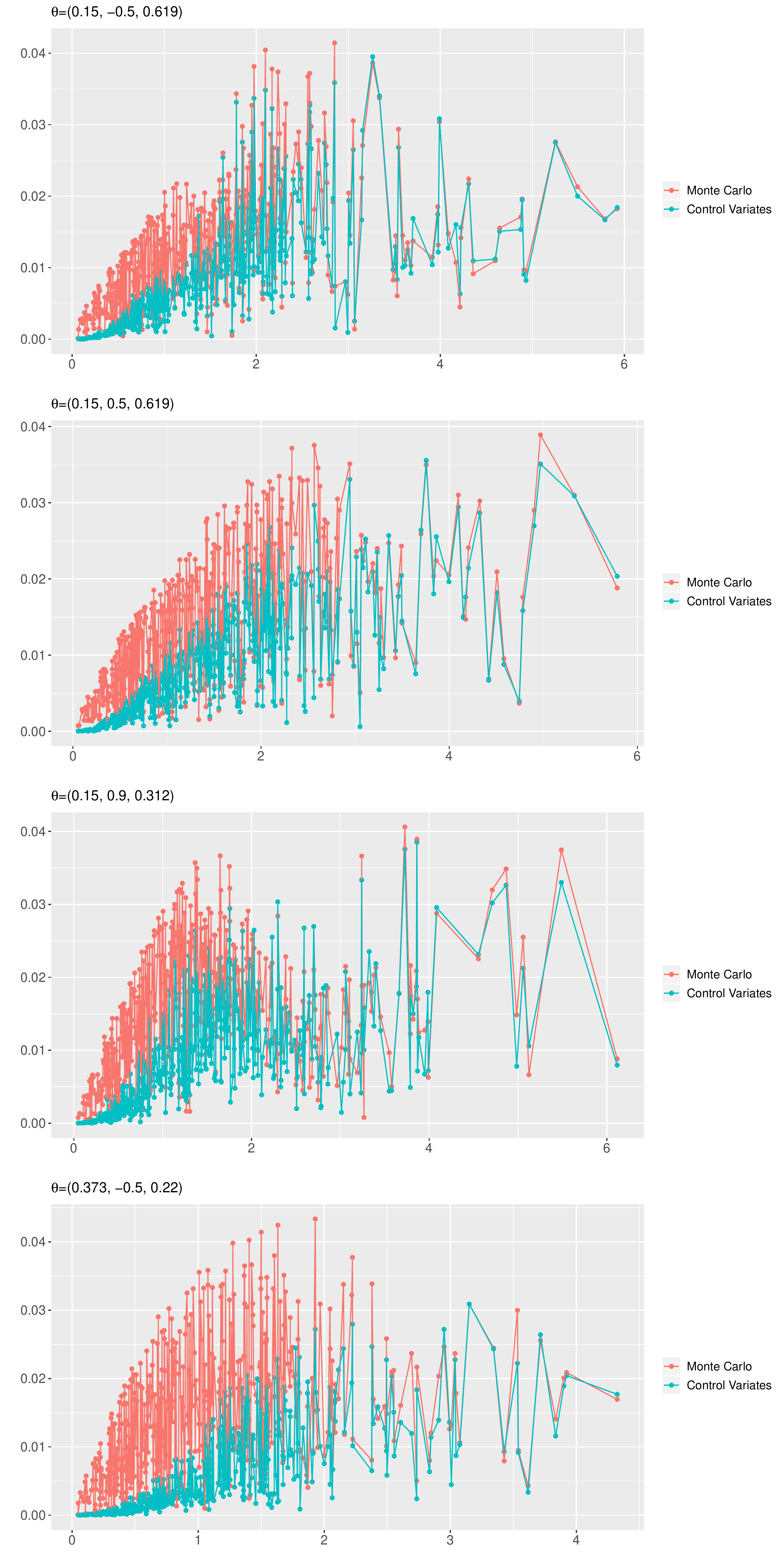}}
\captionof{figure}{\footnotesize Poisson-AR model: Absolute errors $\xi_H(t,\theta)$ {(red)} and $\xi_H^{\text{(cv)}}(t,\theta)$ {(green)} for $p=3$ and $H=3\,000$ as in eq. \eqref{eq:errort}. We use $500$ randomly generated values of $t \in \R^3$ from the Laplace distribution (with chf as in \eqref{eq:wtLap} below), which are plotted against $\sqrt{\var[ \la t,\bsX_1(\theta) \ra]}$.}\label{fig:par}%      only if needed  
\end{minipage}

\section{Practical aspects and simulation results}\label{s5}

Our objective is to obtain a simple expression of the integrated mean squared error $Q_{n,H}(\theta)$ in \eqref{eq:defQ}, which is needed to compute the estimator in \eqref{eq:defThe}. 
For a weight function $w$ in \eqref{eq:defQ}, we write
\begin{equation}\label{eq:what}
\tilde{w}(x) = \int_{\R^p} e^{i\la t, x \ra } w(t) \diff t,\quad x \in \R^p,
\end{equation}
for its Fourier transform. 
Our preference is on weight functions such that \eqref{eq:what} is known explicitly.

\bexam[Weight functions and their characteristic functions]\\
(i) \, Laplace: $w$ is a multivariate Laplace density with chf
\begin{equation}\label{eq:wtLap}
\tilde{w}(t) = \frac{1}{(1 + (2\pi^2)^{-1} \,t^T t )}, \quad t \in \R^p.
\end{equation}
(ii) \, Cauchy: $w$ is  a multivariate Cauchy density with chf 
\begin{equation*}%\label{eq:wtCau}
\tilde{w}(t) = e^{-\sqrt{t^T t }}, \quad t \in \R^p.
\end{equation*}
(iii) \, Gaussian: $w$ is a standard multivariate Gaussian density with chf
\begin{equation}\label{eq:wtGau}
\tilde{w}(t) = e^{-\frac12 t^T t }, \quad t \in \R^p.
\end{equation}
\eexam

%The proof of the following lemma can be found in the Appendix.

\begin{lemma}\label{lem:chfexpl}
Let $Q_{n,H}(\theta)$ be as in \eqref{eq:defQ} and $w$ a weight function with Fourier transform $\tilde w$. 
Then
\beam\label{eq:Qfour}
Q_{n,H}(\theta) 
& = &\frac{1}{n^2} \sum_{k=1}^{n}\sum_{j=1}^{n} \tilde{w}(\bsX_j-\bsX_k) + \frac{1}{H^2} 
\sum_{j=1}^{H}\sum_{k=1}^{H} \tilde{w}(\bstildeX_{j}(\theta)-\bstildeX_{k}(\theta))\nonumber\\
&& -\frac{1}{Hn} \sum_{k=1}^{H}\sum_{j=1}^{n} \Big(\tilde{w}(\bsX_j-\bstildeX_k(\theta))  + \tilde{w}(\bstildeX_k(\theta)-\mathbf{X}_j)\Big).
\eeam
\end{lemma}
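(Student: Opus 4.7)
The statement is essentially an algebraic identity, obtained by expanding the modulus squared in the integrand of $Q_{n,H}(\theta)$ and recognizing each resulting integral as the Fourier transform $\tilde w$ evaluated at a difference of (simulated or observed) blocks. The plan is therefore to carry out this expansion carefully and to justify the interchange of integration and the (finite) sums.

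First, I would write $|\vp_n(t)-\vp_H(t,\theta)|^2 = (\vp_n(t)-\vp_H(t,\theta))\,\overline{(\vp_n(t)-\vp_H(t,\theta))}$, and substitute the definitions
\[
\vp_n(t)=\frac1n\sum_{j=1}^n e^{i\la t,\bsX_j\ra},\qquad \vp_H(t,\theta)=\frac1H\sum_{j=1}^H e^{i\la t,\bstildeX_j(\theta)\ra}.
\]
Expanding the product and collecting the four double sums gives, for every $t\in\R^p$,
\[
|\vp_n(t)-\vp_H(t,\theta)|^2 = A_n(t)+B_H(t,\theta)-C_{n,H}(t,\theta),
\]
where
\[
A_n(t)=\frac1{n^2}\sum_{j,k=1}^n e^{i\la t,\bsX_j-\bsX_k\ra},\quad B_H(t,\theta)=\frac1{H^2}\sum_{j,k=1}^H e^{i\la t,\bstildeX_j(\theta)-\bstildeX_k(\theta)\ra},
\]
and
\[
C_{n,H}(t,\theta)=\frac1{nH}\sum_{j=1}^n\sum_{k=1}^H\Big(e^{i\la t,\bsX_j-\bstildeX_k(\theta)\ra}+e^{i\la t,\bstildeX_k(\theta)-\bsX_j\ra}\Big).
\]

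Second, I would integrate each term against $w(t)\diff t$. Since each sum is finite and $|e^{i\la t,x\ra}|=1$ for every $t,x\in\R^p$, the integrand is dominated by a multiple of $w(t)$, which is integrable by assumption \ref{as:intWt}. Fubini's theorem therefore justifies exchanging the (finite) summation with the integral. For each pair of blocks $y,z\in\R^p$,
\[
\int_{\R^p} e^{i\la t,y-z\ra}\,w(t)\,\diff t = \tilde w(y-z),
\]
by the definition \eqref{eq:what} of $\tilde w$.

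Third, applying this identity to every term and reassembling yields
\[
Q_{n,H}(\theta)=\frac1{n^2}\sum_{j,k=1}^n \tilde w(\bsX_j-\bsX_k)+\frac1{H^2}\sum_{j,k=1}^H \tilde w(\bstildeX_j(\theta)-\bstildeX_k(\theta))-\frac1{nH}\sum_{j=1}^n\sum_{k=1}^H\Big(\tilde w(\bsX_j-\bstildeX_k(\theta))+\tilde w(\bstildeX_k(\theta)-\bsX_j)\Big),
\]
which is exactly \eqref{eq:Qfour}. There is no real obstacle here beyond bookkeeping; the only conceptual ingredient is the pairing of the complex-exponential integrand with the Fourier transform $\tilde w$ and the use of \ref{as:intWt} to legitimize the interchange.
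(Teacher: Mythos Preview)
Your proof is correct and is precisely the natural computation one expects: expand $|\vp_n(t)-\vp_H(t,\theta)|^2$ into a finite double sum of complex exponentials and integrate term by term against $w$, using $\int_{\R^p}e^{i\la t,x\ra}w(t)\,\diff t=\tilde w(x)$ and the integrability of $w$ from \ref{as:intWt}. The paper does not spell out a separate proof of this lemma in the provided source, but the parallel derivation leading to \eqref{eq:QAstLast} confirms that exactly this expansion is what is intended.
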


Formula \eqref{eq:Qfour} is very useful, since it avoids the computation of a $p$-dimensional integral. 
Additionally, since the first double sum on the right-hand side of \eqref{eq:Qfour} does not depend on the argument $\theta$,
for the optimization it can be ignored.
%Three useful examples of weight functions $w$ that can be used to compute \eqref{eq:Q_four} are:

% \q{\subsection{\citet{Csorgo81} Result and justification of why one needs to have a good approximation of the true cf. (NEEDED?)}

% To compute the estimator in \eqref{eq:defQast} we need to have a good approximation of the true characteristic function $\vp(t,\theta)$ for $\theta \in \Theta$ and $t \in \R^p$. For the estimator $\vp_{H}(t,\theta)$, Theorem~2.1 in \citet{Csorgo81} states that for each fixed $\theta \in \Theta$ and each bounded set $K \in \R^p$ we have
% \begin{equation}\label{eq:supSphi}
% \sup_{t \in K} |  \vp_{H}(t,\theta) - \vp(t,\theta)| \stas 0, \quad \Hto.
% \end{equation}
% The author also points out that, by the same argument as in \citet{Feuerverger77}, the supremum in \eqref{eq:supSphi} cannot generally be taken over $\R^p$. This means that it is also important to understand how well one can approximate $\vp(t,\theta)$ for a fixed $\theta$ and different values of $t$.
% }

%For both examples $p = 3$ is already sufficient to give satisfactory estimators. 
\begin{remark}
{When evaluating the integrated weighted mean squared errors  \eqref{eq:defQ}, \eqref{eq:Qcv}, or \eqref{eq:Qfour} in practice,
they need to  be deterministic functions of $\theta$. This is enforced by taking a fixed seed for every $j = 1,\dots,H$, when simulating $\bstildeX_j(\theta)$ for different values of $\theta \in \Theta$.}
\end{remark}

In the following two examples we study the finite sample behavior of the estimators $\thehat$ and $\thehatcv$. We begin with a stationary Gaussian ARFIMA model, whose chf is explicitly known so that we can use the oracle estimator from Section~\ref{s21}. Afterwards we come back to the Poisson-AR process.
We choose $p=3$, since the 3-dimensional chf contains sufficient information to identify the parameter of interest.
We also choose $H=3\,000$. 

\subsection{The ARFIMA model}\label{s52}

Let $(X_j(\theta))_{j \in \Z}$ be the stationary Gaussian ARFIMA$(0,d,0)$ model
\begin{equation*}%\label{eq:defarf}
(1-B)^d X_j(\theta) = Z_j(\theta), \, j\in\Z, \quad (Z_j(\theta))_{j \in \Z} \simiid N(0,\sigma^2),
\end{equation*}
where $B$ is the backshift operator, with parameter space $\Theta$ being a compact subset of $\{\theta=(d,\sigma) : d \in (-0.5, 0.5), \sigma > 0\}$. 
Then the true chf of $\bsX_1(\theta) = (X_1(\theta),X_2(\theta),X_3(\theta))$ is given by {$\vp(t,\theta)=e^{-\frac12 t^T\Gamma_3(\theta)t}$ for $t\in\R^3, \theta\in\Theta,$}
where the covariance matrix $\Gamma_3(\theta)$ is explicitly known and identifies the parameter $\theta$ uniquely; see e.g. \citet{Taqqu17Book},  Corollary~2.4.4. 

For the long-memory case, for each value of $d \in \{0.05,\dots,0.45\}$ we compare the new estimators 
with the MLE method as implemented in the R package \texttt{arfima}. 
%We perform a simulation study with $H \in \{500, 3\,000\}$. \CK{Why now also with $H=500$?}
Thus, for many $\theta \in \Theta$, we generate iid Gaussian random vectors with mean zero and covariance $\Gamma_3(\theta)$ and use them to construct the simulation based estimator $\thehat$.

Since the chf $\vp(\cdot,\theta)$ is known in closed form, we are able to compute the oracle estimator $\hat{\theta}_n$ from \eqref{eq:defQast}. 
%For practical \ts{purposes}
In order to compute the integral appearing in \eqref{eq:defQast} in closed form, we choose the weight function $w(t) = (2\pi)^{-3/2} e^{-\frac{1}{2} t^T t}, t\in\R^3$.

Then the integral in \eqref{eq:defQast}, which needs to be minimized with respect to the parameter $\theta$, can be evaluated similarly as in \eqref{eq:Qfour}, giving for the chf being known, {that $Q_n(\theta)$ can be written as}
%\begin{equation}\label{eq:QastArf}
\beam\label{eq:QAstLast}
&& \int_{\R^3} \bigg| \frac{1}{n}\sum_{j=1}^n e^{i\la t,\bsX_j \ra} - e^{-\frac{1}{2}t^T \Gamma_3(\theta) t}\bigg|^2 w(t) \diff t = \big(\det\big((2\Gamma_3(\theta) + I)^{-1}\big)\big)^{\frac{1}{2}}\nonumber\\
 &  & \quad\quad + \frac{1}{n^2} \sum_{j=1}^n \sum_{k=1}^n \exp\Big\{-\frac{1}{2}(\bsX_j - \bsX_k)^T(\bsX_j-\bsX_k)\Big\} 
   \\
& &\quad\quad -2 \, \big(\det\big((\Gamma_3(\theta) + I)^{-1}\big)\big)^{\frac{1}{2}} \frac{1}{n} \sum_{j=1}^n \exp\Big\{-\frac{1}{2}\bsX_j^T(\Gamma_3(\theta) + I)^{-1}\bsX_j\Big\}\nonumber. 
%& & \quad\quad\quad\quad + \big\{ \det\big(2\pi(2\Gamma(\theta) + I)^{-1}\big)\big\}^{\frac{1}{2}} \nonumber.
\eeam
We compare in Table~\ref{tb:arfimaGa} the performance of the simulation based estimator $\thehat$, the oracle estimator $\hat{\theta}_n$ in \eqref{eq:defOra} based on the minimization of \eqref{eq:QAstLast},
%in \eqref{eq:Qfour} based on the simulated characteristic function 
and the MLE. We fixed $\sigma=1$ for all simulated sample paths used in the simulation study. For both $\hat{\theta}_n$ and $\hat{\theta}_{n,H}$, we also estimate $\sigma$ but report only the performance for the estimator of $d$ which is the key parameter of interest in long-range dependence models.
%The results in Table~\ref{tb:arfimaGa} show some bias for $\thehat$ for $d = 0.05$ and $d = 0.1$ when $H = 500$, which disappears when $H = 3\,000$. 
%Not surprisingly, the quality of $\thehat$ for $H = 3\,000$ clearly outperforms that for $H = 500$ in terms of bias and RMSE. 
We notice that $\thehat$ %for $H = 3\,000$
 is comparable to the oracle estimator, so in this model there is no need to use control variates. When comparing both simulation based estimators, the RMSEs are almost the same for all $d \geq 0.20$. 
The MLE has a smaller RMSE, but both $\hat{\theta}_n$ and $\thehat$ have a smaller bias than the MLE. In the simulations, the density plots for the estimates of $d$ with $d \in \{0.25,0.3\}$ look reasonably normal. On the other hand, the estimates when $d$ is closer to $0.5$ are rather skewed, which is expected due to the constraint $d < 0.5$. In this case a larger sample is needed in order to obtain more normal looking densities.

\begin{remark}
 We also investigate the feasibility of our new estimation procedures for misspecified models.  We take a Gaussian ARFIMA as the true model, but for the data we modify the distribution of its innovations. Specifically, we consider the two cases of ARFIMA models driven by noise with a Laplace distribution and with a Student-$t$ distribution with 6 degrees of freedom. The estimation results under the two misspecification scenarios are shown in Tables~\ref{tb:arfimadExp} and \ref{tb:arfimadStudent} of the Appendix. 
 The quasi-oracle estimator is based on the Gaussian chf, and 
the quasi-MLE (QMLE) is found by maximizing the Gaussian likelihood, even though the data are in fact nonGaussian.  For both noise distributions, we see very little difference in the performance of the three estimators (QMLE compared with MLE) from the Gaussian ARFIMA scenario in Table~\ref{tb:arfimaGa}.  In particular, our estimator continues to have small bias and RMSE that is comparable to the oracle estimator and only slightly larger than that of the QMLE.  Of course, it is known that the QMLE estimators behave asymptotically the same as the MLE when the data is Gaussian.   
\end{remark}

\subsection{The Poisson-AR process}\label{s51}

The Poisson-AR model has been defined in Section~\ref{se:parm_cf}. 
We conduct a simulation experiment in the same setting as in Table~5 in  \citet{DavisRod} and Table~3 in \citet{Davis11PL}. 
The results are shown in Table~\ref{tb:pam} of the Appendix for $n = 400$ and nine different parameter settings, where we also classify the models by the corresponding index of dispersion $D$ of the random variable $e^{\beta + \alpha_1}$, which assumes values in $\{0.1,1,10\}$ as shown in \citet{DavisRod}.

We compare both the simulation based estimator $\thehat$ and control variates based estimator $\thehatcv$. We fix $H = 3\, 000$, $p = 3$ and  the $3$-dimensional Laplace density as in \eqref{eq:wtLap} for $w$. 
To simulate iid observations of $(X_1(\theta),X_2(\theta),X_3(\theta))$ we proceed as explained in Section~\ref{se:parm_cf}. 
The simulation based estimator $\thehat$ in \eqref{eq:defThe} is computed via \eqref{eq:Qfour}. 
Unfortunately, such a formula cannot be obtained for the control variates based estimator $\thehatcv$, since the introduction of the correction $\kappa_H$ in \eqref{eq:cvc} introduces additional polynomial terms into $Q_{n,H,k}^{\text{(cv)}}$ in \eqref{eq:Qcv}. 
Thus, we resort to numerical integration to evaluate $\thehatcv$. 
%Additionally, we choose the weight function for $\thehatcv$ as $\bar{w}(t) = (\wh{\var}(\la t,\bsX_1 \ra))^{-1} w(t)$. This choice is motivated by the findings in Section~\ref{se:mc_vs_cv} for the Poisson-AR model, where the estimates of the chf are most well behaved for values of $t$ for which $\wh{\var}(\la t,\bsX_1 \ra)$ is small.

%The two different estimators mentioned before are referred to in Table~\ref{tb:pam} as CF1 ($\hat{\theta}_{500,3\,000,3}$) and CF2 ($\hat{\theta}^{\text{(cv)}}_{500,3\,000,3}$), respectively. 

Our findings are as follows.
For $D \in \{1,0.1\}$, the control variates based estimator $\hat{\theta}^{\text{(cv)}}_{n,H,k}$ for {$k=1$} presents smaller bias and RMSE than the simulation based estimator $\thehat$ in most cases, in all others it is comparable. {The smallest RMSE values are shaded in {Table~4}.
Additionally, a significant improvement in the bias for estimating $\phi$ is noticeable for $\theta = (0.373,0.500, 0.220)$ and $\theta = (0.373,0.900, 0.111)$. 
{This example shows the advantage of using control variates to improve the estimation of the model parameters.   This is not surprising in view of the improved performance of estimating the characteristic function as seen in all three panels of Figure~2.}

We compare now the control variates based estimator $\hat{\theta}^{\text{(cv)}}_{n,H,k}$ in Table~2 of the Appendix, with the results for the consecutive pairwise likelihood (CPL) from Table~3 in \citet{Davis11PL}, which is referred to as CPL1 in that paper. 
The bias of $\hat{\theta}^{\text{(cv)}}_{n,H,k}$ is smaller than that of CPL1 for the estimated $\beta$ and $\sigma$ for almost all cases, in all others it is comparable.
For $\phi$ the bias of $\hat{\theta}^{\text{(cv)}}_{n,H,k}$ and CPL1 are comparable, except that $\hat{\theta}^{\text{(cv)}}_{n,H,k}$ {shows} poor performance for estimating $\phi$ for the true parameter $(\beta,\phi,\sigma) = (0.373,0.9,0.111)$. 
This is due to the fact that the simulated sample paths contain a large number of zeros, giving very little information for the parameter estimation. The estimated values for $\beta$ look normal for all parameter choices. The sampling distributions of the other parameter estimates look close to normal, except in the boundary. In particular, the density for the estimates of $\phi$ when $\phi = 0.9$ or $\sigma \in \{0.22,0.111\}$ and estimates of $\sigma$ when $\sigma \in \{0.22,0.111\}$ show some asymmetry, deviating from normality. This is not unexpected because they are close to the boundary.

%%%%%%%%%%%%%%%%%%%%%%%%%
%
% APPENDIX
%
%%%%%%%%%%%%%%%%%%%%%%%%%

\appendix 

\section{Appendix}

Here we present the proofs of %Lemma~\ref{lem:chfexpl} and 
the main Theorems, as well as tables of results on the simulation study.
%Section~\ref{se:lemma1} gives a proof of Lemma~\ref{lem:chfexpl}. 
Then, in Section~\ref{se:prof_sec4} we provide the proofs of Theorems~\ref{th:cons}, \ref{th:consCV}, and \ref{th:asn}. 
Finally, we present in Section~\ref{se:tables} the tables summarizing the finite sample behavior of the simulation based estimators for {ARFIMA models driven by noise from Gaussian, Laplace, and Student-$t$ distributions,} and the Poisson-AR(1) model discussed in Section~\ref{s5}.
\par

\subsection{Proofs of the main results}\label{se:prof_sec4}

In the following we define $H = H(n)$ and $\bar H =\bar H (n)=H(n)/n$, but omit the argument $n$ for notational simplicity.
Throughout the letter $c$ stands for any positive constant independent of the respective argument.
Its value may change from line to line, but is not of particular interest.
For a matrix with only real eigenvalues $\lambda_{\min}(\cdot)$ denotes the smallest eigenvalue.

We often use the uniform SLLN, which guarantees for a continuous stochastic process $(Z(t))_{t\in \R^p}$ satisfying $\E \sup_{t\in K} |Z(t)|<\infty$ that  \\$\sup_{t\in K} |Z(t)-\E Z(t)|\stas 0$ as $\nto$ for every compact set $K\subset\R^p$.
More precisely, we use the SLLN on the separable Banach space $C(K)$, the space of continuous functions on the compact set $K \subset \R^p$, endowed with the sup norm  (see e.g. Theorem~16(a) in \citet{Ferg} or Theorem~9.4 in \citet{Part}).

\noindent
\textbf{Proof of Theorem~\ref{th:cons}}: Let
\begin{equation*}
 Q(\theta) =  \intrp \big|  \vp(t,\theta_0)  -  \vp(t,\theta)  \big|^2 w(t) \diff t
\end{equation*}
be the candidate limiting function of  $Q_{n,H}(\theta)$. For $\delta > 0$ define the set
\begin{equation}\label{eq:def:kdel}
K_{\delta} = \{t \in \R^p: |t| \leq \delta\}.
\end{equation}
Since $| e^{i \la t,\bstildeX_1(\theta) \ra }| = 1$ for all $\theta$ and $t$, %it follows that $\E \sup_{(t,\theta) \in \Theta \times K_\delta} | e^{i \la t,\bstildeX_1(\theta) \ra }| \leq 1$. Since 
and the random elements $(\tilde\bsX_j(\theta), \theta \in \Theta)_{j=1}^{\infty}$ are iid,
%This together with \ref{as:compact} and \ref{as:contXt} allows us to apply the ergodic theorem in $C(\Theta \times K_\delta)$ (see e.g. Theorem 2.7 in \citet{Straumann06}) to conclude that
the uniform SLLN holds giving 
\begin{equation}\label{eq:ecfsup}
\sup_{(t,\theta) \in \Theta \times K_\delta}  \bigg| \frac{1}{H} \sum_{j=1}^H e^{i \la t,\bstildeX_j(\theta) \ra }- \vp(t,\theta)    \bigg| \stas 0, \quad \nto.
\end{equation}
%Additionally, it follows from \ref{as:ergodXt} and the ergodic theorem in $C(K_\delta)$ that
% and the continuity of the map $t \mapsto e^{i<\cdot,x}>$ on $K_\delta$ for each fixed $x \in \R^p$ 
In particular, for $\theta=\theta_0$ we also have
\begin{equation}\label{eq:ecfsup2}
\sup_{t \in  K_\delta}  \bigg| \frac{1}{n} \sum_{j=1}^n e^{i \la t,\bsX_j \ra }- \vp(t,\theta_0)    \bigg| \stas 0, \quad \nto.
\end{equation}
Applying the inequality $||a|^2 - |b|^2| \leq 2|a-b|$ for  $a,b \in \C, |a|,|b| \leq 1$ gives
\begin{equation}\label{eq:con_2}
\begin{split}
&  |Q_{n,H}(\theta) - Q(\theta)| \\
&   = \intrp \bigg|  \Big|  \frac{1}{n}\sum_{j=1}^n e^{i \la t,\bsX_j \ra } -  \frac{1}{H} \sum_{j=1}^H e^{i \la t,\bstildeX_j(\theta) \ra }\Big|^2 -  
 |  \vp(t,\theta_0)   -  \vp(t,\theta)   |^2 \bigg| w(t) \diff t \\
& \leq 2 \intrp  \bigg|   \frac{1}{n}\sum_{j=1}^n e^{i \la t,\bsX_j \ra }  - \vp(t,\theta_0)   + 
\vp(t,\theta)  -  \frac{1}{H} \sum_{j=1}^H e^{i \la t,\bstildeX_j(\theta) \ra }    \bigg| w(t) \diff t  \\
 & \leq 2 \intrp \bigg\{   \Big| \frac{1}{n}\sum_{j=1}^n e^{i \la t,\bsX_j \ra }  - \vp(t,\theta_0)  \Big|  + 
  \sup_{\theta \in \Theta}  \Big| \vp(t,\theta)  -   \frac{1}{H} \sum_{j=1}^H e^{i \la t,\bstildeX_j(\theta) \ra } \Big|   \bigg\} w(t) \diff t \\
 & \leq 2  \sup_{(t,\theta) \in \Theta \times K_\delta} \bigg\{   \Big| \frac{1}{n}\sum_{j=1}^n e^{i \la t,\bsX_j \ra }  - \vp(t,\theta_0)  \Big|  + 
   \Big| \vp(t,\theta)  -   \frac{1}{H} \sum_{j=1}^H e^{i \la t,\bstildeX_j(\theta) \ra } \Big|   \bigg\}  \\  
  & \quad \times \int_{K_\delta} w(t) \diff t + 8 \int_{K^c_\delta} w(t) \diff t .
\end{split}
\end{equation}
Applying $\suptheta$ on both sides of \eqref{eq:con_2}, using \eqref{eq:ecfsup} combined with \ref{as:intWt}, and taking the limit for $\delta \downarrow 0$ gives
\begin{equation}\label{eq:Q_n_lim}
 \sup_{\theta \in \Theta}  |Q_{n,H}(\theta) - Q(\theta)| \stas 0, \quad \nto.
\end{equation}
Now we prove that $Q(\theta) = 0$ if and only if $\theta = \theta_0$. Obviously $Q(\theta_0) = 0$. 
%Assume the identifiability condition I.2. 
If $\theta \not= \theta_0$, then 
the distributions of $\bsX_1$ and $\bstildeX_1(\theta)$ are different and thus also their characteristic functions are different. 
%Since $\theta \not= \theta_0$ implies that $\bsX_1$ and $\bstildeX_1(\theta)$ do not have the same characteristic function, so I.3 implies I.1. 
Since characteristic functions are continuous, it follows that they are different at least on an interval with positive Lebesgue measure; hence $Q(\theta) > 0$. 
Therefore, $Q(\theta)$ is uniquely minimized at $\theta_0$ and this fact together with \eqref{eq:Q_n_lim} gives strong consistency of $\thehat$.

\noindent
\textbf{Proof of Theorem~\ref{th:consCV}}: We have that $\wh{\var}(\la t,\bsX_1\ra) = t^T \hat{\Gamma}_p t$, with $\hat{\Gamma}_p$ being the $p$-dimensional empirical covariance matrix of the observed time series $(X_1,\dots,X_T)$ as in \eqref{eq:gamma}.  
Let $k > 0$ be fixed and
\begin{equation*}%\label{qcv}
 Q^{\text{(cv)}}(\theta) =  \intrp \big|  \vp(t,\theta_0)  -  \vp(t,\theta)  \big|^2 \frac{w(t)}{t^T \Gamma_p t } \diff t
\end{equation*}
be the candidate limiting function of $Q^{\text{(cv)}}_{n,H,k}(\theta)$ in \eqref{eq:Qcv}, where $\Gamma_p$ is the theoretical $p$-dimensional covariance matrix of the time series  process $(X_j)_{j \in \Z}$. %$(X_1,\dots,X_T)$.
%This corresponds to taking \eqref{qcv} $w(t)/(t^T \Gamma t )$ as weight function. 

Based on the definition of  $Q^{\text{(cv)}}_{n,H,k}(\theta)$ in \eqref{eq:Qcv}, we divide the domain of integration in the integrated mean squared error $|Q^{\text{(cv)}}_{n,H,k}(\theta)-Q^{\text{(cv)}}(\theta)|$ into 
$\{\wh{\var}(\la t,\bsX_1 \ra) < k\}$ and $\{\wh{\var}(\la t,\bsX_1 \ra) \geq k\}$, equivalently into $L_n = \{t \in \R^p: t^T \hat{\Gamma}_p t < k \}$ and its complement $L_n^c$. 

Recall also \eqref{eq:cv_cf} and \eqref{eq:cvc}.
Using $|e^{ix}| = 1$ for all $x \in \R$, together with $|ab - cd| \leq |b||a-c| + |c||b-d|$ for $a,b,c,d \in \C$ gives for the integral on $L_n^c$:
\beao
& & |Q^{\text{(cv)}}_{n,H,k}(\theta)-Q^{\text{(cv)}}(\theta)|_{L_n^c}  \\
 & & :=  \int_{L_n^c} \Bigg| \bigg| \frac{1}{n} \sum_{j=1}^n e^{i \la t,\mathbf{X}_j \ra } 
 -\frac{1}{H} \sum_{j=1}^H e^{i \la t,\bstildeX_j(\theta) \ra } \bigg|^2 \frac{1}{t^T \hat{\Gamma}_p t}  
 - \big|  \vp(t,\theta_0)  \\
 & & \quad\quad -  \vp(t,\theta)  \big|^2 \frac{1}{t^T \Gamma_p t}  \Bigg| w(t) \diff t %\\ 
\eeao
\begin{equation}\label{eq:Qcv33}
\begin{split}
 \leq & \int_{L_n^c} \Bigg| \bigg| \frac{1}{n} \sum_{j=1}^n e^{i \la t,\mathbf{X}_j \ra } - \frac{1}{H} \sum_{j=1}^H e^{i \la t,\bstildeX_j(\theta) \ra } \bigg|^2 - \big|  \vp(t,\theta_0)  -  \vp(t,\theta)  \big|^2 \Bigg| \frac{1}{t^T \hat{\Gamma}_p t} w(t) \diff t  \\
& +  4 \int_{L_n^c} \bigg| \frac{1}{t^T \hat{\Gamma}_p t} - \frac{1}{t^T \Gamma_p t} \Bigg|  w(t) \diff t.
\end{split}
\end{equation}
{By \ref{as:amixXt} and \ref{as:momX1u} it follows from Theorem~3(a) in Section~1.2.2 of \citet{doukhan94Mixing} that 
\begin{equation}\label{eq:covhzero}
|\cov(X_0,X_j)| \leq 8 \alpha_j^{\frac{1}{r}} 
\big( \E |X_1|^u \big)^{\frac{2}{u}} \rightarrow 0, \quad j \rightarrow \infty.    
\end{equation}
Since $\var(X_1) > 0$, it follows from \eqref{eq:covhzero} combined with Proposition~5.1.1 in \cite{Brockwell13} that $\det(\Gamma_p) > 0$, and therefore,} {the minimum eigenvalue $\lambda_{\min}(\Gamma_p)$ of $\Gamma_p$ is positive}. 
%the spectral representation gives $\Gamma = P \Lambda P$, where $P P^T = I, \Lambda = \text{diag}(\lambda_1,\dots,\lambda_p)$ and all $\lambda_i > 0$. 
Thus, for all $t \in \R^p$,
%, denoting by $\lambda_{\min}(\Gamma)$ the minimal eigenvalue,
\begin{equation}\label{eq:upqf}
\begin{split}
    t^T \Gamma_p t %& = (P^T t)^T \Gamma (P^T t) = \sum_{i=1}^p (P^T t)_{i}^2 \lambda_i \geq 
    %\lambda_{\min}(\Gamma) |P^T t |^2 = (P^T t)^T (P^T t) \lambda_{\min}(\Gamma) \\
    & \ge %\lambda_{\min}(\Gamma)  t^T P P^T t  = 
    \lambda_{\min}(\Gamma_p)\, |t|^2  > 0.
\end{split}
\end{equation}
By \ref{as:ergodXt} and the ergodic theorem $\hat\Gamma_p\stas\Gamma_p$ and, since the eigenvalues of a matrix are continuous functions of its entries (cf. \citet{Bernstein05}, Fact 10.11.2), also  $\lambda_{\min}(\hat{\Gamma}_p) \stas \lambda_{\min}(\Gamma_p)>0$. 
%\CK{why needed? Since the set of positive definite matrices is open,} 
It follows from \eqref{eq:upqf} and from the a.s. convergence of the eigenvalues that there exists $N > 0$ such that
\begin{equation}\label{eq:qfgnlb}
t^T \hat{\Gamma}_p t \geq |t|^2 \lambda_{\min}(\hat{\Gamma}_p) \geq |t|^2 \frac{\lambda_{\min}(\Gamma_p)}{2} > 0, \quad n \geq N.
\end{equation}
Thus, for $t\in L_n^c$ we obtain 
\begin{equation}\label{eq:btgtinv}
\bigg| \frac{1}{t^T \hat{\Gamma}_p t} - \frac{1}{t^T \Gamma_p t} \Bigg| 
\leq \frac{2}{k \lambda_{\min}(\Gamma_p)|t|^2} |t^T (\Gamma_p - \hat\Gamma_p)t|
\le \frac{2 |\Gamma_p - \hat\Gamma_p|}{k \lambda_{\min}(\Gamma_p)}. 
\end{equation}
This together with \eqref{eq:btgtinv} gives the following upper bound for the right-hand side of \eqref{eq:Qcv33}: 
\beam
& & \int_{\R^p} \Bigg| \bigg| \frac{1}{n} \sum_{j=1}^n e^{i \la t,\mathbf{X}_j \ra } - \frac{1}{H} \sum_{j=1}^H e^{i \la t,\bstildeX_j(\theta) \ra } \bigg|^2 - \big|  \vp(t,\theta_0)  -  \vp(t,\theta)  \big|^2 \Bigg|  \frac{w(t)}{k}  \diff t  \nonumber\\
& & \quad\quad +  \frac{{8} |\Gamma_p - \hat\Gamma_p| }{k \lambda_{\min}(\Gamma_p)} \int_{\R^p}  {w(t)}\diff t. \label{eq:Qcv2} %\stas 0, \quad n \rightarrow \infty \\
\eeam
The first integral can be estimated as $|Q_{n,H}(\theta) - Q(\theta)|$ in \eqref{eq:con_2} which tends to 0 uniformly for $\theta\in\Theta$ provided that {\ref{as:intWt}} holds. Since 
$\hat\Gamma_p\stas\Gamma_p$, also the second integral in \eqref{eq:Qcv2} tends 0 a.s. as $n\to\infty$.

We turn to the integrated mean squared error $|Q^{\text{(cv)}}_{n,H,k}(\theta)-Q^{\text{(cv)}}(\theta)|$ on $L_n$. 
Let $L = \{t \in \R^p: |t| \le \sqrt{\frac{2k}{\lambda_{\min}(\Gamma_p)}}\}$. 
The control variates correction used in \eqref{eq:Qcv} can be regarded as a continuous function $g: \R^9 \mapsto \R^2$ whose entries are the arithmetic means defined in \eqref{eq:CVQ1}-\eqref{eq:CVQ4}. 
By \ref{as:EsupX4} and the uniform SLLN, %(see e.g. Theorem~16(a) in \citet{Ferg} or Theorem~9.4 in \citet{Part}) 
%\CK{ergodic theorem} in \CK{define: $C(L \times \Theta)$}, 
each of these arithmetic means converge a.s. uniformly on $L \times \Theta$ as $\nto$ and $H \rightarrow \infty$. 
Thus, it follows from the continuity of $g$ and the continuous mapping theorem that
\begin{equation}\label{eq:supcvz}
\sup_{(t,\theta) \in L \times \Theta} |\kappa_{H}(t,\theta) |^2 \stas 0.
\end{equation}
For $n \geq N$ it follows from \eqref{eq:qfgnlb} that $L_n \subseteq L$ and thus using the \ inequality
\begin{equation*}
\begin{split}
\big| |a+b|^2 c - |d|^2e \big| & \leq \big| |a+b|^2 - |d|^2 \big||c| + |d|^2|c-e| \\
& \leq ( |a-d| + |b|)(4 + |b|)|c| + 4|c-e|,
\end{split}
\end{equation*}

valid for $a,b,c,d,e \in \C$ with $|d| \leq 2$ gives
\begin{equation*}%\label{eq:Qcv3}
\begin{split}
& \int_{L_n} \Bigg| \bigg| \bigg( \frac{1}{n} \sum_{j=1}^n e^{i \la t,\mathbf{X}_j \ra } - \frac{1}{H} \sum_{j=1}^H e^{i \la t,\bstildeX_j(\theta) \ra }\bigg)  +  \kappa_{H}(t,\theta)  \bigg|^2 \frac{1}{t^T \hat{\Gamma}_p t}  \\
&  \quad\quad\quad - \big|  \vp(t,\theta_0)  -  \vp(t,\theta)  \big|^2 \frac{1}{t^T \Gamma_p t} \Bigg| w(t) \diff t \\
 \leq & \int_{L} \Bigg( \bigg| \frac{1}{n} \sum_{j=1}^n e^{i \la t,\mathbf{X}_j \ra } - 
 \vp(t,\theta_0) \bigg| + 
 \bigg| \frac{1}{H} \sum_{j=1}^H e^{i \la t,\bstildeX_j(\theta) \ra } - 
 \vp(t,\theta)  \bigg| \\
 & \quad\quad + |\kappa_{H}(t,\theta)| \bigg) \Big( 4 + |\kappa_{H}(t,\theta)| \Big) \frac{w(t) }{t^T \Gamma_p t} \diff t + 4 \int_{L} \bigg| \frac{1}{t^T \hat{\Gamma}_p t} - \frac{1}{t^T \Gamma_p t} \bigg| w(t) \diff t \\
 =: & I_{1,n}(\theta) + I_{2,n}(\theta).
\end{split}
\end{equation*}
From \eqref{eq:upqf}, \eqref{eq:supcvz}, \eqref{eq:ecfsup}, and \eqref{eq:ecfsup2} with $K_\delta=L$ for $\delta=\sqrt{2k/\lambda_{\min}(\Gamma_p)}$) ,and \ref{as:insuptd} it follows that $\suptheta I_{1,n}(\theta) \stas 0$ as $\nto$. 
%Finally, \eqref{eq:btgtinv}, \eqref{eq:upqf} together with \ref{as:insuptd} gives
Finally, \\ $\suptheta I_{2,n}(\theta) \stas 0$ by similar arguments as used in \eqref{eq:btgtinv} and \eqref{eq:Qcv2},
since for $t\in L$, also applying \ref{as:insuptd},
\begin{equation*}
% \suptheta I_{2,n}(\theta)  \leq \frac{|\hat{\Gamma}_n - \Gamma|}{\lambda^2_{\text{min}}(\Gamma)} \int_{L} \frac{w(t)}{|t|^2} \diff t,\\
\bigg| \frac{1}{t^T \hat{\Gamma}_p t} - \frac{1}{t^T \Gamma_p t} \Bigg| 
\leq \frac{2}{ (\lambda_{\min}(\Gamma_p))^2|t|^4} |t^T (\Gamma_p - \hat\Gamma_p)t|
\le \frac{2 |\Gamma_p - \hat\Gamma_p|}{ (\lambda_{\min}(\Gamma_p))^2|t|^2}\quad
\end{equation*}
and
$$
\int_{\R^p} \frac{w(t)}{|t|^2}\diff t<\infty.
$$

\noindent
\textbf{Proof of Theorem~\ref{th:asn}:}
By the definition of $\thehat$ in \eqref{eq:defThe} and under assumptions \ref{as:compact} and \ref{as:2diffXt} we have
\begin{equation*}%\label{eq:as1}
\gradtheta Q_{n,H}(\thehat)  = 0.
\end{equation*}
A Taylor expansion of order 1 of $\gradtheta Q_{n,H}$ around $\theta_0$ gives
$$
0 =  \gradtheta Q_{n,H}(\theta_0) + \gradtheta^2 Q_{n,H}(\theta_n)(\thehat - \theta_0)
$$
where $\theta_n \stas \theta_0$ as $\nto$. Therefore, asymptotic normality of $\sqrt{n}(\thehat - \theta_0)$ will follow by the delta method, if we prove that as $\nto$:
\begin{enumerate}
\item[$(1)$] $\sqrt{n}\gradtheta Q_{n,H}(\theta_0)$ converges weakly to a multivariate normal random variable, and 
\item[$(2)$] 
$\gradtheta^2 Q_{n,H}(\theta_n)$ converges in probability to a non-singular matrix.
\end{enumerate}

We start with the first point and compute the partial derivatives of $Q_{n,H}$:
\begin{equation}\label{eq:as2} 
\begin{split}
& \quad\quad \frac{\partial}{\partial{\theta^{(i)}}} Q_{n,H}(\theta) =  \frac{\partial}{\partial{\theta^{(i)}}} \bigg( \intrp   |  \vp_n(t) -   \vp_{H}(t,\theta)|^2 w(t) \diff t  \bigg) \\
& = \intrp \frac{\partial}{\partial{\theta^{(i)}}}   \Big( \Re ( \vp_n(t) -  \vp_{H}(t,\theta))^2 + \Im ( \vp_n(t) -  \vp_{H}(t,\theta))^2 \Big) w(t) \diff t \\
& = -2 \intrp     \Big( \Re ( \vp_n(t) -  \vp_{H}(t,\theta)) \frac{\partial}{\partial{\theta^{(i)}}} \Re ( \vp_{H}(t,\theta)) \\ 
& \quad\quad\quad\quad\quad + 
 \Im ( \vp_n(t) -  \vp_{H}(t,\theta)) \frac{\partial}{\partial{\theta^{(i)}}} \Im ( \vp_{H}(t,\theta))  \Big) w(t) \diff t, \quad i \in 1,\dots,q.
% & = \q{maybe remove?}-2 \intrp \bigg[ \Big( \frac{1}{n} \sum_{j=1}^n ( \cos(\la t,\bsX_j \ra) -  \frac{1}{H} \sum_{j=1}^H \frac{1}{n} \sum_{j=1}^n \cos(\la t,\bstildeX^{(h)}_j(\theta) \ra) \Big)  \Big( -\frac{1}{H} \sum_{j=1}^H \frac{1}{n} \sum_{j=1}^n \sin(\la t,\bstildeX^{(h)}_j(\theta) \ra) \la t, \frac{\partial}{\partial{\theta^{(i)}}} \bstildeX^{(h)}_j(\theta) \ra  \Big) \\
% & + \Big( \frac{1}{n} \sum_{j=1}^n ( \sin(\la t,\bsX_j \ra) -  \sin(\la t,\bstildeX_j(\theta) \ra) \Big)  \Big( \frac{1}{n} \sum_{j=1}^n \cos(\la t,\bstildeX^{(h)}_j(\theta) \ra) \la t, \frac{\partial}{\partial{\theta^{(i)}}} \bstildeX^{(h)}_j(\theta) \ra \Big)  \bigg] w(t) \diff t \\
\end{split}
\end{equation}
Recall that  $\vp_n(t)$ and $\vp_H(t,\theta)$ denote the empirical characteristic functions of the observed blocks $(\bsX_1,\dots,\bsX_n)$ as in \eqref{eq:dfepcf} and of its Monte Carlo approximation $(\bstildeX_1(\theta),\dots,\bstildeX_H(\theta))$ as in \eqref{eq:ecfsim}, respectively. Define the partial derivatives of the real and imaginary part of $\vp_{H}(t,\theta)$:
\begin{equation}\label{eq:def_bni}
b_H^{(i)}(t,\theta) = \frac{1}{H} \sum_{j=1}^H 
\begin{pmatrix}
-\sin(\la t,\bstildeX_j(\theta) \ra)  \\
\cos(\la t,\bstildeX_j(\theta) \ra)
\end{pmatrix} \la t, \frac{\partial}{\partial{\theta^{(i)}}} \bstildeX_j(\theta) \ra , \quad i = 1,\dots,q,
\end{equation}
and summarize them into
\begin{equation}\label{eq:def_bn}
b_H(t,\theta) =
\begin{pmatrix}
(b_H^{(1)}(t,\theta))^T  \\ 
\vdots \\
(b_H^{(q)}(t,\theta))^T
\end{pmatrix}.
\end{equation}
Then consider
\begin{equation}\label{eq:def_fn}
\begin{pmatrix}
\Re ( \vp_n(t) - \vp(t,\theta_0) ) \\ 
\Im ( \vp_n(t) - \vp(t,\theta_0) )
\end{pmatrix} - \begin{pmatrix}
\Re ( \vp_H(t,\theta) - \vp(t,\theta_0) ) \\ 
\Im (\vp_H(t,\theta) - \vp(t,\theta_0) )
\end{pmatrix} =: g_n(t) -  \tilde{g}_H(t,\theta).
\end{equation}
Abbreviate $b_H(t):=b_H(t,\theta_0)$ and $\tilde g_H(t) := \tilde g_H(t,\theta_0)$.
Then it follows from  \eqref{eq:as2}, \eqref{eq:def_bn} and \eqref{eq:def_fn} that 
\begin{equation}\label{eq:GratQth}
\gradtheta Q_{n,H}(\theta_0) =  2 \intrp  b_H(t) g_n(t) w(t) \diff t -  2\intrp  b_H(t)\tilde{g}_H(t) w(t) \diff t.
\end{equation}
We analyze the asymptotic behavior of the first term in \eqref{eq:GratQth} in Lemma~\ref{le:FuncCon}. 
More precisely, we show there that $\int_{\kdelta} b_H(t) g_n(t) w(t) \diff t$ for $K_\delta$ as in \eqref{eq:def:kdel} converge in distribution to a $q$-dimensional Gaussian vector. 
Afterwards, Lemmas~\ref{le:FuncCon1} and \ref{le:FuncCon2} show that  as $\delta \rightarrow \infty$, componentwise in $\R^q$,
\begin{equation*}
\limsup_{\nto} \var \Big(  
 \int_{\kdelta^c} b_H(t) \sqrt{n}
 g_n(t)   w(t) \diff t
   \Big) \rightarrow 0,
 \end{equation*}
 and 
$$
\quad\mbox{and}\quad \int_{\kdelta^c}  \E[ b_1(t)] G(t) w(t) \diff t \stp 0
$$
where $G$ is a zero mean $\R^2$-valued Gaussian field. The formula given in \eqref{eq:GratQth} tells us that the term $\E[b_1(t,\theta)]$ will appear in the asymptotic covariance formula of the limiting distribution of the estimator. Therefore it is worth writing it in terms of the chf \eqref{eq:cf}.
\begin{remark}\label{re:rev1}
 For each $i \in \{1,\cdots,q\}$ and $\theta \in \Theta$, it follows from \eqref{eq:def_bni} that
 \begin{equation}\label{eq:rev1}
 \begin{split}
\E[b_1^{(i)}(t,\theta)]  & =  \E \begin{pmatrix}
-\sin(\la t,\bstildeX_j(\theta) \ra)  \\
\cos(\la t,\bstildeX_j(\theta) \ra)
\end{pmatrix} \la t, \frac{\partial}{\partial{\theta^{(i)}}} \bstildeX_j(\theta) \ra  \\ &  =  \E \Big( \frac{\partial}{\partial{\theta^{(i)}}} \cos(\la t,\bstildeX_j(\theta) \ra), \frac{\partial}{\partial{\theta^{(i)}}} \sin(\la t,\bstildeX_j(\theta) \ra)\Big)     
 \end{split}
 \end{equation}
Since both $\sin$ and $\cos$ are bounded by $1$ we can use \ref{as:momSD} to interchange expectation and differentiation in \eqref{eq:rev1}. This combined with \eqref{eq:def_bn} gives
 \begin{equation}\label{eq:rev2}
     \E[b_1(t,\theta)] = \Big( \frac{\partial}{\partial{\theta}}  \E \cos(\la t,\bstildeX_j(\theta) \ra),  \frac{\partial}{\partial{\theta}} \E \sin(\la t,\bstildeX_j(\theta) \ra)\Big) =\Big( \frac{\partial}{\partial{\theta}} \Re(\varphi(t,\theta)), \frac{\partial}{\partial{\theta}} \Im(\varphi(t,\theta)) \Big)
 \end{equation}
\end{remark}
This remark will be used later in the proof of Theorem~\ref{th:asn}.

We show by a standard Chebyshev argument that the second term in \eqref{eq:GratQth} converges in probability componentwise to 0 in \eqref{eq:cheb}. The convergence of the second derivatives $\gradtheta^2 Q_n(\theta_n)$ will be the topic of Lemma~\ref{le:ConvDeri}. For the scalar products above we use the following bounds several times below.

\begin{lemma}\label{lemma}
Let $\nu \geq 1$, $t \in \R^p$, $k,i \in \{1,\dots,q\}$ and $j \in \Z$ be fixed and assume that \ref{as:2diffXt} holds.Then the following bounds hold true.
\begin{itemize}
\item[(a)] If $\E | \gradtheta X_1(\theta)|^\nu < \infty$ for $\theta \in \Theta$, then there exists a constant $c > 0$ such that
\begin{equation}\label{eq:leeq1}
\E \Big| \la t, \diffthetak \bstildeX_j(\theta) \ra\Big|^\nu \leq 
c |t|^\nu \E | \gradtheta X_1(\theta)|^\nu,\quad t\in\R^p.
\end{equation}
\item[(b)] If $\E | \gradtheta^2 X_1(\theta)|^\nu < \infty$ for $\theta \in \Theta$, then there exists a constant $c > 0$ such that
\begin{equation}\label{eq:leeq2}
\E \Big| \la t, \diffthetaki \bstildeX_j(\theta) \ra \Big|^\nu \leq c |t|^\nu 
\E | \gradtheta^2 X_1(\theta)|^\nu,\quad t\in\R^p.
\end{equation}
\end{itemize}
The same bounds hold uniformly, taking expectations over $\suptheta$ or over $\sup_{t\in K}$  for some compact $K \subset \R^p$ at both sides of \eqref{eq:leeq1} and \eqref{eq:leeq2}, provided the corresponding expectations exist. 
\end{lemma}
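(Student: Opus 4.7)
The plan is to prove both bounds by first applying Cauchy--Schwarz in $\R^p$ to pull $|t|$ out of the scalar product, then reducing the resulting vector norm to a sum of scalar gradient norms via norm equivalence on $\R^p$, and finally using stationarity of $(\tildeX_l^{(j)}(\theta))_{l=1}^p$ to replace each term by its common distribution $X_1(\theta)$. Since the assumption \ref{as:2diffXt} secures existence of the derivatives (and the interchange of $\diffthetak$ with the indexing by $l$), there are no conceptual subtleties -- the argument is essentially algebraic.

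For part (a), write $\diffthetak \bstildeX_j(\theta) = (\diffthetak \tildeX_1^{(j)}(\theta),\dots,\diffthetak \tildeX_p^{(j)}(\theta))\in\R^p$. Cauchy--Schwarz gives
\[
\big|\la t, \diffthetak \bstildeX_j(\theta)\ra\big|^\nu \le |t|^\nu \,\big|\diffthetak \bstildeX_j(\theta)\big|^\nu.
\]
Since $|\diffthetak \tildeX_l^{(j)}(\theta)| \le |\gradtheta \tildeX_l^{(j)}(\theta)|$ (the $k$-th coordinate of a vector is dominated by its Euclidean norm), one obtains $|\diffthetak \bstildeX_j(\theta)|^\nu \le \big(\sum_{l=1}^p |\gradtheta \tildeX_l^{(j)}(\theta)|^2\big)^{\nu/2}$. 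By norm equivalence on $\R^p$ (treating the cases $1\le\nu\le 2$ and $\nu\ge 2$ separately: in the former use subadditivity of $x\mapsto x^{\nu/2}$, in the latter apply Jensen/power--mean), there is a constant $c=c(\nu,p)$ with $(\sum_l a_l^2)^{\nu/2}\le c\sum_l a_l^\nu$. Taking expectation, using stationarity of $(X_j)_{j\in\Z}$ (which transfers to $\bstildeX_j(\theta)$ and its derivatives coordinate-wise) to get $\gradtheta \tildeX_l^{(j)}(\theta) \eqd \gradtheta X_1(\theta)$ for each $l$, yields
\[
\E\big|\la t,\diffthetak\bstildeX_j(\theta)\ra\big|^\nu \le c\,p\,|t|^\nu\,\E|\gradtheta X_1(\theta)|^\nu,
\]
which is the desired bound after absorbing $p$ into $c$. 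Part (b) proceeds identically with $\diffthetaki$ in place of $\diffthetak$: the coordinate-wise second derivative $\diffthetaki \tildeX_l^{(j)}(\theta)$ is dominated by $|\gradtheta^2 \tildeX_l^{(j)}(\theta)|$ (an entry of the Hessian matrix bounded by its Frobenius norm), and the same chain of Cauchy--Schwarz, norm equivalence, and stationarity applies.

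For the uniform versions, the only modification is that all manipulations are made pointwise in $\theta$ (or $t$) \emph{before} taking expectations: replacing $|\la t,\diffthetak\bstildeX_j(\theta)\ra|$ by $\suptheta |\la t,\diffthetak\bstildeX_j(\theta)\ra|$ (respectively $\sup_{t\in K}$), then applying Cauchy--Schwarz under the supremum, gives the bound $|t|^\nu \suptheta|\gradtheta \bstildeX_j(\theta)|^\nu$ (respectively $\sup_{t\in K}|t|^\nu$ times the supremum), after which the same Euclidean-norm reduction and stationarity argument as above produces $c|t|^\nu \E\suptheta|\gradtheta X_1(\theta)|^\nu$. The hypothesis that the corresponding uniform expectations are finite (as in \ref{as:momSD}) legitimates exchanging supremum and expectation via monotone estimates.

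The main obstacle, if there is one, is cosmetic rather than substantive: the norm--equivalence constant in $(\sum a_l^2)^{\nu/2}\le c\sum a_l^\nu$ has different origins in the two regimes $\nu\in[1,2]$ and $\nu\ge 2$, so a clean statement that absorbs $p$ into a single constant $c$ should be written carefully. Otherwise the lemma is a straightforward application of Cauchy--Schwarz and stationarity, and requires no deeper ingredient than \ref{as:2diffXt} plus the stated moment hypotheses.
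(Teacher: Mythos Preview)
Your proposal is correct and follows essentially the same approach as the paper: Cauchy--Schwarz to extract $|t|^\nu$, a norm inequality to pass from the $p$-dimensional block derivative to a sum of scalar gradient norms, and stationarity (i.e., $(\bstildeX_j(\theta))\eqd(\bsX_1(\theta))$ as processes in $\theta$) to reduce each coordinate to $\E|\gradtheta X_1(\theta)|^\nu$. The only cosmetic difference is that the paper first bounds the $\ell^2$-norm by the $\ell^1$-norm and then applies the single inequality $|\sum_{l=1}^p \beta_l|^\nu\le p^{\nu-1}\sum_{l=1}^p|\beta_l|^\nu$ (valid for all $\nu\ge 1$), which sidesteps the case split you flagged between $\nu\in[1,2]$ and $\nu\ge 2$.
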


\begin{proof}
(a) Applying the Cauchy-Schwarz inequality for the inner product, the fact that $(\bstildeX_j(\theta), \theta \in \Theta) \eqd (\bstildeX_1(\theta), \theta \in \Theta) \eqd (\bsX_1(\theta), \theta \in \Theta)$, bounding the $L^2$-norm by the $L^1$-norm, employing the inequality $| \sum_{j=1}^p \beta_j| ^{\nu} \leq p^{\nu-1}\sum_{j=1}^p |\beta_j| ^{\nu}$ valid for $\beta_1,\dots,\beta_p \in \R$ and $\nu \geq 1$ gives
\begin{equation}\label{eq:pro_le}
\begin{split}
& \E \Big| \la t, \diffthetak \bstildeX_j(\theta) \ra \Big|^\nu  \leq |t|^\nu \E \Big|  \diffthetak \bstildeX_j(\theta) \Big|^\nu  
= |t|^\nu \E \Big|  \diffthetak \bsX_1(\theta) \Big|^\nu \\
& \leq |t|^\nu  \E \bigg( \sum_{r=1}^p\Big|  \diffthetak X_r(\theta) \Big|\bigg)^\nu \leq p^{\nu-1}  |t|^\nu \sum_{r=1}^p \E \Big|  \diffthetak X_r(\theta) \Big|^\nu 
  \\
 & \leq  p^{\nu-1}  |t|^\nu \sum_{r=1}^p \E |  \gradtheta X_r(\theta) |^\nu = p^{\nu}  |t|^\nu \E |  \gradtheta X_1(\theta) |^\nu =: c|t|^\nu \E |  \gradtheta X_1(\theta) |^\nu.
\end{split}
\end{equation}
Part (b) follows by analogous calculations.
\end{proof}

\begin{lemma}\label{le:FuncCon}
Under assumptions \ref{as:ergodXt}, \ref{as:2diffXt}, \ref{as:amixXt}, \ref{as:momTigh} and \ref{as:mom1Der} we have on the Borel sets of $\R^q$,
\begin{equation}\label{eq:leFunC}
\int_{\kdelta} b_H(t) \sqrt{n}
   g_n(t)  w(t) \diff t \std \int_{\kdelta} \E[ b_1(t)] G(t)  w(t) \diff t, \quad\nto,
\end{equation}
where $G$ is an $\R^2$-valued Gaussian field.
\end{lemma}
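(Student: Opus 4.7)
The plan is to realise \eqref{eq:leFunC} as the image of the pair $(b_H,\sqrt{n}g_n)$ under a continuous functional on a product Banach space and to establish joint weak convergence of this pair. Define
$$
\Phi:C(K_\delta,\R^{q\times 2})\times C(K_\delta,\R^2)\to\R^q,\qquad \Phi(B,f)=\int_{K_\delta}B(t)f(t)\,w(t)\,\diff t.
$$
Compactness of $K_\delta$ together with $\int_{K_\delta}w(t)\diff t<\infty$ under \ref{as:intWt} makes $\Phi$ continuous for the product sup-norm topology, so it suffices to prove $(b_H,\sqrt{n}g_n)\std(\E[b_1(\cdot)],G)$ in the product space; the continuous mapping theorem then yields \eqref{eq:leFunC}.

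For the first coordinate, $b_H(t)$ is the empirical average of $H$ iid copies of a random field which, by \eqref{eq:def_bni}--\eqref{eq:def_bn}, is continuous in $t$. By Lemma~\ref{lemma}(a) with $\nu=1$ together with \ref{as:mom1Der},
$$
\E\sup_{t\in K_\delta}|b_1(t)|\;\leq\;c\,\delta\,\E|\gradtheta X_1(\theta_0)|\;<\;\infty,
$$
so the uniform SLLN on the separable Banach space $C(K_\delta,\R^{q\times 2})$ (Theorem~16(a) of \citet{Ferg}) gives $\sup_{t\in K_\delta}|b_H(t)-\E[b_1(t)]|\stas 0$ since $H=\bar H(n)n\to\infty$. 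Thus the first coordinate converges almost surely to the deterministic limit $\E[b_1(\cdot)]$.

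For the second coordinate I would show $\sqrt{n}g_n\std G$ in $C(K_\delta,\R^2)$ for a centred Gaussian field $G$. Convergence of finite-dimensional distributions follows from the Cram\'er--Wold device combined with the CLT for stationary $\alpha$-mixing sequences applied to the uniformly bounded summands $(\cos\la t_\ell,\bsX_j\ra,\sin\la t_\ell,\bsX_j\ra)_{\ell=1}^m$; the summability $\sum_j\alpha_j^{1/r}<\infty$ in \ref{as:amixXt} is more than sufficient (via Davydov's covariance inequality) to ensure convergent asymptotic covariances and a multivariate Gaussian limit. Tightness in $C(K_\delta,\R^2)$ I would derive from a Kolmogorov-type moment bound on the increments. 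Starting from $|e^{i\la t,x\ra}-e^{i\la s,x\ra}|\leq |t-s|\cdot|x|$ and invoking a Rosenthal/Davydov-type inequality for $\alpha$-mixing partial sums yields an estimate of the form
$$
\E\bigl|\sqrt{n}(g_n(t)-g_n(s))\bigr|^{2\alpha}\;\leq\;c\,|t-s|^{2\alpha}\qquad \text{for some }\alpha\in(u/2,u],
$$
where the constant $c$ is controlled by the joint block moment in \ref{as:momTigh}. Choosing $\alpha>p/2$ gives tightness by the standard Kolmogorov criterion on $K_\delta\subset\R^p$.

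The observed series $(\bsX_j)$ is, by construction, independent of the simulated iid sequence $(\tilde\bsX_j(\theta_0))$ defining $b_H$. Since the first coordinate of $(b_H,\sqrt{n}g_n)$ converges almost surely to a deterministic limit, a direct Slutsky argument lifts the two marginal convergences to joint convergence in the product Banach space, and the continuous mapping theorem applied to $\Phi$ gives \eqref{eq:leFunC}. I expect the main obstacle to be the tightness of $\sqrt{n}g_n$ in $C(K_\delta,\R^2)$: the moment bound on empirical-chf increments has to be established under $\alpha$-mixing rather than independence, and this is precisely where the joint moment hypothesis \ref{as:momTigh} and the mixing rate \ref{as:amixXt} enter in a coordinated way; the finite-dimensional CLT and the uniform SLLN for $b_H$ are comparatively standard.
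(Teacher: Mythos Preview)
Your proposal is correct and follows the same architecture as the paper: uniform SLLN for $b_H$ on $C(K_\delta)$ via the moment bound of Lemma~\ref{lemma}(a), a functional CLT for $\sqrt{n}g_n$ in $C(K_\delta,\R^2)$, Slutsky's theorem (using that the limit of $b_H$ is deterministic), and the continuous mapping theorem applied to the integral functional.

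The only noteworthy difference is that the paper does not develop the functional CLT for $\sqrt{n}g_n$ from scratch; it simply invokes Lemma~4.1(2) of \citet{davis18ADC}, which under \ref{as:amixXt} and \ref{as:momTigh} delivers weak convergence of $\sqrt{n}(\varphi_n(\cdot)-\varphi(\cdot,\theta_0))$ on compact subsets of $\R^p$ to a complex Gaussian field. Your sketch---fdd convergence via Cram\'er--Wold and a mixing CLT, plus tightness from a Kolmogorov moment bound on increments controlled by the joint moment \ref{as:momTigh} and Davydov's inequality---is exactly how that cited lemma is proved, so you have correctly located where \ref{as:momTigh} and \ref{as:amixXt} enter. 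Your route is self-contained but longer; the paper's is a one-line citation. A minor point: you appeal to \ref{as:intWt} for continuity of $\Phi$, but this assumption is not listed in the lemma; on the compact $K_\delta$ local integrability of $w$ suffices, which is implicit throughout.
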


\begin{proof}
Under assumptions \ref{as:amixXt} and \ref{as:momTigh}, it follows from Lemma~4.1(2) in \citet{davis18ADC} that $\sqrt{n}(\vp_n(\cdot) - \vp(\cdot,\theta_0))$ convergences in distribution on compact subsets of $\R^{p}$ to a complex-valued Gaussian field $\tilde G$, equivalently the vector of real and imaginary part converge to a bivariate Gaussian field $G$. 
Since the random elements $(\bstildeX_j(\theta), \theta \in \Theta)_{j \in \N}$ are iid and the partial derivatives exist by \ref{as:2diffXt}, also $(\bstildeX_j(\theta_0), \gradtheta \bstildeX_j(\theta_0) )_{j \in \N}$ are iid.
Then it follows from the definitions \eqref{eq:def_bni}, \eqref{eq:def_bn}, and Lemma~\ref{lemma} with $K = K_\delta)$ in combination with  \ref{as:mom1Der} that 
\beam\label{eq:FuncCon3}
\E \sup_{t \in K_{\delta}} |b_1(t)| \le c \sup_{t \in K_{\delta}} |t| \E |\gradtheta X_1(\theta_0)|\le c |\delta| \E |\gradtheta X_1(\theta_0)| <\infty.
\eeam
Hence, the uniform SLLN guarantees that %on $C(K_{\delta})$ and
\begin{equation*}%\label{eq:bnConv}
\sup_{t \in K_{\delta}} | b_H(t) - \E b_1(t) | \stas 0, \quad \nto.
\end{equation*}
%Now, $\sqrt{n}g_n(\cdot,\theta_0)$ can be written as a continuous mapping $f^\ast$ applied to $\sqrt{n}(\vp_n(\cdot) - \vp(\cdot,\theta_0))$, and since  $\sqrt{n}(\vp_n(\cdot) - \vp(\cdot,\theta_0))$ converges in distribution to $G(\cdot)$ on $C(K_\delta)$, we can apply the continuous mapping theorem to show that 
%Hence, $\sqrt{n}g_n(\cdot,\theta_0)$ converges in distribution to $f^\ast(G(\cdot))$ on $C(K_\delta)$. 
%This, together with the a.s. convergence of $b_H(\cdot)$ to $b_1(\cdot)$ on $C(K_\delta)$ in \eqref{eq:bnConv} and the 
Slutsky's theorem %on $C(K_\delta)$ 
gives then $b_H(\cdot) \sqrt{n}g_n(\cdot,\theta_0)$ convergences in distribution on compact subsets of $\R^{p}$ to $\E[b_1(\cdot)] G(\cdot)$ as $\nto$. 
The result in \eqref{eq:leFunC} follows from the continuity of the integral by another application of the continuous mapping theorem on $C(K_{\delta})$.
%The result is now a consequence of the convergence in distribution of the empirical characteristic function on $K_\delta$ combined with \eqref{eq:bnConv} and the continuous mapping theorem on \CK{define $C(K_{\delta})$.} \\
\end{proof}

\begin{lemma}\label{le:FuncCon1}
Under assumptions \ref{as:2diffXt}, \ref{as:mom1Der} and \ref{as:inttWt} we have componentwise in $\R^q$,
\begin{equation}\label{eq:}
\limsup_{\nto} \var \bigg(  
 \int_{\kdelta^c} b_H(t) \sqrt{n}
 g_n(t)   w(t) \diff t
   \bigg) \rightarrow 0, \quad  \delta \rightarrow \infty.
\end{equation} 
\end{lemma}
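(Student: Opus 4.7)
Write $V_{n,i}^{(\de)} := \int_{\kdelta^c} (b_H^{(i)}(t))^T \sqrt n\, g_n(t)\, w(t)\,\diff t$ for $i=1,\dots,q$. Since the simulated blocks $(\bstildeX_j(\theta_0))_{j=1}^H$ are independent of the data $(\bsX_j)_{j=1}^n$ and $\E g_n(t)=0$, one has $\E V_{n,i}^{(\de)}=0$, so $\var(V_{n,i}^{(\de)})=\E (V_{n,i}^{(\de)})^2$. Using the Cauchy--Schwarz bound $|(b_H^{(i)}(t))^T g_n(t)|\le|b_H^{(i)}(t)|\,|g_n(t)|$, Fubini, and the independence between the $\bstildeX$-based and $\bsX$-based factors, I get
\begin{equation*}
\E (V_{n,i}^{(\de)})^2 \le \int_{\kdelta^c}\!\!\int_{\kdelta^c}\E\bigl[|b_H^{(i)}(t)|\,|b_H^{(i)}(s)|\bigr]\cdot n\,\E\bigl[|g_n(t)|\,|g_n(s)|\bigr]\,w(t)w(s)\,\diff t\,\diff s.
\end{equation*}

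For the $b_H$-factor I would apply Cauchy--Schwarz to reduce to $\E|b_H^{(i)}(t)|^2$; since $b_H^{(i)}(t)$ is an iid average of the summands $v_j(t)$ appearing in \eqref{eq:def_bni}, Jensen gives $\E|b_H^{(i)}(t)|^2\le \E|v_1(t)|^2$, and Lemma~\ref{lemma}(a) with $\nu=2$ combined with \ref{as:momSD} then yields $\E|v_1(t)|^2\le c|t|^2$. Hence $\E[|b_H^{(i)}(t)||b_H^{(i)}(s)|]\le c|t||s|$. For the $g_n$-factor, Cauchy--Schwarz reduces the task to a uniform-in-$t$ bound on $\var(\vp_n(t))$. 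Expanding
\begin{equation*}
n\,\var(\vp_n(t)) = \sum_{|h|<n}\Bigl(1-\tfrac{|h|}{n}\Bigr)\cov\bigl(e^{i\la t,\bsX_0\ra},e^{i\la t,\bsX_h\ra}\bigr),
\end{equation*}
I would apply Davydov's covariance inequality to the uniformly bounded kernels $e^{i\la t,\cdot\ra}$: each covariance is bounded by $c\,\alpha_{|h|}$, and $\sum_h\alpha_h<\infty$ follows from \ref{as:amixXt} (since $\alpha_h\in[0,1]$ and $r>1$ give $\alpha_h\le\alpha_h^{1/r}$). This delivers $\sup_{t,n} n\,\E|g_n(t)|^2\le c$, and thus $n\,\E[|g_n(t)||g_n(s)|]\le c$ by Cauchy--Schwarz.

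Combining the two factor bounds yields
\begin{equation*}
\E (V_{n,i}^{(\de)})^2 \le c\,\Bigl(\int_{\kdelta^c}|t|\,w(t)\,\diff t\Bigr)^2,
\end{equation*}
which is independent of $n$ and tends to $0$ as $\de\to\infty$ by \ref{as:inttWt}, establishing the claim. The main technical obstacle I expect is the uniform variance bound $\sup_{t,n} n\,\var(\vp_n(t))<\infty$: it rests on the happy interaction of the $\alpha$-mixing rate \ref{as:amixXt} with the fact that the kernels $e^{i\la t,\cdot\ra}$ have $L^\infty$-norm equal to $1$, so that Davydov's inequality furnishes a bound independent of $t\in\R^p$. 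The remainder is routine Cauchy--Schwarz combined with Lemma~\ref{lemma}.
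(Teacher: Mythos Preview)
Your argument is correct and follows essentially the same route as the paper: both proofs exploit the independence of $b_H(\cdot)$ and $g_n(\cdot)$ to factor the expectation, both obtain a uniform-in-$t$ bound $\sup_{t,n} n\,\E|g_n(t)|^2\le c$ from the $\alpha$-mixing assumption applied to the bounded kernels $e^{i\la t,\cdot\ra}$ (the paper invokes Doukhan's covariance inequality, you invoke Davydov---the same estimate), and both control the $b_H$-factor through Lemma~\ref{lemma}(a). The only organizational difference is that the paper first applies the Cauchy--Schwarz inequality for integrals to split off a factor $\int_{K_\delta^c}w(t)\,\diff t$ and then decomposes $b_H=(b_H-\E v_1)+\E v_1$, using $\bar H(n)\to\infty$ to kill the fluctuation piece; you instead expand the square as a double integral and bound $\E|b_H^{(i)}(t)|^2\le\E|v_1(t)|^2$ directly via Jensen, which is slightly cleaner and does not require $\bar H(n)\to\infty$. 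Your final bound $c\bigl(\int_{K_\delta^c}|t|\,w(t)\,\diff t\bigr)^2$ uses precisely \ref{as:inttWt}, whereas the paper's version ends up needing \ref{as:intWt} and \ref{as:intt2Wt}. Note also that both proofs actually require $\E|\gradtheta X_1(\theta_0)|^2<\infty$ (you correctly cite \ref{as:momSD}; the paper uses it in \eqref{eq:bofn2}), even though the lemma statement lists only \ref{as:mom1Der}.
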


\begin{proof}
Since $b_H(\cdot)$ and $g_n(\cdot)$ are independent and $\E g_n(t) = 0$, we have $\E[ b_H(t) g_n(t)] = 0$ for all $t\in\R^p$. 
An application of the Cauchy-Schwartz inequality for integrals gives
\beam
& & \quad\quad \var \bigg(  
 \int_{\kdelta^c}b_H(t) \sqrt{n}
 g_n(t)w(t) \diff t
   \bigg)  = \E \bigg(  
 \int_{\kdelta^c} 
b_H(t) \sqrt{n} g_n(t)   w(t) \diff t
   \bigg)^2 \nonumber\\
   & & \leq \bigg( \E \int_{\kdelta^c}|b_H(t)|^2  n  |g_n(t)|^2  w(t) \diff t \bigg) \bigg( \int_{\kdelta^c} w(t) \diff t \bigg) \label{eq:varIb}.
\eeam
We first obtain a bound for the product between the first component $g_{n,1}(\cdot)$ of $g_n(\cdot)$ and the first component $b_{H,1}^{(i)}(\cdot)$ of $b_H^{(i)}(\cdot)$.
Define for $t\in\R^p$
%\begin{equation}
\beam
U_j(t) & = & \cos(\la t,\bsX_j \ra) - \Re(\vp(t,\theta_0)) \nonumber\\
 V_j(t) & = & -\sin(\la t,\bstildeX_j(\theta_0) \ra) \la t,\diffthetai \bstildeX_j(\theta_0) \ra,\quad j\in\Z\label{eq:def:UandV}.
\eeam
Then, 
$${g_{n,1}(t)}= \frac1n\sum_{j=1}^n U_j(t)\quad\mbox{and}\quad b_{H,1}^{(i)}(t) = \frac1H\sum_{j=1}^H V_j(t),\quad t\in\R^p.$$
Under \ref{as:amixXt} it follows from Theorem~3(a) in Section~1.2.2 of \citet{doukhan94Mixing} that for fixed $t$,
\begin{equation}\label{eq:ineqCov}
|\cov(U_0(t),U_j(t))| \leq 8 \alpha_j^{\frac{1}{r}} 
\big( \E |U_0(t)|^u \big)^{\frac{2}{u}},\quad j\in\N,
\end{equation}
where $u = \frac{2r}{(r-1)}$ and, thus, it follows from the stationarity of $(U_j(t))_{j \in \N}$ combined with \eqref{eq:ineqCov} and the fact that $|U_0(t)| \leq 2$  that 
\begin{equation}\label{eq:bound_fn}
\begin{split}
n \E \Big|\frac{1}{n} \sum_{j=1}^n U_j(t) \Big|^2 & =
\frac{1}{n}\sum_{j=1}^n \E U_j^2(t) + \frac2{n} \sum_{j=1}^{n-1} \Big(1 - \frac{k}{n}\Big) \E | U_0(t)U_j(t)|\\
& \leq \E U_0^2(t) + 16 \big( \E |U_0(t)|^u \big)^{\frac{2}{u}}  \sum_{j=1}^{\infty}  \alpha_j^{1/r} \\
& \leq  4 + 64 \sum_{j=1}^{\infty}  \alpha_j^{1/r} < \infty,
\end{split}
\end{equation}
where the bound is independent of $t$.
Recall that $H = H(n)=\bar{H} (n) n$. 
Under \ref{as:mom1Der}, it follows from the iid property of $(V_j(t))_{j \in \N}$
\begin{equation}\label{eq:bofn2}
\begin{split}
& n \E \Big|\frac{1}{H} \sum_{j=1}^H V_j(t) - \E V_0(t) \Big|^2  = n \var \bigg( \frac{1}{\bar{H} n} \sum_{j=1}^{\bar{H} n} V_j(t) \bigg) \\
& =  \frac{\E V_1^2(t)}{\bar{H} (n)}   \leq \frac{c |t|^2 \E |\gradtheta X_1(\theta_0)|^2}{\bar{H} (n)} \leq \frac{c |t|^2}{\bar{H} (n)}.
\end{split}
\end{equation}
Using the fact that $\Big|\frac{1}{n} \sum_{j=1}^n U_j (t)\Big| \leq 2$, adding and subtracting $\E V_0(t)$ with the inequality $|a+b|^2 \leq 2(|a|^2 + |b|^2)$, and \eqref{eq:bofn2} gives
\begin{equation}\label{eq:bofn3}
\begin{split}
 & n \E \Big|\frac{1}{n} \sum_{j=1}^n U_j(t) \Big|^2 \Big|\frac{1}{H} \sum_{j=1}^H V_j(t) \Big|^2 \\
%= & n \E \Big|\frac{1}{n} \sum_{j=1}^n U_j \Big|^2 \Big|\frac{1}{n} \sum_{j=1}^n V_j -\E V_0 + \E V_0\Big|^2 \\
& \leq 2 n \E \Big|\frac{1}{n} \sum_{j=1}^n U_j(t) \Big|^2  (\E V_0(t))^2 + 
8 n \E \Big|\frac{1}{H} \sum_{j=1}^H V_j(t) - \E V_0(t) \Big|^2 \\
& \leq c \Big(1+ \frac{|t|^2}{\bar{H} (n)}\Big).
\end{split}
\end{equation}
The calculations in \eqref{eq:bound_fn}, \eqref{eq:bofn2}, and \eqref{eq:bofn3}  can now be applied to show that for all $n \in \N$,
\begin{equation*}%\label{eq:bnEfnbn}
n \E |g_n(t)|^2  |b_H(t)|^2 \leq c \Big(1+\frac{|t|^2}{\bar{H} (n)}\Big)
\end{equation*}
and, thus, it follows from \eqref{eq:varIb} together with \ref{as:intWt} and \ref{as:intt2Wt} that  
\begin{equation}\label{eq:limsupvz}
\begin{split}
& \limsup_{\nto} \var \bigg(  
 \int_{\kdelta^c}b_H(t) \sqrt{n}
 g_n(t)w(t) \diff t
   \bigg) \\
 &  \leq  \limsup_{\nto} \frac{c}{\bar{H} (n)}  \int_{\kdelta^c} (1+ |t|^2) w(t) \diff t  \int_{\kdelta^c}  w(t) \diff t \to 0, \quad \delta \rightarrow \infty.
 \end{split}
\end{equation}
\end{proof}

\begin{lemma}\label{le:FuncCon2}
Under assumptions \ref{as:2diffXt}, \ref{as:inttWt} and \ref{as:mom1Der}
\begin{equation*}
 \int_{\kdelta^c}  \E [b_1(t)] G(t) w(t) \diff t \stp 0, \quad \deltato.
\end{equation*}
\end{lemma}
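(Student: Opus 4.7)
The plan is to reduce the claim to an $L^1$-bound and then invoke Markov's inequality. Set
\[
Y_\delta := \int_{K_\delta^c} \E[b_1(t)]\, G(t)\, w(t)\, \diff t,
\]
which is a $q$-dimensional random vector (the product $\E[b_1(t)]G(t)$ is $\R^q$-valued by \eqref{eq:def_bn}). It suffices to show $\E|Y_\delta|\to 0$ as $\delta\to\infty$, componentwise.

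The first step is to control $\E[b_1(t)]$. By Remark~\ref{re:rev1}, $\E[b_1(t)]$ equals the matrix of partial derivatives of the real and imaginary parts of $\vp(t,\theta_0)$. Applying Lemma~\ref{lemma}(a) with $\nu=1$ componentwise to the definition \eqref{eq:def_bni} of $b_1^{(i)}(t,\theta_0)$, and using $|{-}\sin|\le 1$, $|\cos|\le 1$ together with the moment bound \ref{as:mom1Der}, yields
\[
|\E[b_1(t)]| \;\leq\; c\,|t|\,\E|\gradtheta X_1(\theta_0)| \;\leq\; c'\,|t|, \qquad t\in\R^p.
\]

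The second step is a uniform bound on $\E|G(t)|$. Recall from the proof of Lemma~\ref{le:FuncCon} that $G$ is the weak limit on compacta of the bivariate field $\sqrt{n}(\Re\vp_n(\cdot)-\Re\vp(\cdot,\theta_0),\,\Im\vp_n(\cdot)-\Im\vp(\cdot,\theta_0))$. The variance computation carried out in \eqref{eq:bound_fn} is uniform in $t$, since it relies only on $|\cos|,|\sin|\le 1$ and the $\alpha$-mixing series $\sum_j\alpha_j^{1/r}<\infty$ guaranteed by \ref{as:amixXt}. Passing to the limit, $\sup_{t\in\R^p}\E|G(t)|^2<\infty$, hence $\sup_{t\in\R^p}\E|G(t)|\leq C$ for some finite $C$.

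In the final step, Fubini combined with the two bounds above gives
\[
\E|Y_\delta| \;\leq\; \int_{K_\delta^c} |\E[b_1(t)]|\,\E|G(t)|\,w(t)\,\diff t \;\leq\; c'C \int_{K_\delta^c} |t|\,w(t)\,\diff t.
\]
By \ref{as:inttWt}, $|t|w(t)$ is integrable on $\R^p$, so dominated convergence forces the right-hand side to $0$ as $\delta\to\infty$. Markov's inequality then delivers $Y_\delta\stp 0$.

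The only non-routine point is the uniform bound on $\E|G(t)|$: it is not visible from the statement of Lemma~\ref{le:FuncCon} that the limit field has uniformly bounded second moments, but this is immediate once one revisits the prelimit variance calculation in \eqref{eq:bound_fn}, whose constants do not depend on $t$.
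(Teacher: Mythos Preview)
Your proof is correct and follows the same route as the paper: bound $|\E[b_1(t)]|\le c|t|$ via Lemma~\ref{lemma} and \ref{as:mom1Der}, establish a uniform-in-$t$ bound on $\E|G(t)|$ from the $\alpha$-mixing covariance estimates, and conclude $L^1$-convergence (hence convergence in probability) using \ref{as:inttWt}. The only cosmetic difference is that the paper identifies $\var G(t)$ explicitly via Ibragimov's CLT and then bounds the resulting series with Doukhan's inequality, whereas you pass to the limit from the uniform prelimit bound \eqref{eq:bound_fn}---which is legitimate by lower semicontinuity of second moments under weak convergence (Portmanteau/Fatou).
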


\begin{proof}
It follows from \eqref{eq:def_bni}, \eqref{eq:def_bn}, \ref{as:mom1Der}, and \eqref{eq:FuncCon3}  
%the Cauchy-Schwarz inequality that
%\beao%\label{eq:FuncCon3}
   \\$ \E |b_1(t)| % & \leq c \sum_{i=1}^q \E |b^{(i)}_1(t)| \\
   % &  \leq & c \sum_{i=1}^q \E \bigg[ \Big|\cos(\la t,\bsX_1(\theta_0) \ra) \la t, \frac{\partial}{\partial{\theta^{(i)}}} \bsX_1(\theta_0) \ra \Big|  + \Big|\sin(\la t,\bsX_1(\theta_0) \ra) \la t, \frac{\partial}{\partial{\theta^{(i)}}} \bsX_1(\theta_0) \ra \Big| \bigg] \nonumber\\
%    & \leq & c \sum_{i=1}^q \E \Big| \la t, \frac{\partial}{\partial{\theta^{(i)}}} \bsX_1(\theta_0) \ra \Big|  
    \leq c |t| \E  |\gradtheta X_1(\theta_0)| < \infty.$
%\eeao
%where the right-hand side of \eqref{eq:FuncCon3} is finite by \ref{as:mom1Der}.
Now we find an upper bound for the variance of each component of $G(t)$ for a fixed $t$. 
Let $U_j(t)$ be as defined at the left-hand side of \eqref{eq:def:UandV} and notice that the first component of $G(t)$ is the distributional limit of $\frac{1}{\sqrt{n}} \sum_{j=1}^n U_j(t)$. 
Since $(U_j(t))_{j\in\N}$ is $\alpha$-mixing by \ref{as:amixXt}, we can apply the CLT in \cite{Ibragimov71} (Theorem~18.5.3 with $\delta = 2/(r-1)$) and find that the variance of the first component of $G(t)$ is given by
\begin{equation*}
    \sigma^2_U = \E[ U_0^2(t)] + 2 \sum_{j=1}^\infty \E [U_0(t) U_j(t)].
\end{equation*}
This combined with Theorem~3(a) in Section~1.2.2 of \cite{doukhan94Mixing} and the fact that $\E U_j(t) = 0$ and $|U_j(t)| \leq 2$ for all $j \in \N$ gives by \ref{as:amixXt} and \eqref{eq:ineqCov}
\begin{equation*}
\begin{split}
| \sigma^2_U| & \leq 4 + \sum_{j=1}^ \infty |\cov (U_0(t), U_j(t))| \leq 4 + 8 \sum_{j=1}^ \infty  (2 \alpha_j)^{1/r} \big( \E |U_0(t)|^u \big)^{\frac{2}{u}} \leq  \\
& \quad\quad\quad 4 + 64 \sum_{j=1}^ \infty  (2 \alpha_j)^{1/r}.
\end{split}
\end{equation*}
A similar calculation shows that the variance of the second component of $G(t)$ is also bounded by a finite constant, which does not depend on $t$. 
Therefore, $\E|G(t)| \leq c$. 
This combined with \eqref{eq:FuncCon3} and assumption \ref{as:inttWt} gives
\begin{equation*}
\E \Big| \int_{\kdelta^c}  \E [b_1(t)] G(t) w(t) \diff t \Big| \leq c \E  |\gradtheta X_1(\theta_0)|   \int_{\kdelta^c} |t| w(t)  \diff t \rightarrow 0, \quad \deltato.
\end{equation*}
Since $L^1$-convergence implies convergence in probability the result follows.
\end{proof}
This proves part (1) of the delta method. We now turn to part (2). In order to calculate the second derivatives of $Q_{n,H}(\theta)$, which exist by  \ref{as:2diffXt}, we rewrite \eqref{eq:as2} as 
\beao
& & \frac{\partial}{\partial\theta^{(i)}} Q_{n,H}(\theta) \\ 
& & = - 2\int_{\R^d} \Big\{ \Big(\frac1n \sum_{j=1}^n \cos(\la t,\bsX_j(\theta)\ra ) -  \frac1H \sum_{j=1}^H \cos(\la t,\tilde \bsX_j(\theta)\ra ) \Big)
\frac{\partial}{\partial\theta^{(i)}} \Re(\vp_H(t,\theta))\\
& & +  \Big(\frac1n \sum_{j=1}^n \sin(\la t,\bsX_j(\theta)\ra ) -\frac1H \sum_{j=1}^H \sin(\la t,\tilde \bsX_j(\theta)\ra ) \Big)
\frac{\partial}{\partial\theta^{(i)}} \Im(\vp_H(t,\theta))\Big\} w(t) \diff t\\
& & =: 2 \int_{\R^d} \Big\{ i_{n,H}(t,\theta) j_{H,i}(t,\theta) - k_{n,H}(t,\theta) l_{H,i}(t,\theta)\Big\} w(t) \diff t.
\eeao
For the second derivatives we calculate for every $i,k\in\{1,\dots,q\}$,
\begin{equation}\label{eq:Qn2}
\begin{split}
\diffthetaki Q_{{n,H}}(\theta) & =  {2} \intrp \Big\{ {j_{H,k}}(t,\theta) {j_{H,i}}(t,\theta) + {i_{n,H}}(t,\theta) {g_{H,k,i}}(t,\theta) \\
& + {l_{H,k}}(t,\theta) {l_{H,i}}(t,\theta) - {k_{n,H}}(t,\theta) {h_{H,k,i}}(t,\theta) \Big\} w(t) \diff t,
\end{split}
\end{equation}
where we summarize all quantities used in the following list:
%Under \eqref{as:2diffXt}, the map $\theta \mapsto X_j(\theta)$ is twice differentiable, and therefore for each $i,k \in \{1,\dots,q\}$ the following sequences are well defined:
\beao
 {i_{n,H}}(t,\theta) & = & \frac{1}{n} \sum_{j=1}^n  \cos(\la t,\bsX_j \ra) -  \frac{1}{H} \sum_{j=1}^H \cos(\la t,\bstildeX_j(\theta) \ra) \label{eq:defin} \\
% i^\ast_n(t,\theta)  & = & \frac{1}{n} \sum_{j=1}^n \cos(\la t,\bsX_j \ra) -  \Re(\vp(t,\theta)) \q{\text{NOT needed}}\label{eq:defins}\\
{j_{H,i}}(t,\theta)  & = &   \diffthetai {i_{n,H}}(t,\theta) =\frac{1}{H} \sum_{j=1}^H \sin(\la t,\bstildeX_j(\theta) \ra) \la t, \diffthetai \bstildeX_j(\theta) \ra \label{eq:defjn} \\
{g_{H,k,i}}(t,\theta) & = & \diffthetak {j_{H,i}}(t,\theta) \\
& = & \frac{1}{H} \sum_{j=1}^H \cos(\la t,\bstildeX_j(\theta) \ra) \la t, \diffthetak \bstildeX_j(\theta) \ra  \la t, \diffthetai \bstildeX_j(\theta) \ra  \nonumber\\
& &    \quad\quad\quad\quad + \sin(\la t,\bstildeX_j(\theta) \ra) \la t, \diffthetaki \bstildeX_j(\theta) \ra \label{eq:defgn} %\\
\eeao
\beao
{k_{n,H}}(t,\theta)  & = & \frac{1}{n} \sum_{j=1}^n \sin(\la t,\bsX_j \ra) -  \frac{1}{H} \sum_{j=1}^H \sin(\la t,\bstildeX_j(\theta) \ra)\label{eq:defkn} \\
%k^\ast_n(t,\theta)  & = & \frac{1}{n} \sum_{j=1}^n \sin(\la t,\bsX_j \ra) -  \Im(\vp(t,\theta))\q{\text{NOT needed}}\label{eq:defkns}\\
{l_{H,i}}(t,\theta) & = &   - \diffthetai {k_{n,H}}(t,\theta) = \frac{1}{H} \sum_{j=1}^H \cos(\la t,\bstildeX_j(\theta) \ra) \la t, \diffthetai \bstildeX_j(\theta) \ra \label{eq:defln} \\
{h_{H,k,i}}(t,\theta) & = & \diffthetak {l_{H,i}}(t,\theta) \nonumber\\
& = & \frac{1}{H} \sum_{j=1}^H -\sin(\la t,\bstildeX_j(\theta) \ra) \la t, \frac{\partial}{\partial{\theta^{(k)}}} \bstildeX_j(\theta) \ra   \la t, \diffthetai \bstildeX_j(\theta) \ra  \nonumber\\
 &  &   \quad\quad\quad\quad + \cos(\la t,\bstildeX_j(\theta) \ra) \la t, \diffthetaki \bstildeX_j(\theta) \ra. \label{eq:defhn}
\eeao
%where $H =n \bar{H} (n)$. 

\begin{lemma}\label{le:ConvDeri}
If the assumptions \ref{as:ergodXt}, \ref{as:contXt}, \ref{as:2diffXt}, \ref{as:momSD}, \ref{as:intt2Wt} hold and $(\theta_n)_{n\in\N}\subset\Theta$ satisfying 
%is an arbitrary sequence of random variables in $\Theta$ such that 
$\theta_n \stas \theta_0$, then for every $k,i \in \{1,\dots,q\}$, {as $\nto$}
\begin{equation}\label{eq:diffThCov}
\begin{split}
 & \diffthetaki Q_{n,H}(\theta_n) 
  \\
 &  \stp \intrp \Big( \E j_{1,k}(t,\theta_0) \E j_{1,i}(t,\theta_0) + \E l_{1,k}(t,\theta_0) \E l_{1,i}(t,\theta_0) \Big) w(t) \diff t.
\end{split}
\end{equation}
%\ts{\begin{equation}\label{eq:diffThCov}
%\begin{split}
% & \diffthetaki Q_{n,H}(\theta_n) \stp \\ 
% & 2 \intrp \bigg\{  \Big( \E  \sin(\la t,\bsX_1(\theta_0) \ra) \la t, \diffthetak \bsX_1(\theta_0) \ra \Big)\Big( \E  \sin(\la t,\bsX_1(\theta_0) \ra) \la t, \diffthetai \bsX_1(\theta_0) \ra \Big) \\
% & +\Big( \E  \cos(\la t,\bsX_1(\theta_0) \ra) \la t, \diffthetak \bsX_1(\theta_0) \ra \Big)\Big( \E  \cos(\la t,\bsX_1(\theta_0) \ra) \la t, \diffthetai \bsX_1(\theta_0) \ra \Big) \bigg\} w(t) \diff t, \quad \nto.
%\end{split}
%\end{equation}
%}
\end{lemma}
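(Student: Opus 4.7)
\textbf{Proof plan for Lemma~\ref{le:ConvDeri}.}
The idea is to pass to the limit inside the integral \eqref{eq:Qn2}. I will treat the four summands separately. First I observe that $i_{n,H}(\cdot,\theta)$ and $k_{n,H}(\cdot,\theta)$ are differences between empirical chfs. Under \ref{as:ergodXt}, the empirical chf of $(\bsX_j)$ converges uniformly on any compact $K_\delta$ (as in \eqref{eq:def:kdel}) to $\Re\vp(t,\theta_0)$ (respectively $\Im\vp(t,\theta_0)$); under \ref{as:contXt} and the uniform SLLN in $C(K_\delta \times \Theta)$ applied to the iid sample $\bstildeX_j(\theta)$, the simulated pieces converge uniformly to $\Re\vp(t,\theta)$ and $\Im\vp(t,\theta)$. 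Since $\theta\mapsto\vp(t,\theta)$ is continuous on $\Theta$ by \ref{as:contXt} and bounded convergence, the a.s.\ convergence $\theta_n\stas\theta_0$ yields $\sup_{t\in K_\delta}|i_{n,H}(t,\theta_n)|\stas 0$ and $\sup_{t\in K_\delta}|k_{n,H}(t,\theta_n)|\stas 0$.

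Second, I apply the uniform SLLN on $C(K_\delta\times\Theta)$ to the iid averages $j_{H,i}$, $l_{H,i}$, $g_{H,k,i}$, $h_{H,k,i}$. Integrability of the suprema over $K_\delta\times\Theta$ of the summands follows from Lemma~\ref{lemma} together with \ref{as:momSD} (which in particular implies $\E\suptheta|\gradtheta X_1(\theta)|<\infty$ and $\E\suptheta|\gradtheta^2 X_1(\theta)|<\infty$). Using again \ref{as:contXt}, \ref{as:2diffXt} and dominated convergence, the limits $\theta\mapsto\E j_{1,i}(t,\theta)$, $\E l_{1,i}(t,\theta)$, etc.\ are continuous in $\theta$, so that substituting $\theta_n\stas\theta_0$ gives, uniformly in $t\in K_\delta$,
\[
j_{H,i}(t,\theta_n)\stas\E j_{1,i}(t,\theta_0), \quad l_{H,i}(t,\theta_n)\stas\E l_{1,i}(t,\theta_0),
\]
and analogous statements for $g_{H,k,i}$ and $h_{H,k,i}$. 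Combined with the first step, the integrand of \eqref{eq:Qn2} converges a.s.\ uniformly on $K_\delta$ to the integrand of the stated right-hand side (the cross terms involving $i_{n,H}$ and $k_{n,H}$ disappear in the limit).

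Third, the main obstacle is pushing the convergence from $K_\delta$ to $\R^p$ to deal with the fact that the integrand is not bounded. I would do this via a tightness/uniform integrability argument on the tail $K_\delta^c$. Applying Lemma~\ref{lemma} with $\nu=2$ (using \ref{as:momSD}, which provides $2(1+\eps)$ moments for $\gradtheta X_1$ and $1+\eps$ moments for $\gradtheta^2 X_1$), I obtain
\[
\E\suptheta|j_{H,i}(t,\theta)|^2 \le c|t|^2, \qquad \E\suptheta|l_{H,i}(t,\theta)|^2 \le c|t|^2,
\]
and, by Cauchy--Schwarz together with $|i_{n,H}|, |k_{n,H}|\le 2$,
\[
\E\suptheta|i_{n,H}(t,\theta)\, g_{H,k,i}(t,\theta)| \le c(|t|^2+|t|), \quad \E\suptheta|k_{n,H}(t,\theta)\, h_{H,k,i}(t,\theta)| \le c(|t|^2+|t|).
\]
Under \ref{as:intt2Wt} these bounds are $w(t)$-integrable, so by Markov's inequality the contribution to $\diffthetaki Q_{n,H}(\theta_n)$ from $K_\delta^c$ can be made arbitrarily small in probability, uniformly in $n$, by taking $\delta$ large.

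Finally, combining the uniform a.s.\ convergence on $K_\delta$ with the uniform-in-$n$ tail estimate on $K_\delta^c$ and letting $\delta\to\infty$ yields convergence in probability of $\diffthetaki Q_{n,H}(\theta_n)$ to the integral on the right-hand side of \eqref{eq:diffThCov}. The delicate points are (i) ensuring that the $\suptheta$ can be moved inside the expectations, which is exactly where \ref{as:momSD} is tailored to fit Lemma~\ref{lemma}, and (ii) verifying that the continuity of the limits in $\theta$ lets us substitute $\theta_n$ for $\theta$ uniformly in $t\in K_\delta$.
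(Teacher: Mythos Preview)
Your argument is correct and reaches the same conclusion, but the route differs from the paper's. The paper does not split $\R^p$ into $K_\delta$ and $K_\delta^c$. Instead, it works pointwise in $t$: for each fixed $t$ it applies the uniform SLLN over $\Theta$ to obtain $\suptheta|i_{n,H}(t,\theta)g_{H,k,i}(t,\theta)-\E i_{1,1}(t,\theta)\E g_{1,k,i}(t,\theta)|\stas 0$, then uses continuity in $\theta$ to substitute $\theta_n$. To pass to the integral it upgrades the pointwise a.s.\ convergence to $L^1$-convergence via a uniform integrability bound of the form $\sup_n \E|i_{n,H}(t,\theta_n)g_{H,k,i}(t,\theta_n)|^{1+\eps}\le c(|t|^{2(1+\eps)}+|t|^{1+\eps})$ (this is where the extra $\eps$ in \ref{as:momSD} and \ref{as:intt2Wt} is actually used), and then applies dominated convergence in $t$ to $v_n(t)=\E|\cdots|$. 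Your compact-exhaustion approach replaces this $L^{1+\eps}$ uniform integrability step by a direct $L^1$ tail estimate on $K_\delta^c$ combined with uniform a.s.\ convergence on $K_\delta$; it is arguably more elementary and, as written, only needs second moments of $\gradtheta X_1$ and first moments of $\gradtheta^2 X_1$ rather than the $1+\eps$ exponents. The trade-off is that you must verify the uniform SLLN on the larger index set $K_\delta\times\Theta$, whereas the paper only needs it over $\Theta$ for fixed $t$. Both arguments are sound; the paper's is more compact, yours slightly less demanding on moments.
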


\begin{proof}
We first prove {that as $\nto$}
\begin{equation}\label{eq:ingnint}
\intrp  {i_{n,H}}(t,{\theta_n}) {g_{H,k,i}}(t,{\theta_n}) w(t) \diff t \, \stp \, \intrp \E{i_{1,1}}(\theta_0,t)\E g_{1,k,i}(\theta_0,t) w(t) \diff t.
\end{equation}
% \textbf{Step 1: pointwise convergence for fixed $\theta \in \Theta$}: Let $\theta \in \Theta$ be fixed. Since \ref{as:momSD} implies that $\E |\gradtheta X_1(\theta)|^2 < \infty$ and $\E |\gradtheta^2 X_1(\theta)| < \infty$ we have
% \begin{equation}\label{eq:boundEg1ki}
% \E |g_{1,k,i}(t,\theta)| \leq c \big( |t|^2 \E |\gradtheta X_1(\theta)|^2 + |t| \E |\gradtheta^2 X_1(\theta)| \big).
% \end{equation}
% It follows now from assumption~\ref{as:ergodXt} and Proposition~2.5 in \cite{Straumann06} that the sequence $(X_j(\theta), \tildeX_j(\theta), \gradtheta  \tildeX_j(\theta),  \gradtheta^2  \tildeX_j(\theta))_{j \in \N}$ is stationary and ergodic. Since $\E |i^\ast_1(t,\theta)| \leq 2$ it follows from \eqref{eq:boundEg1ki} combined with Birkhoff's ergodic theorem that for every fixed $t \in \R^p$, as $\nto$,
% \begin{equation}\label{eq:ingnas}
% i_n^\ast(t,\theta) \stas \E i_1(t,\theta) \quad \text{and} \quad g_{n,k,i}(t,\theta) \stas \E g_{1,k,i}(t,\theta).
% \end{equation}
\textbf{Step 1: Uniform convergence on $\Theta$:} It follows from the iid property of the random elements  $(\tilde\bsX_j(\theta), \theta \in \Theta)_{j \in \N}$ 
%combined with \eqref{as:2diffXt} and Proposition~2.5 in \cite{Straumann06} 
that the sequence \\
$( \tilde\bsX_j(\theta), \gradtheta  \tilde\bsX_j(\theta),  \gradtheta^2  \tilde\bsX_j(\theta), \theta \in \Theta)_{j \in \N}$ is iid. 
Lemma~\ref{lemma} together with \ref{as:momSD} gives the uniform bound
\begin{equation*}
\E \suptheta |g_{1,k,i}(t,\theta)| \leq c \Big( |t|^2 \E \suptheta |\gradtheta X_1(\theta)|^2 + |t| \E \suptheta |\gradtheta^2 X_1(\theta)| \Big)  < \infty,
\end{equation*}
and it follows from the uniform SLLN 
%\CK{in $C(\Theta)$} 
that for every fixed $t \in \R^p$
\begin{equation}\label{eq:convUnif_ingn}
\suptheta |{g_{H,k,i}}(t,\theta) - \E g_{1,k,i}(t,\theta)| \stas 0, \quad \nto.
\end{equation}
%It follows from \ref{as:contXt} that the chf $\vp(t,\theta)$ is continuous on $\Theta$.
Similarly,
\begin{equation}\label{eq:convUnif_i_a}
\suptheta \bigg|  \frac{1}{H} \sum_{j=1}^H \cos(\la t,\bstildeX_j(\theta) \ra) - \Re(\vp(t,\theta)) \bigg|  \stas 0, \quad \nto.
\end{equation}
Because of \ref{as:ergodXt} the ergodic theorem gives
\begin{equation}\label{eq:conv_i_b}
\frac{1}{n} \sum_{j=1}^n \cos(\la t,\bsX_j \ra) \stas \Re(\vp(t,\theta_0)), \quad \nto.
\end{equation}
Therefore, \eqref{eq:convUnif_i_a} combined with \eqref{eq:conv_i_b} and the triangle inequality imply
\begin{equation}\label{eq:convUnif_i}
\suptheta |i_{n,H}(t,\theta) - \E i_{1,1}(t,\theta)| \stas 0, \quad \nto.
\end{equation}
\noindent
\textbf{Step 2: Pointwise convergence of ${i_{n,H}}(t,\theta_n) {g_{H,k,i}}(t,\theta_n)$:}
The triangle inequality implies  % and applying the $\suptheta$ afterwards gives
\begin{equation}\label{eq:boundGadd}
\begin{split}
& |{i_{n,H}}(t,\theta_n){g_{H,k,i}}(t,\theta_n) - \E {i_{1,1}}(\theta_0,t)\E g_{1,k,i}(\theta_0,t)| \\ 
 \leq & | {i_{n,H}}(t,\theta_n){g_{H,k,i}}(t,\theta_n) - \E {i_{1,1}}(t,\theta_n)\E g_{1,k,i}(t,\theta_n)| \\ 
& + |\E {i_{1,1}}(t,\theta_n)\E g_{1,k,i}(t,\theta_n) - \E {i_{1,1}}(\theta_0,t)\E g_{1,k,i}(\theta_0,t)| \\
 \leq & \suptheta \big\{ |{i_{n,H}}(t,\theta){g_{H,k,i}}(t,\theta) - \E {i_{1,1}}(t,\theta)\E g_{1,k,i}(t,\theta)|\big\} \\
& + |\E {i_{1,1}}(t,\theta_n)\E g_{1,k,i}(t,\theta_n) - \E {i_{1,1}}(\theta_0,t)\E g_{1,k,i}(\theta_0,t)|. 
\end{split}
\end{equation}
Since $\theta_n \stas \theta_0$ and the map $\theta \mapsto \E{i_{1,1}}(t,\theta)\E g_{1,k,i}(t,\theta)$ is continuous in $\Theta$, (by \ref{as:2diffXt} and \ref{as:momSD}) it follows that the second term {on the right-hand side of} \eqref{eq:boundGadd} converges a.s. to zero. 
Additionally, since the uniform convergences on \eqref{eq:convUnif_ingn} {and \eqref{eq:convUnif_i} imply} the uniform convergence of the product ${i_{n,H}}(t,\theta){g_{H,k,i}}(t,\theta)$ on $\Theta$ it follows that the first term on the right-hand side of \eqref{eq:boundGadd} also converges a.s. to zero.\\[2mm]
\textbf{Step 3: $L^1$-convergence}: Since we have already shown a.s. convergence, it follows from Theorems~6.25(iii) and 6.19  in \cite{Klenke13Prob} (with $H(x) = |x|^{1+\varepsilon}$) that $L^1$-convergence follows provided that
\begin{equation*}
\sup_{n \in \N} \E |{i_{n,H}}(t,\theta_n) {g_{H,k,i}}(t,\theta_n)|^{1+\eps} < \infty  
\end{equation*}
for some $\eps > 0$. 
Using the fact that $|{i_{n,H}}(t,\theta_n)| \leq 2$ and the inequality $|\frac{1}{n} \sum_{j=1}^n {\beta}_j| ^{1+\eps} \leq \frac{1}{n} \sum_{j=1}^n |{\beta}_j| ^{1+\eps}$, ${\beta_1,\dots,\beta_n \in \R}$, %and the definition of ${g_{H,k,i}}$ in \eqref{eq:defgn}, 
we obtain
\begin{equation}\label{eq:ingnE1ep}
\begin{split}
 &    \E |{i_{n,H}}(t,\theta_n) {g_{H,k,i}}(t,\theta_n)|^{1+\eps} \\
  & \leq 2^{1+\eps}  \E |{g_{H,k,i}}(t,\theta_n)|^{1+\eps} \\
  & \leq {\frac{2^{1+\eps}}{H}} \sum_{j=1}^{{H}} \E \Big|\cos(\la t,\bstildeX_j(\theta_n) \ra) \la t, \diffthetak \bstildeX_j(\theta_n) \ra  \la t, \diffthetai \bstildeX_j(\theta_n) \ra  \\
  & \quad\quad +  \sin(\la t,\bstildeX_j(\theta_n) \ra) \la t, \diffthetaki \bstildeX_j(\theta_n) \ra\Big|^{1 + \eps}\\
  & \leq  {\frac{2^{1+\eps}}{H}} \sum_{j=1}^{{H}} \E \Big| \la t, \diffthetak \bstildeX_j(\theta_n) \ra  \la t, \diffthetai \bstildeX_j(\theta_n) \ra   +  \la t, \diffthetaki \bstildeX_j(\theta_n) \ra\Big|^{1 + \eps},
  \end{split}
\end{equation}
since $|\cos(\cdot)|, |\sin(\cdot)| \leq 1$. 
Now we use the inequality $|a+b|^{1+\eps}\le 2^\eps(|a|^{1+\eps}+|b|^{1+\eps})$ for $a,b\in\R$, assumption \ref{as:momSD} for the uniform bound in Lemma~\ref{lemma} and the fact that the sequence $(\tilde\bsX_j(\theta), \gradtheta  \tilde\bsX_j(\theta),  \gradtheta^2  \tilde\bsX_j(\theta), \theta \in \Theta)_{j \in \N}$ is iid to continue
%Now we use the inequality $|a+b|^{1+\eps}\le 2^\eps(|a|^{1+\eps}+|b|^{1+\eps})$ for $a,b\in\R^q$, apply the Cauchy-Schwarz inequality for the inner product, and use the fact that $(\tilde\bsX_j(\theta), \gradtheta  \tilde\bsX_j(\theta),  \gradtheta^2  \tilde\bsX_j(\theta), \theta \in \Theta)_{j \in \N}$ are iid and \CK{assumption \ref{as:momSD} for the uniform bound in Lemma~\ref{lemma} to continue }
\begin{equation}\label{eq:ingnE1ep}
\begin{split}
  & \leq 2^{1+2\eps} \frac{1}{H} \sum_{j=1}^H \bigg( \E \Big| \la t, \diffthetak \bstildeX_j(\theta_n) \ra  \la t, \diffthetai \bstildeX_j(\theta_n) \ra\Big|^{1 + \eps}
  \\
  & + \E \Big| \la t, \diffthetaki \bstildeX_j(\theta_n) \ra\Big|^{1 + \eps} \bigg) \\
  & \leq c \frac{1}{H} \sum_{j=1}^H \big( |t|^{2(1+\eps)} \E | \gradtheta X_1(\theta_n)|^{2(1+\eps)}  + |t|^{1+\eps} \E  | \gradtheta^2 X_1(\theta_n)|^{1+\eps} \big) \\
  & \leq c \Big( |t|^{2(1+\eps)} \E \suptheta | \gradtheta X_1(\theta)|^{2(1+\eps)} +  |t|^{1+\eps} \E \suptheta | \gradtheta^2 X_1(\theta)|^{1+\eps} \Big) := v(t) < \infty.
\end{split}
\end{equation}
\textbf{Step 4: Convergence of the random integrals:} Define the sequence of functions
\begin{equation*}
v_n(t) = \E |{i_{n,H}}(t,\theta_n) {g_{H,k,i}}(t,\theta_n) - \E{i_{1,1}}(\theta_0,t)\E g_{1,k,i}(\theta_0,t)|, \quad t \in \R^p,
\end{equation*}
and recall that from the $L^1$-convergence showed in Step 3, for every $t \in \R^p$ we have $v_n(t) \rightarrow 0$ as $\nto$. From the definition of the function $v$ in the last line of \eqref{eq:ingnE1ep} it follows that $\sup_{n \in \N} v_n(t) \leq 2 v(t)$. Additionally, assumption \ref{as:intt2Wt} implies that
\begin{equation*}%\label{eq:vtinegrability}
 \intrp v(t) w(t) \diff t < \infty.
\end{equation*}
Therefore, it follows from Fubini's Theorem and dominated convergence that
\beam
 &  &   \E \bigg| \intrp \big( {i_{n,H}}(t,\theta_n){g_{H,k,i}}(t,\theta_n) - \E{i_{1,1}}(\theta_0,t)\E g_{1,k,i}(\theta_0,t) \big) w(t) \diff t \bigg| \nonumber\\
& & \leq  \E \intrp  | {i_{n,H}}(t,\theta_n){g_{H,k,i}}(t,\theta_n) - \E{i_{1,1}}(\theta_0,t)\E g_{1,k,i}(\theta_0,t) | w(t) \diff t  \nonumber\\
& & = \intrp v_n(t) w(t) \diff t \rightarrow 0,\quad \nto \label{eq:conv_integralG},
\eeam
and therefore the convergence in probability of \eqref{eq:ingnint} follows from the $L^1$-convergence in \eqref{eq:conv_integralG}. 

The proofs for the other three remaining integrals on the right-hand side of \eqref{eq:Qn2} follow along the same lines. 
The result in \eqref{eq:diffThCov} is then a consequence of the fact that for all $t \in \R^p$, $\E i_{1,1}(t,\theta_0) = \E k_{1,1}(t,\theta_0) = 0$.
\end{proof}

\noindent
\textbf{Proof of Theorem~\ref{th:asn}}: We handle each term in \eqref{eq:GratQth} separately. As a direct consequence of Theorem~\ref{th:cons} and Lemmas~\ref{le:FuncCon}-\ref{le:ConvDeri}, 
$$
-2 {(\gradtheta^2 Q_{n,H}(\theta_n))^{-1}} \intrp  b_H(t) {\sqrt{n}}g_n(t) w(t) \diff t \std N(0,Q^{-1} W Q^{-1}), \quad \nto,
$$
where $Q = (Q_{k,i})_{k,i=1}^q$ with 
\begin{equation}\label{eq:Qki}
Q_{k,i} = \intrp \Big( \E j_{1,k}(t,\theta_0) \E j_{1,i}(t,\theta_0) + \E l_{1,k}(t,\theta_0) \E l_{1,i}(t,\theta_0) \Big) w(t) \diff t,
\end{equation}
\begin{equation*}
W = \var \bigg( \int_{\R^p} \E[b_1(t)] G(t)  w(t) \diff t\bigg),
\end{equation*}
and $G$ being the $\R^2$-valued Gaussian field from Lemma~\ref{le:FuncCon}. For arbitrary $k,r \in \{1,\dots,q\}$ we have
\beam
W_{k,r} & = & \cov \bigg( \int_{\R^p} \E [b^{(k)}_1(t)]^T G(t) w(t) \diff t, \int_{\R^p} \E [b^{(r)}_1(t)]^T G(t) w(t) \diff t \bigg)  \nonumber \\
& = & \int_{\R^{p}} \int_{\R^{p}} \E [b^{(k)}_1(t)]^T \E [G(t) G(s)^T ]  \E [b^{(k)}_1(s)] w(t) w(s) \diff t \diff s \label{eq:Wkr}.
\eeam
Since $(X_j)_{j \in \N}$ is $\alpha$-mixing by \ref{as:amixXt}, we can apply the CLT in \cite{Ibragimov71} (Theorem~18.5.3 with $\delta = 2/(r-1)$) and find that
\begin{equation}\label{eq:Egtgs}
\E [G(t) G(s)^T] = \E [F_1(t) F_1(s)^T] + 2 \sum_{j=2}^\infty \E [ F_1(t) F_j(s)^T],
\end{equation}
where
\begin{equation}\label{eq:fjt}
 F_j(t) = 
 \begin{pmatrix}
\cos(\la t, \bsX_j \ra) - \Re(\vp(t,\theta_0)) \\ 
  \sin(\la t, \bsX_j \ra) - \Im(\vp(t,\theta_0))
\end{pmatrix}.
\end{equation}
Substituting \eqref{eq:Egtgs} and \eqref{eq:fjt} into \eqref{eq:Wkr} gives with Fubini's Theorem 
\begin{equation*}
\begin{split}
W_{k,r} & = \int_{\R^{p}} \int_{\R^{p}} \bigg\{ \E[ b^{(k)}_1(t)]^T \bigg( \E [F_1(t) F_1(s)^T] + 2 \sum_{j=2}^\infty \E  [F_1(t) F_j(s)^T]  \bigg)  \times \\
& \quad\quad\quad\quad \E [b^{(k)}_1(s)]  w(t) w(s) \bigg\}  \diff t \diff s \\
& = \int_{\R^{p}} \int_{\R^{p}} \E [b^{(k)}_1(t)]^T \E [F_1(t) F_1(s)^T] \E [b^{(k)}_1(s)]  w(t) w(s)  \diff t \diff s \\ 
& \quad\quad + 2 \sum_{j=2}^\infty \int_{\R^{p^2}} \E [b^{(k)}_1(t)]^T \E  [F_1(t) F_j(s)^T]  \E [b^{(k)}_1(s)]  w(t) w(s)  \diff t \diff s \\
& = \E \Big( \int_{\R^p} \E [b^{(k)}_1(t)]^T F_1(t) w(t) \diff t \Big)^2 \\
& \quad\quad +   2 \sum_{j=2}^\infty \E \bigg[ \Big( \int_{\R^p} \E [b^{(k)}_1(t)]^T F_1(t) w(t) \diff t \Big)   \\
& \quad\quad\quad\quad\quad\quad\quad\quad \times \Big( \int_{\R^p} \E [b^{(k)}_1(s)]^T F_j(t) w(s) \diff s \Big) \bigg],
\end{split}
\end{equation*}
which combined with Remark~\ref{re:rev1} gives \eqref{eq:Wdeffor}. By the same arguments of interchanging expectation and differentiation from Remark~\ref{re:rev1} we obtain
$$
\E j_{1,i}(t,\theta) = - \diffthetai \Re (\varphi(t,\theta)) \quad \text{and} \quad \E l_{1,i}(t,\theta) = \diffthetai \Im (\varphi(t,\theta)). 
$$
This together with \eqref{eq:Qki} gives
$$
Q_{k,i} = \intrp \Big( \diffthetak \Re (\varphi(t,\theta_0)) , \diffthetak  \Im (\varphi(t,\theta_0)) \Big)  \Big( \diffthetai \Re (\varphi(t,\theta_0)) , \diffthetai  \Im (\varphi(t,\theta_0)) \Big)^T  w(t) \diff t,
$$
leading to \eqref{eq:rev3}.

The second term in \eqref{eq:GratQth} is, up to a constant, 
\begin{equation*}
\intrp  b_H(t)\tilde{g}_H(t) w(t) \diff t.
\end{equation*}
It follows from the fact that $(\bstildeX_j(\theta_0))_{j \in \N} \eqd (\bsX_j)_{j \in \N}$ combined with \eqref{eq:limsupvz} that
\begin{equation}\label{eq:cheb}
\begin{split}
&  \var \bigg(  \intrp  b_H(t)\sqrt{n} \tilde{g}_H(t) w(t) \diff t \bigg) \\
&  \leq \frac{c}{\bar{H} (n)} \bigg(  \intrp (1 +  |t|^2) w(t) \diff t \bigg)\bigg(  \intrp w(t) \diff t \bigg) =: \frac{c}{\bar{H} (n)} \rightarrow 0,
\end{split}
\end{equation}
as $\nto$. Thus \eqref{eq:anthe} follows from Chebyshev's inequality.

%\newpage
%\lhead[\footnotesize\thepage\fancyplain{}\leftmark]{}\rhead[]{\fancyplain{}\rightmark\footnotesize\thepage}

\subsection{Finite sample behavior of the estimators}\label{se:tables}

\subsubsection{{ARFIMA models driven by noise from Gaussian, Laplace, and Student-\texorpdfstring{$t$}{Lg} distributions}}

\begin{table}[H]
\centerline{{ARFIMA model driven by standard Gaussian noise} }
\centering
\def\arraystretch{0.8}
\begin{tabular}{lrrcrrcrrc}
  \hline
  & \multicolumn{3}{c}{$d=0.05$} & \multicolumn{3}{c}{$d=0.10$} & \multicolumn{3}{c}{$d=0.15$} \\ 
  & Bias & Std & RMSE & Bias & Std & RMSE & Bias & Std & RMSE \\ 
%H=500 & -0.029 & 0.060 & 0.067 & -0.028 & 0.065 & 0.071 & -0.019 & 0.059 & 0.062 \\ 
  $\thehat$ & 0.000 & 0.056 & 0.056 & 0.002 & 0.054 & 0.054 & 0.004 & 0.049 & 0.049 \\ 
  $\hat\theta_n$ & -0.005 & 0.050 & 0.050 & -0.004 & 0.047 & 0.047 & -0.004 & 0.044 & 0.045 \\ 
  MLE & -0.015 & 0.040 & {0.043} & -0.015 & 0.040 & {0.043} & -0.016 & 0.040 & {0.043} \\   \\
& \multicolumn{3}{c}{$d=0.20$} & \multicolumn{3}{c}{$d=0.25$} & \multicolumn{3}{c}{$d=0.30$} \\   
  & Bias & Std & RMSE & Bias & Std & RMSE & Bias & Std & RMSE \\ 
%  H=500 & -0.014 & 0.053 & 0.055 & -0.014 & 0.047 & 0.049 & -0.015 & 0.052 & 0.054 \\ 
 $\thehat$ & 0.003 & 0.047 & 0.047 & 0.000 & 0.046 & 0.046 & -0.003 & 0.048 & 0.048 \\ 
$\hat\theta_n$ & -0.004 & 0.045 & 0.045 & -0.006 & 0.044 & 0.044 & -0.007 & 0.046 & 0.047 \\ 
  MLE & -0.016 & 0.040 & {0.043} & -0.017 & 0.039 & {0.043} & -0.017 & 0.039 & {0.043} \\  \\
& \multicolumn{3}{c}{$d=0.35$} & \multicolumn{3}{c}{$d=0.40$} & \multicolumn{3}{c}{$d=0.45$} \\   
  & Bias & Std & RMSE & Bias & Std & RMSE & Bias & Std & RMSE \\ 
  %H=500 & -0.013 & 0.057 & 0.058 & -0.017 & 0.057 & 0.059 & -0.022 & 0.051 & 0.055 \\ 
   $\thehat$ & -0.006 & 0.050 & 0.051 & -0.013 & 0.051 & 0.052 & -0.022 & 0.047 & 0.052 \\ 
   $\hat\theta_n$ & -0.009 & 0.049 & 0.050 & -0.013 & 0.051 & 0.052 & -0.021 & 0.048 & 0.052 \\ 
  MLE & -0.019 & 0.039 & {0.043} & -0.021 & 0.037 & {0.043} & -0.027 & 0.034 & {0.043} \\ 
   \hline
\end{tabular}
\caption{Comparison of the simulation based estimator $\thehat$ for $H=3\,000$, the oracle estimator $\hat\theta_n$, and the MLE for sample size $n = 400$. 
For all estimators we have taken $p = 3$ with $w$ the Gaussian density as in \eqref{eq:wtGau}.
Reported results are based on 500 replications. 
%\ts{For each setting, the smallest RMSEs are shaded.} \CK{take them out here.}
}\label{tb:arfimaGa}
\end{table}

\newpage

\begin{table}[H]
\centerline{{ARFIMA model driven by standard Laplace noise} }
\centering
\begin{tabular}{lrrcrrcrrc}
  \hline
    & \multicolumn{3}{c}{$d=0.05$} & \multicolumn{3}{c}{$d=0.10$} & \multicolumn{3}{c}{$d=0.15$} \\ 
 & Bias & Std & RMSE & Bias & Std & RMSE & Bias & Std & RMSE \\ 
  $\thehat$ & -0.004 & 0.062 & 0.062 & -0.003 & 0.060 & 0.060 & 0.004 & 0.054 & 0.054 \\ 
  $\hat\theta_n$ & -0.005 & 0.051 & 0.051 & -0.003 & 0.049 & 0.049 & 0.001 & 0.048 & 0.047 \\ 
  {QMLE}  & -0.012 & 0.043 & {0.045} & -0.013 & 0.043 & {0.045} & -0.013 & 0.043 & {0.045} \\  \\
& \multicolumn{3}{c}{$d=0.20$} & \multicolumn{3}{c}{$d=0.25$} & \multicolumn{3}{c}{$d=0.30$} \\   
  $\thehat$ & 0.008 & 0.049 & 0.050 & 0.012 & 0.051 & 0.053 & 0.012 & 0.049 & 0.051 \\ 
  $\hat\theta_n$ & 0.005 & 0.047 & 0.047 & 0.009 & 0.046 & 0.047 & 0.010 & 0.046 & 0.047 \\ 
   QMLE & -0.014 & 0.042 & {0.044} & -0.014 & 0.042 & {0.044} & -0.015 & 0.042 & {0.044} \\  \\
& \multicolumn{3}{c}{$d=0.35$} & \multicolumn{3}{c}{$d=0.40$} & \multicolumn{3}{c}{$d=0.45$} \\   
  $\thehat$ & 0.009 & 0.045 & 0.046 & -0.004 & 0.042 & 0.042 & -0.022 & 0.037 & 0.043 \\ 
  $\hat\theta_n$ & 0.006 & 0.044 & {0.044} & -0.004 & 0.040 & {0.040} & -0.023 & 0.035 & {0.042}  \\ 
  QMLE & -0.016 & 0.041 & {0.044} & -0.019 & 0.039 & 0.044 & -0.025 & 0.035 & 0.043 \\ 
   \hline
\end{tabular}
\caption{Comparison of the simulation based estimator $\thehat$ for $H=3\,000$, {the quasi-oracle estimator $\hat\theta_n$,} and the QMLE 
%\ts{where the ARFIMA model has innovations following a standard (mean zero and variance one) Laplace distribution}
for sample size $n = 400$.
For all estimators we have taken $p = 3$ with $w$ the Gaussian density as in \eqref{eq:wtGau}.
Reported are results based on 500 replications. 
%For each setting, the smallest RMSEs are shaded. \CK{take them out here.}
}\label{tb:arfimadExp}
\end{table}

% latex table generated in R 3.4.4 by xtable 1.8-4 package
% Fri Dec 27 16:46:59 2019
\begin{table}[H]
\centerline{{ARFIMA model driven by standard Student-$t$ noise with 6 degrees of freedom} }
\centering
\begin{tabular}{lrrcrrcrrc}
  \hline
 & Bias & Std & RMSE & Bias & Std & RMSE & Bias & Std & RMSE \\ 
    & \multicolumn{3}{c}{$d=0.05$} & \multicolumn{3}{c}{$d=0.10$} & \multicolumn{3}{c}{$d=0.15$} \\ 
  $\thehat$ & -0.002 & 0.063 & 0.063 & 0.005 & 0.059 & 0.060 & 0.008 & 0.053 & 0.054 \\ 
  $\hat\theta_n$ & -0.002 & 0.052 & 0.052 & -0.001 & 0.050 & 0.050 & 0.000 & 0.048 & 0.048 \\ 
  {QMLE} & -0.012 & 0.039 & {0.041} & -0.012 & 0.039 & {0.041} & -0.013 & 0.039 & {0.041} \\ \\
& \multicolumn{3}{c}{$d=0.20$} & \multicolumn{3}{c}{$d=0.25$} & \multicolumn{3}{c}{$d=0.30$} \\   
   $\thehat$ & 0.008 & 0.049 & 0.050 & 0.004 & 0.050 & 0.050 & 0.008 & 0.050 & 0.050 \\ 
  $\hat\theta_n$ & 0.001 & 0.047 & 0.047 & 0.002 & 0.047 & 0.047 & 0.001 & 0.047 & 0.047 \\ 
  QMLE & -0.013 & 0.039 & {0.041} & -0.014 & 0.039 & {0.041} & -0.014 & 0.039 & {0.041} \\  \\
& \multicolumn{3}{c}{$d=0.35$} & \multicolumn{3}{c}{$d=0.40$} & \multicolumn{3}{c}{$d=0.45$} \\   
   $\thehat$ & 0.002 & 0.049 & 0.049 & -0.009 & 0.043 & 0.044 & -0.029 & 0.038 & 0.048 \\ 
  $\hat\theta_n$ & -0.004 & 0.046 & 0.046 & -0.014 & 0.043 & 0.045 & -0.031 & 0.038 & 0.049 \\ 
  QMLE & -0.016 & 0.038 & {0.041} & -0.018 & 0.037 & {0.041} & -0.024 & 0.033 & {0.041} \\ 
   \hline
\end{tabular}
\caption{Comparison of the simulation based estimator $\thehat$ for $H=3\,000$, {the quasi-oracle estimator $\hat\theta_n$,} and the QMLE 
%\ts{where the ARFIMA model has innovations following a standard Student-$t$ distribution with $6$ degrees of freedom}  
for sample size $n = 400$. 
For all estimators we have taken $p = 3$ with $w$ the Gaussian density as in \eqref{eq:wtGau}.
Reported results are based on 500 replications.
%For each setting, the smallest RMSEs are shaded. \CK{take them out here.}
}\label{tb:arfimadStudent}
\end{table}

\newpage

\subsubsection{Poisson-AR model}

% latex table generated in R 3.4.4 by xtable 1.8-2 package
% Sat May 26 15:41:38 2018
\begin{table}[ht]%[!htbp]
\centerline{{Poisson-AR(1) model} }
\centering
\begin{small}
\setlength{\tabcolsep}{5pt}
\def\arraystretch{0.7}
\begin{tabular}{l|ccccccccccc}
  \hline
 & $\beta$ & $\phi$ & $\sigma$ &$\beta$ & $\phi$ & $\sigma$ & $\beta$ & $\phi$ & $\sigma$ \\ 
  \hline
  \multicolumn{10}{c}{$D=10$} \\
TRUE & -0.613 & -0.500 & 1.236 & -0.613 & 0.500 & 1.236 & -0.613 & 0.900 & 0.622 \\ 
% CF1-bias & -0.011 & -0.028 & 0.082 & 0.035 & -0.157 & 0.054 & 0.179 & -0.083 & 0.140 \\ 
%   CF1-rmse & 0.105 & 0.119 & 0.167 & 0.169 & 0.232 & 0.176 & 0.371 & 0.120 & 0.199 \\ 
%  CF2-bias & -0.002 & 0.009 & 0.006 & -0.019 & -0.018 & -0.022 & -0.062 & -0.022 & 0.004 \\ 
%   CF2-rmse & 0.098 & 0.113 & 0.122 & 0.144 & 0.113 & 0.126 & 0.281 & 0.056 & 0.133 \\ 
 Bias($\thehat$) & -0.015 & 0.025 & 0.002 & -0.012 & 0.014 & -0.032 & -0.016 & -0.010 & 0.002 \\ 
  RMSE($\thehat$) & \colorbox{lg}{0.096} & \colorbox{lg}{0.101} & \colorbox{lg}{0.119} & 0.148 & 0.107 & 0.120 & 0.298 & 0.054 & \colorbox{lg}{0.128} \\
  Bias($\thehatcv$) & 0.023 & 0.031 & -0.007 & 0.006 & 0.002 & -0.018 & 0.061 & -0.007 & -0.036 \\ 
  RMSE($\thehatcv$) &  0.102 & 0.129  & 0.122  & \colorbox{lg}{0.138} &  \colorbox{lg}{0.098} &  \colorbox{lg}{0.098} & \colorbox{lg}{0.285} &  \colorbox{lg}{0.049} &  0.132 \\  
  \multicolumn{10}{c}{$D=1$} \\
  TRUE  & 0.150 & -0.500 & 0.619 & 0.150 & 0.500 & 0.619 & 0.150 & 0.900 & 0.312 \\ 
%     CF1-bias & -0.008 & 0.022 & 0.065 & -0.033 & -0.264 & 0.094 & 0.037 & -0.166 & 0.153 \\ 
%   CF1-rmse & 0.060 & 0.119 & 0.099 & 0.083 & 0.306 & 0.123 & 0.169 & 0.223 & 0.184 \\ 
%   CF2-bias & -0.004 & -0.020 & -0.006 & -0.008 & -0.050 & -0.003 & -0.029 & -0.053 & 0.030 \\ 
%   CF2-rmse & 0.060 & 0.131 & 0.083 & 0.075 & 0.164 & 0.091 & 0.147 & 0.099 & 0.098 \\ 
  Bias($\thehat$) & -0.004 & 0.024 & -0.016 & -0.006 & 0.005 & -0.023 & -0.016 & -0.033 & 0.028 \\ 
  RMSE($\thehat$)  & 0.057 & 0.144 & 0.088 & 0.074 & 0.141 & 0.081 & 0.147 & 0.084 & 0.095 \\
  Bias($\thehatcv$) & 0.003 & -0.011 & -0.017 & 0.001 & 0.023 & -0.019 & 0.003 & -0.009 & -0.012 \\ 
  RMSE($\thehatcv$)  & \colorbox{lg}{0.055} &  \colorbox{lg}{0.124} &  \colorbox{lg}{0.085} & \colorbox{lg}{0.071} & \colorbox{lg}{0.102} &  \colorbox{lg}{0.069} & \colorbox{lg}{0.145} &  \colorbox{lg}{0.062} &  \colorbox{lg}{0.087} \\ 
  \multicolumn{10}{c}{$D=0.1$} \\
  TRUE  & 0.373 & -0.500 & 0.220 & 0.373 & 0.500 & 0.220 & 0.373 & 0.900 & 0.111 \\ 
%   CF1-bias & -0.057 & 0.193 & 0.119 & -0.045 & -0.409 & 0.113 & -0.039 & -0.669 & 0.197 \\ 
%   CF1-rmse & 0.072 & 0.374 & 0.146 & 0.066 & 0.518 & 0.142 & 0.076 & 0.754 & 0.218 \\ 
%   CF2-bias & 0.006 & 0.061 & -0.024 & 0.000 & -0.332 & -0.023 & -0.004 & -0.528 & 0.058 \\ 
%   CF2-rmse & 0.044 & 0.452 & 0.100 & 0.045 & 0.620 & 0.106 & 0.062 & 0.743 & 0.110 \\ 
  Bias($\thehat$) & -0.011 & 0.032 & -0.045 & -0.015 & -0.322 & -0.036 & -0.019 & -0.517 & 0.044 \\ 
  RMSE($\thehat$)  & 0.043 & \colorbox{lg}{0.408} & \colorbox{lg}{0.098} & 0.047 & 0.657 & \colorbox{lg}{0.102} & 0.066 & 0.801 & 0.099 \\
  Bias($\thehatcv$) &  -0.002 & 0.056 & -0.044 & -0.003 & -0.120 & -0.038 & -0.004 & -0.310 & 0.031 \\ 
  RMSE($\thehatcv$)  & \colorbox{lg}{0.042} & 0.482 & 0.112 & \colorbox{lg}{0.045} & \colorbox{lg}{0.504} & 0.108 & \colorbox{lg}{0.062} &  \colorbox{lg}{0.555} & \colorbox{lg}{0.090} \\ 
   \hline
\end{tabular}
\end{small}
\caption{{Comparison of the simulation based estimator $\thehat$ of \eqref{eq:defThe} and the control variates based estimator $\thehatcv$ of \eqref{eq:thecv} with $k=1$ for sample size $n = 400$. For all estimators we have taken $H=3\,000$, $p = 3$ with $w$ the Laplace density as in \eqref{eq:wtLap}. Reported results are based on $500$ replications. The models are classified by the index of dispersion $D=e^{\beta + \alpha_1}$. For each setting, the smallest RMSEs are shaded.}}\label{tb:pam}
\end{table}

%%%%%%%%%%%%%%%%%%%%%%%%%
%
% ACKNOWLEDGMENTS
%
%%%%%%%%%%%%%%%%%%%%%%%%%
{\bf Data sharing:}  Data sharing is not applicable to this article as no datasets were analyzed or used in this study.
\subsubsection*{Acknowledgement}
Thiago do R\^ego Sousa gratefully acknowledges support from the National Council for Scientific and Technological Development (CNPq - Brazil) and the TUM Graduate School. He also thanks the Statistics Department at Columbia University for its hospitality during his visit and takes pleasure to thank Viet Son Pham and Thibaut Vatter for helpful discussions.  Davis' research was partially supported by NSF grant DMS 2015379 to Columbia University.

%%%%%%%%%%%%%%%%%%%%%%%%%
%
% BIBLIOGRAPHY
%
%%%%%%%%%%%%%%%%%%%%%%%%%

%\renewcommand{\baselinestretch}{0.8}

%\begin{small}
\bibliographystyle{abbrvnat} 
\bibliography{references}       
%\end{small}
       
\end{document}